\documentclass[10pt,reqno]{amsart}
\usepackage{natbib}
\usepackage{amsaddr}
\usepackage{etoolbox}
\patchcmd{\section}{\scshape}{\bfseries\scshape}{}{}
\makeatletter
\renewcommand{\@secnumfont}{\bfseries}
\makeatother

\usepackage[utf8]{inputenc} 
\usepackage[T1]{fontenc}    
\usepackage{hyperref}       
\usepackage{url}            
\usepackage{booktabs}       
\usepackage{amsfonts}       
\usepackage{nicefrac}       
\usepackage{microtype}      
\usepackage[a4paper,margin=3cm]{geometry}

\usepackage{algorithm}
\usepackage{algorithmic}

\usepackage{amsmath}
\usepackage{color}
\usepackage{enumitem}
\usepackage{amssymb}
\usepackage{amsthm}
\usepackage{graphicx}
\usepackage{dsfont}

\usepackage{subfigure}

\renewcommand{\leq}{\leqslant}
\renewcommand{\geq}{\geqslant}

\renewcommand{\ge}{\geqslant}

\def\<{\langle}
\def\>{\rangle}
\def\|{\Vert}

\definecolor{NavyBlue}{rgb}{0.1,0.1,0.6}

\def\eps{\varepsilon}

\newcommand{\esp}[1]{\mathbb{E}\left[#1\right]}

\newcommand{\NRM}[1]{{{\left\| #1\right\|}}} 
\newcommand{\set}[1]{{{\left\{ #1\right\}}}} 

\newcommand{\R}{\mathbb{R}}
\newcommand{\D}{\mathbb{D}}
\renewcommand{\P}{\mathbb{P}}
\newcommand{\cB}{\mathcal{B}}
\newcommand{\cO}{\mathcal{O}}
\newcommand{\cN}{\mathcal{N}}
\newcommand{\cX}{\mathcal{X}}
\newcommand{\cW}{\mathcal{W}}
\newcommand{\cT}{\mathcal{T}}
\newcommand{\cM}{\mathcal{M}}
\newcommand{\cV}{\mathcal{V}}
\newcommand{\cG}{\mathcal{G}}
\newcommand{\M}{\mathbb{M}}

\newcommand{\dd}{{\rm d}}

\newcommand{\cP}{\mathcal{P}}
\newcommand{\cH}{\mathcal{H}}

\newcommand{\cI}{\mathcal{I}}
\newcommand{\N}{\mathbb{N}}
\newcommand{\E}{\mathbb{E}}
\newcommand{\cD}{\mathcal{D}}
\newcommand{\one}{\mathds{1}}

\newcommand{\cL}{\mathcal{L}}


\newcommand{\interior}{{ \rm int}\ }

\hypersetup{colorlinks=true, urlcolor= blue, linkcolor=blue, citecolor=blue}

\usepackage{amsfonts}
\usepackage{amsthm}
\usepackage{amsmath}
\usepackage{amssymb}
\usepackage{mathrsfs}
\usepackage{amscd}
\usepackage{caption}
\usepackage{indentfirst}
\usepackage{cleveref}





\newtheorem{definition}{Definition}
\newtheorem{theorem}{Theorem}
\newtheorem{proposition}{Proposition}
\newtheorem{cor}{Corollary}

\newtheorem{lemma}{Lemma}

\makeatletter
\g@addto@macro{\endabstract}{\@setabstract}
\newcommand{\authorfootnotes}{\renewcommand\thefootnote{\@fnsymbol\c@footnote}}%
\makeatother


\begin{document}
\begin{center}
	\LARGE 
	Sample Optimality and \emph{All-for-all} Strategies in Personalized Federated and Collaborative Learning \par \bigskip
	\normalsize
	\authorfootnotes
	Mathieu Even\textsuperscript{1},
	Laurent Massoulié\textsuperscript{1,2} and
	Kevin Scaman\textsuperscript{1} \par \bigskip
	
	\textsuperscript{1}Inria - Département d’informatique de l’ENS \\
	\smallskip \par
	\textsuperscript{2}MSR-Inria Joint Centre \\
		\par \bigskip
\end{center}

\begin{abstract}
	In personalized Federated Learning, each member of a potentially large set of agents aims to train a model minimizing its loss function averaged over its local data distribution. 
	We study this problem under the lens of stochastic optimization.
	Specifically, we introduce information-theoretic lower bounds on the number of samples required from all agents to approximately minimize the generalization error of a fixed agent.
    We then provide strategies matching these lower bounds,
    in the \emph{all-for-one} and \emph{all-for-all} settings where respectively one or all agents desire to minimize their own local function. Our strategies are based on a \emph{gradient filtering} approach: provided prior knowledge on some notions of distances or discrepancies between local data distributions or functions, a given agent filters and aggregates stochastic gradients received from other agents, in order to achieve an optimal bias-variance trade-off.
\end{abstract}\vspace{-10pt}

\section{Introduction}
A central task in Federated Learning \citep{mcmahan2017fl,kairouz_advances_2019} is the training of a common model from local data sets held by individual agents. A typical application is when users (\emph{e.g.}~mobile phones, hospitals) want to make predictions (\emph{e.g.}~next-word prediction, treatment prescriptions), but each has access to very few data samples, hence the need for collaboration.
As highlighted by many recent works (\emph{e.g.}~\citet{hanzely_lower_2020,mansour_three_2020}), while training a global model yields better statistical efficiency on the combined datasets of all agents by increasing the number of samples linearly in the number of agents, this approach can suffer from a dramatically poor generalization error on local datasets.
A solution to this generalization issue is the training of \emph{personalized} models, a midway between a shared model between agents and models trained locally without any coordination.

An ideal approach would take the best of  both worlds: increased statistical efficiency by using more samples, while keeping local generalization errors low. This raises  the fundamental question: what is the optimal bias/variance tradeoff between personalization and coordination, and how can it be achieved?

We formulate the personalized federated learning problem as follows, studying it under the lens of stochastic optimization~\citep{Bottou2018}. 
Consider $N\in\N^*$ agents denoted by integers $1\leq i\leq N$, each desiring to minimize its own local function $f_i:\R^d\to\R$, while sharing their stochastic gradients.
Since only a limited number of samples are locally available, we focus on \emph{stochastic gradient descent}-like algorithms, where agents each sequentially compute  stochastic gradients $g_i^k$ such that $\esp{g_i^k}=\nabla f_i$. 
In order to reduce the sample complexity, \emph{i.e.}~the number of samples or stochastic gradients required to reach small generalization error, agents thus need to use stochastic gradients from other agents, that are \emph{biased} since in general $\esp{g_i^k}\ne \nabla f_j$.
We  first consider the \emph{all-for-one} objective, where a single agent $i$ wants to minimize its local function (local generalization error), using its own information as well as information from agents $j\ne i$. We then address the \emph{all-for-all} objective where  all agents want to minimize their local function (local generalization error) in parallel using shared information. Our algorithms are based on a \emph{gradient filtering} approach: in both \emph{all-for-one} and \emph{all-for-all} objectives, upon reception of stochastic gradients $(g_j^k)_{j}$, agent $i$ \emph{filters} these gradients and aggregates them using some weights $\lambda_j$ into $\sum_j \lambda_j g_j^k$, in order to achieve some bias/variance trade-off.
\smallskip

\subsection*{Contributions and outline of the paper}
In this paper, we consider oracle models where at each step $k=1,2,\dots$, one (or all) agent(s) may draw a sample according to its (their) local distribution. We aim at computing the number of stochastic gradients sampled from all agents, required to reach a small generalization error in both \emph{all-for-one} and \emph{all-for-all} formulations, in terms of: biases (distances between functions or distributions), regularity  and noise assumptions. The oracle models,  main assumptions and problem formulations are given in Section~\ref{sec:hyp}. Our main contributions are then as follows:

\textbf{\emph{(i)}} In Section~\ref{sec:lower} we prove \emph{information theoretic} lower bounds: to reach a target generalization error $\eps>0$ for a fixed agent $i$, no algorithm can achieve a reduction in the  number of oracle calls by a factor larger than the total number of agents $\eps$-close --in a suitable sense-- to agent $i$.

\textbf{\emph{(ii)}} We next study a weighted gradient averaging algorithm for the \emph{all-for-one} problem, matching this lower bound.

\textbf{\emph{(iii)}} We then propose in Section~\ref{sec:afa} a parallel extension of the simple weighted gradient averaging algorithm that yields an efficient algorithm for  the \emph{all-for-all} problem. In this algorithm, agents compute stochastic gradients at \emph{their} local estimate, and broadcast it to other agents who may use these to update their own estimates. For $x^k=(x_1^k,\ldots,x_N^k)$ where $x_i^k$ is the local estimate of agent $i$ at iteration $k$, updates of the \emph{all-for-all} algorithm write as:
\begin{equation*}
    x^{k+1}=x^k-\eta W g^k\,,
\end{equation*}
where $g^k=(g_1^k,...,g_N^k)$ for an unbiased stochastic gradient~$g_i^k$ of function $f_i$, a step size $\eta$, and a carefully chosen matrix~$W$.
Agents $i$ thus use stochastic gradients that are doubly biased, as gradients of a ``wrong function'' $f_j$ instead of $f_i$ computed at a ``wrong location'' $x^k_j$ instead of $x^k_i$.

\subsection*{Related works}

\emph{Federated Learning} is a paradigm in machine learning where training is done collaboratively among several agents, taking into account privacy constraints \citep{mcmahan2017fl,konecny_federated_2016,kairouz_advances_2019,wang_federated_2019}. A central task is the training of a common model for all agents, for which both \emph{centralized} approaches orchestrated by a server and \emph{decentralized} approaches with no central coordinator \citep{nedic_network_2018} have been considered. The algorithms we propose in this paper are well suited for a  decentralized implementation.

As  observed in~\citet{hanzely_lower_2020}, training a common model for all users can lead to poor generalization on certain tasks such as e.g. next-word prediction. To improve both accuracy and fairness, \emph{personalized} models thus need to be learnt for each agent \citep{li_fair_2020,mohri_agnostic_2019,yu_salvaging_2021}. Approaches to personalization include fine-tuning \citep{cheng_fine-tuning_2021,khodak_adaptive_2019}, transfer learning techniques \citep{tripuraneni_theory_2020,wang_federated_2019}.
\citet{hanzely_lower_2020,fallah_personalized_2020} among others formulate personalization in FL as the  training of local models with a regularization term that enforces collaboration between users. 
We refer the interested reader to \citet{kulkarni_survey_2020} for a broader survey of Personalized Federated Learning.

While the goal of personalization is to minimize local generalization errors, the above cited works do not provide theoretical guarantees over the sample complexity to obtain small local errors, but instead control errors on a regularized problem, in terms of communication rounds or full gradients used, and not in terms of samples used. \citet{deng_adaptive_2020,mansour_three_2020} among others provide generalization errors under a statistical learning framework that depend on VC-dimensions and on distances between each local data distribution and the mixture of all datasets.
\citet{donahue_model-sharing_2020,donahue_optimality_2021} study the bias-variance trade-off between collaboration and personalization for mean estimation in a game-theoretic framework. \citet{chayti2021linear,grimberg_optimal_2021,beaussart_waffle_2021} frame personalization as a stochastic optimization problem with biased gradients and are the works closest to ours. They consider the training of a single agent with biased gradients from another group of agents, i.e. the \emph{all-for-one} problem and obtain performance guarantees in terms of distance between individual function $f_i$ and the average $N^{-1}\sum_j f_j$. In contrast, we obtain more general performance bounds based on distance bounds between all pairs of functions $f_i$, $f_j$ (or equivalently, pairs of local distributions). 
In addition, we prove matching lower bounds. Finally, we also consider the \emph{all-for-all} problem, for which we obtain efficient algorithms, and leverage in that setting the use of weaker bias assumptions than in the \emph{all-for-one} problem.

\section{Problem Statement and Assumptions \label{sec:hyp}}

We now detail our objectives and the necessary technical assumptions. We consider general stochastic gradient methods and formulate our problems, assumptions and algorithms accordingly. Our lower bounds apply to a more specific problem, namely generalization error minimization, or GEM, where functions $f_j$ of all agents $j$ are all obtained from the same loss function $\ell$. These lower bounds a fortiori apply to the more general stochastic gradient setup of our algorithms.

\subsection*{Stochastic (sub)-gradients}\vspace{-3pt}
Let $f_i:\R^d\to\R$, $1\leq i\leq N$ be agent $i$'s local function to be minimized (\emph{e.g.}~the average of a loss function over its local data distribution). At every iteration $k=1,2,\ldots$, agent $i$ may access unbiased \emph{i.i.d.}~estimates $g_i^k(x)$ of $\nabla f_i(x)$ (or of some subgradient $s(x)\in\partial f_i(x)$, depending on the regularity assumptions made):
\begin{equation*}
    \esp{g_i^k(x)} \in \partial f_i(x)\,,\quad x\in\R^d\,,\,k\geq0\,,\,1\leq i\leq N\,.
\end{equation*}
We consider two different objectives. In the \emph{all-for-one} setting, using information from all agents, a fixed agent $i$ desires to minimize its own local function $f_i$ (typically, in collaborative GEM, $f_i$ is the generalization error $f_i(x)=\E_{\cD_i}[\ell(x,\xi_i)]$). In the \emph{all-for-all} setting, all agents want to minimize their own local function, using shared information.

\subsection*{Oracle models}
To specify the information shared between agents via access to stochastic gradients,
we define the following oracles. The \emph{synchronous oracle} (\emph{resp.}~\emph{asynchronous oracle}) lets at every iteration all agents (\emph{resp.}~only one agent) sample a stochastic gradient. After $K$ queries from the synchronous oracle (\emph{resp.}~asynchronous oracle), each agent will have sampled $K$ (\emph{resp.}~$K/N$ on average) stochastic gradients for a total of $NK$ (\emph{resp.}~$K$) in the whole set of agents. In the \emph{all-for-one} setting where a fixed agent $i$ desires to minimize its own local function $f_i$, all agents $j$ can take the same $x_j^k=x_{i}^k$ as argument of their stochastic gradient, resulting in a simple stochastic gradient descent with biased gradients. The difficulty in analyzing \emph{all-for-all} algorithms lies in the fact that stochastic gradients sampled by agent $j$ are doubly biased for agent $i$, being potentially computed at some $x_j^k$ distinct from $x_i^k$.
\begin{center}
\fbox{
\begin{minipage}{10cm}
\begin{center}{\textbf{Synchronous and asynchronous oracles}}\end{center}

At iterations $k=1,2,...$:
\begin{description}
    \item[1] A set of agents $S^k$ is chosen: $S^k=\set{1,\ldots,N}$ for the \emph{synchronous oracle}, $S^k=\set{i_k}$ where $i_k$ is sampled uniformly at random amongst agents, for the \emph{asynchronous oracle}. ;
    \item[2] For all $j\in S^k$, agent $j$ chooses some $y_j^k$ as a (possibly random) function of all previous stochastic gradients and iterates, and samples $g_j^k(y_j^k)$;
    \item[3] All agents can then perform an update using these new stochastic gradients and all previous ones. 
\end{description}
\end{minipage}
}
\end{center}
For fixed target precision $\eps>0$, the objective is to find, using $T_\eps$ samples from all agents in total, queried with synchronous or asynchronous oracles, models with local generalization error $\eps$. We prove matching lower and upper bounds on the \emph{sample complexity} $T_\eps$ in the \emph{all-for-one} and \emph{all-for-all} settings.

\subsection*{Regularity, Bias and Noise Assumptions}
Throughout the paper, we assume that each function $f_i$ is minimized over $\R^d$, and we denote by $x_i^\star$ such a minimizer.

\textbf{\emph{Bias assumptions on function discrepancies.}} For some non-negative weights $(b_{ij})_{1\leq i,j\leq N}\in{\R^+}^{N\times N}$, $(\tilde{b}_{ij})_{1\leq i,j\leq N}\in{\R^+}^{N\times N}$ and $m\ge 0$, either one of the two following assumptions will be made. Note that if differentiable, we have $\esp{g_i^k(x)}=\nabla f_i(x)$.
\begin{enumerate}[label=B.\arabic*]
    \item \label{hyp:bias1} For all $1\leq i,j\leq N$,  optimal model $x_j^\star$ for $j$ generalizes (with generalization error $b_{ij}$) for agent $i$:
    \begin{equation}
        f_i(x_j^\star)-f_i(x_i^\star)\leq b_{ij}\,.\label{eq:hyp:bias1}
    \end{equation}
    \item \label{hyp:bias2} For all $1\leq i,j\leq N$ and for all $x\in\R^d$, $k\geq 0$: 
    \begin{equation}
        \NRM{\esp{g_i^k(x)}-\esp{g_j^k(x)}}^2\leq  \tilde{b}_{ij}\,.\label{eq:hyp:bias2}
    \end{equation}    
\end{enumerate}

\textbf{\emph{Two sets of regularity and noise assumptions.}} 
The two different noise and regularity assumptions we shall consider are  as follows.
\begin{enumerate}[label=N.\arabic*]
    \item \label{hyp:noise2} For all $1\leq i\leq N$, $f_i$ is convex and for some $B>0$, all $k\geq1$ and $x\in\R^d$, 
    \begin{equation}
        \esp{\NRM{g_i^k(x)}^2}\leq B^2\,.\label{eq:hyp:noise2}
    \end{equation}
    \item \label{hyp:noise1} For all $1\leq i\leq N$, $f_i$ $\mu$-strongly convex, $L$-smooth, and we write $\kappa=\frac{L}{\mu}$. For $\sigma>0$, all  $k\geq1$ and $x\in\R^d$:
    \begin{equation}
        \esp{\NRM{g_i^k(x)-\nabla f_i(x)}^2}\leq \sigma^2\,.\label{eq:hyp:noise1}
    \end{equation}
\end{enumerate}
Our regularity assumptions are standard\footnote{In fact our approach could be extended to other regularity assumptions, \emph{e.g.} functions $f_i$ with~$\mu=0$ as in Appendix~\ref{app:afa_convex}, non-convex, or satisfying a Polyak-Lojasiewicz inequality; we do not pursue this here for lack of space. We also generalize \eqref{hyp:bias2} to function dissimilarities such as those in \citet{karimireddy_scaffold_2020}, in Appendix~\ref{app:afo}, \eqref{hyp:bias3}.};
our bias (and gradient dissimilarity) assumptions are less classical, and can be derived from Wassertstein-like distribution-based distances in the GEM setting (Section~\ref{sec:gem} and Appendix~\ref{sec:discuss}).
Assumption~\eqref{hyp:bias2} will be used in the \emph{all-for-one} setting, while the less restrictive~\eqref{hyp:bias1} will be used in the \emph{all-for-all} setting. It quantifies by how much agents' objectives differ. To the best of our knowledge it has not been previously used in Federated Learning.
It is related to \eqref{hyp:bias2} through 
$f_i(x_j^\star)-f_i(x_i^\star)\leq \frac{1}{2\mu}\NRM{\nabla f_i(x_j^\star)-\nabla f_i(x_i^\star)}^2=\frac{1}{2\mu}\NRM{\nabla f_i(x_j^\star)}^2$ under $\mu$-PL or strong convexity assumptions. More generally, we always have $f_i(x_j^\star)-f_i(x_i^\star)\leq \NRM{x_i^\star-x_j^\star}\NRM{\nabla f_i(x_j^\star)}$.
Our lower bounds apply to functions that verify both these bias assumptions, and \emph{a fortiori} verify the assumptions made in our upper bounds where only one of these bias assumptions is required.

Finally, our upper bounds will use the following settings, that we refer to as \textbf{Setting~\ref{setting1}} and \textbf{Setting~\ref{setting2}}:
\begin{enumerate}
    \item Asynchronous oracle is used, assumption~\eqref{hyp:noise2} holds.\label{setting1}
    \item Synchronous oracle is used, assumption~\eqref{hyp:noise1} holds.\label{setting2}
\end{enumerate}

A key instance of our problem is \textbf{\emph{collaborative generalization error minimization (GEM)}}: let $\cD_i$ for $1\leq i\leq N$ be a probability distribution on a set $\Xi$ (agent $i$'s local distribution, \emph{not} its empirical distribution), $\ell:\cX\times \Xi\to\R$ a loss function, and define the following local objective function, that agent $i$ aims at minimizing:
\begin{equation}
	f_i(x)=\E_{\xi_i\sim\cD_i}\left[\ell(x,\xi_i)\right]\,,\quad x\in\cX\,.\label{eq:function_loss}
\end{equation}
Function $f_i$ is thus the  generalization error on agent $i$'s local distribution.
Stochastic gradients are in that case of the form $g_i^k(x)=\nabla_x\ell(x,\xi_i^k)$ where $\xi_i^k\sim\cD_i$. Counting the number of stochastic gradients used in the whole set of agents to reach a precision $\eps$ for $f_i$ thus reduces to computing the number of samples required from all agents to obtain local generalization error $\eps$ for agent $i$. 
In Appendix~\ref{sec:discuss} we discuss how our bias assumptions follow from bounds on distances between distributions $\cD_i$, $\cD_j$ together with practical scenarios where such bounds can be obtained. 

\textbf{Notation:} in the rest of the paper, variables $t$ or $T$ denote the number of stochastic gradients $g_i^k$ sampled (or data item sampled from personal distribution) 
from all agents, while variables $k$ or $K$ denote the iterates of the algorithms or equivalently to the number of oracle calls made.

\section{IT Lower Bounds on the Sample Complexity \label{sec:lower}}

In this section, we prove lower bounds on the sample complexity of the \emph{all-for-one} problem.
Our lower bounds apply to collaborative GEM, i.e. functions $(f_i)_{1\leq i\leq N}$ of the form~\eqref{eq:function_loss}, for some shared loss function $\ell$ and user distributions $\cD_1,\ldots,\cD_N$.

An oracle $\phi:\R^{N\times d}\to\cI$ is a random function that answers some $\phi(x)\in\cI$ where $\cI$ is an information set, for every query $x\in\R^{N\times d}$. We adapt the definitions of~\citet{agarwal2012ITlowerbounds} of sample complexity for \emph{SGD} to our personalization problem.
Formally, the \emph{first-order} oracle we defined in Section~\ref{sec:hyp} (for either synchronous or asynchronous oracles) and that we write as $\phi\big((\cD_i)_{i=1,\ldots,N},\ell\big)$ for shared loss function $\ell$ and user distributions $\cD_1,\ldots,\cD_N$, returns for $x\in\R^{N\times d}$: 
\begin{equation*}    \phi\big((\cD_i)_{i},\ell\big)(x)=\Big(i,\,x_i,\,\xi_i,\,\ell(x_i,\xi_i),\,g_i^k(x_i)\Big)_{i\in S^k}\,,
\end{equation*}
where $\xi_i\sim\cD_i$ and $S^k$ is the set of agents returning a stochastic gradient at step $k$, according to the synchronous or asynchronous oracle in use.
Given distributions and a loss function $\big((\cD_i)_i,\ell\big)$, we denote by $\M$ the set of all methods $\cM=(\cM_T)_{T\geq 0}$: for any $T\geq 0$, $\cM_T$ makes $K$ oracle calls from oracle $\phi\big((\cD_i)_i,\ell\big)$ while using $T$ stochastic gradient samples from all agents ($T=NK$ with the synchronous oracle, $T=K$ with the asynchronous oracle), and returns $x_i^T\in\R^d$ for agent $i$.
For a set $\D$ of couples of distributions and loss function $\big((\cD_j)_j,\ell\big)$ defining functions $(f_i)_{1\leq i\leq N}$, 
we are interested in lower-bounding:
\begin{equation*}
    \inf_{\cM\in\M}\sup_{((\cD_j)_j,\ell)\in\D} \cT_i^{\eps}\Big(\cM,\big((\cD_j)_j,\ell\big)\Big)\,,
\end{equation*}
where $\cT_i^{\eps}\Big(\cM,\big((\cD_j)_j,\ell\big)\Big)$ is the number of samples required from all agents (uniformly divided between agents) to reach generalization error $\eps>0$ for agent~$i$ and writes as:
\begin{align*}
    \inf\set{ T\in\N^* \,\text{such that}\, \esp{f_i(x_i^T)-\min_{x\in\cX}f_i(x)}\leq \eps}\,.
\end{align*}
For $(b_{ij})\in{\R^+}^{N\times N}$, $r>0$, $B>0$, let $\D(r,b,B)$ be the set of all $\big((\cD_i)_{1\leq i \leq N},\ell\big)$ for probability distributions $(\cD_i)_{1\leq i \leq N}$ on a probability space $\Xi\subset\R^D$ such that all functions $f_i$ parameterized by the losses and distributions in this set verify $\NRM{x_i^\star}\leq r$, assumption~\eqref{hyp:noise2}, and bias assumptions \eqref{hyp:bias1} and~\eqref{hyp:bias2} for $b$ and $\tilde{b}_{ij}=(b_{ij}/r)^2$.

Similarly, for $\sigma>0$, $\mu,L>0$, let $\D_\mu^L(r,b,B)$ be the set of all $\big((\cD_i)_{1\leq i \leq N},\ell\big)$ for probability distributions $(\cD_i)_{1\leq i \leq N}$ on a probability space $\Xi\subset\R^D$ such that all functions $f_i$ parameterized by this set verify $\NRM{x_i^\star}\leq r$,  assumption~\eqref{hyp:noise1}, and bias assumptions~\eqref{hyp:bias1} and~\eqref{hyp:bias2} for $b$ and $\tilde{b}=\mu b$ here. 

\begin{theorem}[IT lower bound]\label{thm:lower_afo} 
    Let $\eps\in(0,1/16)$, and assume that either the synchronous or asynchronous oracle is used, that $(b_{ij})$ verifies the triangle inequality $b_{ij}\leq b_{ik}+b_{kj}$ for all $1\leq i,j,k\leq N$. For some constant $C>0$ independent of the problem and any $i\in\{1,\ldots,N\}$:
	\begin{equation*}
		\inf_{\cM\in\M}\!\sup_{((\cD_j)_j,\ell)\in\D(r,b,B)}\!\!\!\!\!\! \cT_i^{\eps}\Big(\cM,\big((\cD_j)_j,\ell\big)\Big) \!\geq\! \frac{Cr^2B^2N}{\eps^{2}\cN_i^\eps(\frac{b}{4})}\,,
	\end{equation*}
	where $\cN_i^\eps(b)=\sum_j \mathds{1}_{\{b_{ij}\leq \eps\}}$ is the number of agents $j$ verifying $b_{ij}\leq \eps$.
	Under strong-convexity and smoothness assumptions for $\mu=L=1/r^2$, 
	we have, for some $C'>0$:
	\begin{equation*}
		\inf_{\cM\in\M}\!\sup_{((\cD_j)_j,\ell)\in\D_{\mu=1/r^2}^{L=1/r^2}(r,b,\sigma)}\!\!\!\!\!\! \cT_i^{\eps}\Big(\cM,\big((\cD_j)_j,\ell\big)\Big) \geq \frac{C'r^2\sigma^2N}{\eps\cN_i^\eps(\frac{b}{4})}\,.
	\end{equation*}
\end{theorem}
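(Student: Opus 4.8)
The plan is to prove both bounds by a two-point (Le Cam) reduction in dimension $d=1$, in which the number of \emph{informative} agents is pinned to exactly $\cN_i^\eps(b/4)$ by the gradient-discrepancy constraint \eqref{eq:hyp:bias2}. I build two instances $\mathcal P^+,\mathcal P^-\in\D(r,b,B)$ that differ only through the local distributions of the agents with $b_{ij}\leq 4\eps$, and I show that every agent with $b_{ij}>4\eps$ can be \emph{frozen} to a single $\nu$-independent function (hence carries no information, since the oracle only reveals samples $\xi_j\sim\cD_j^\nu$), while agents with $b_{ij}\leq 4\eps$ are forced to track agent $i$ and thus carry the signal. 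As both oracles sample each agent $T/N$ times (on average, for the asynchronous oracle), the whole information available to separate $\mathcal P^+$ from $\mathcal P^-$ comes from $\cN_i^\eps(b/4)$ agents, each queried $T/N$ times, and the single-agent rate applied to this effective sample size gives the result.

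For the strongly convex (second) bound I take, on the informative agents, $f_i^\nu(x)=\tfrac{\mu}{2}(x-\nu\delta)^2$ with $\mu=1/r^2$ and $\delta=2\sqrt{\eps/\mu}$, realised in GEM by $\ell(x,\xi)=\tfrac{\mu}{2}x^2-\xi x$ with $\xi$ Gaussian of mean $\mu\nu\delta$ and variance $\sigma^2$; then $\E g_i^\nu=\nabla f_i^\nu$, the noise satisfies \eqref{eq:hyp:noise1}, and $\NRM{x_i^\star}=\delta\leq r$ since $\eps<1/16$. Every close agent ($b_{ij}\leq4\eps$) is given this same $f_i^\nu$ (zero discrepancy to $i$), while every far agent ($b_{ij}>4\eps$) is frozen to the midpoint function $\tfrac{\mu}{2}x^2$, independent of $\nu$. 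The crucial point is that $\delta=2\sqrt{\eps/\mu}$ is exactly the largest perturbation for which the frozen far agents still satisfy \eqref{eq:hyp:bias2}: indeed $\NRM{\nabla f_i^\nu-\nabla(\tfrac{\mu}{2}x^2)}^2=\mu^2\delta^2=\mu\cdot 4\eps\leq \mu\, b_{ij}=\tilde b_{ij}$, and likewise \eqref{eq:hyp:bias1} holds as $f_i^\nu(0)-\min f_i^\nu=\tfrac{\mu}{2}\delta^2=2\eps\leq b_{ij}$; the triangle-inequality hypothesis on $b$ then ensures the remaining far--far and far--close discrepancies are mutually consistent.

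With this construction the two $\eps$-sublevel sets of $f_i^+$ and $f_i^-$ are disjoint (their centres are $2\delta=4\sqrt{\eps/\mu}$ apart, larger than the sum $2\sqrt{2\eps/\mu}$ of their radii), so any $x$ with $f_i^\nu(x)-\min f_i^\nu\leq\eps$ determines $\nu$; a model with expected error $\leq\eps$ on both instances therefore identifies $\nu$. Since frozen agents contribute zero Kullback--Leibler divergence, the total divergence between $\mathcal P^+$ and $\mathcal P^-$ is $\cN_i^\eps(b/4)\cdot\tfrac{T}{N}\cdot\tfrac{2\mu^2\delta^2}{\sigma^2}$, and Le Cam's inequality forces this to be $\gtrsim1$ whenever identification succeeds. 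Substituting $\delta^2=4\eps/\mu$ and $\mu=1/r^2$ yields $\cT_i^\eps\gtrsim \tfrac{N\sigma^2}{\mu\,\eps\,\cN_i^\eps(b/4)}=\tfrac{N r^2\sigma^2}{\eps\,\cN_i^\eps(b/4)}$, which is the second claim.

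The convex (first) bound follows the same template, replacing the quadratics by the bounded-gradient, Lipschitz two-point family of \citet{agarwal2012ITlowerbounds} (minimisers at $\pm r$, slope $O(B)$, a Bernoulli sample of bias $\nu\delta$): the per-sample divergence is then $\asymp\delta^2$, the suboptimality gap of the wrong sign is $\asymp rB\delta$, and the binding constraint is again \eqref{eq:hyp:bias2}, now with $\tilde b_{ij}=(b_{ij}/r)^2$, whose threshold is calibrated to $4\eps$; matching the gap to $\eps$ and imposing total divergence $\gtrsim1$ gives $\cT_i^\eps\gtrsim N r^2B^2/(\eps^2\cN_i^\eps(b/4))$. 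I expect the main obstacle to be precisely this calibration: one must verify \eqref{eq:hyp:bias1} \emph{and} \eqref{eq:hyp:bias2} for the frozen far agents in \emph{both} instances, and keep all pairwise discrepancies compatible with the prescribed matrix $b$ and its triangle inequality, while simultaneously making the perturbation large enough to force sign-identification at error level $\eps$. It is this coupling between the perturbation geometry and the discrepancy constraint that fixes the constant $4$; once it is in place, the Le Cam bookkeeping is routine.
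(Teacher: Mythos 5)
Your overall strategy (a reduction to distinguishing perturbation signs, with the informative set pinned to the agents within distance $O(\eps)$ of agent $i$, and an information inequality applied to $\cN_i^\eps(b/4)\cdot T/N$ effective samples) is the same as the paper's, which follows \citet{agarwal2012ITlowerbounds} but uses Fano over a $d/4$-packing of the hypercube rather than a two-point Le Cam argument; that difference is cosmetic, since the packing only supplies the factor $d=r^2$ that you recover by rescaling. The genuine gap is in your construction of the instances. You freeze every far agent ($b_{ij}>4\eps$) to a single $\nu$-independent function and give every close agent ($b_{ij}\leq 4\eps$) the full perturbation $\delta$, and you assert that the triangle inequality on $b$ makes the remaining pairwise discrepancies consistent. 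It does not. The class $\D(r,b,B)$ (and $\D_\mu^L$) requires \eqref{eq:hyp:bias1} and \eqref{eq:hyp:bias2} for \emph{all} pairs $(j,k)$, not only pairs involving $i$. Take $j$ with $b_{ij}=4\eps-\tau$ and $k$ with $b_{ik}=4\eps+\tau$ for small $\tau>0$; the triangle inequality only forces $b_{jk}\geq|b_{ij}-b_{ik}|=2\tau$, which can be arbitrarily small, yet in your construction $j$ carries the full perturbation and $k$ carries none, so $\NRM{\nabla f_j-\nabla f_k}^2=\mu^2\delta^2=4\mu\eps$ and $f_j(x_k^\star)-f_j(x_j^\star)=2\eps$, both of which violate the required bounds $\mu b_{jk}=2\mu\tau$ and $b_{jk}=2\tau$. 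Your hard instances therefore lie outside the class over which the supremum is taken, and the lower bound does not follow.

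The paper's proof avoids exactly this by soft-thresholding the perturbation: agent $j$'s Bernoulli bias is $\delta_j=(\delta-b_{ij})^+$ in the convex case and $\delta_j=(\delta-\sqrt{b_{ij}})^+$ in the strongly convex case. The $1$-Lipschitzness of $s\mapsto s^+$ combined with the triangle inequality then gives $|\delta_j-\delta_k|\leq b_{jk}$ (resp.\ $(\delta_j-\delta_k)^2\leq|b_{ij}-b_{ik}|\leq b_{jk}$) for \emph{every} pair, so all instances stay in the class, while the mutual information is still controlled by $\sum_j\mathds{1}_{\{b_{ij}<\delta\}}\delta_j^2\leq\cN_i^{4\eps}(b)\,\delta^2$, which is what produces $\cN_i^\eps(b/4)$ after setting $\eps=\psi(\delta)\asymp\delta$ (resp.\ $\asymp\delta^2$). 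If you replace your frozen/full dichotomy by this interpolation, your Le Cam bookkeeping goes through essentially unchanged; as written, the step you flagged yourself as ``routine once the calibration is in place'' is precisely the one that fails.
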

The proof of these lower bounds (Appendix~\ref{app:lower}) builds on lower bounds based on Fano's inequality \citep{duchi2013distancebased} for stochastic gradient descent~\cite{agarwal2012ITlowerbounds} or for information limited statistical estimation~\citep{zhang_information-theoretic_2013,DuchR19}, adapted to personalization.

\textbf{Lower-bound interpretation.} Theorem~\ref{thm:lower_afo} states that, given the knowledge of $(b_{ij})$ and $B^2$ (or~$\sigma^2$, $\mu$ and $L$), there exist difficult instances of the problem that satisfy assumption~\eqref{hyp:noise2} (or~\eqref{hyp:noise1}) and \emph{both} bias assumptions~\eqref{hyp:bias1} and~\eqref{hyp:bias2} for $b$, such that the number of samples from all agents (generated through the synchronous or asynchronous oracle) needed to obtain a generalization error of $\eps$ for an agent $i$ is lower-bounded by the right hand sides of the equations in Theorem~\ref{thm:lower_afo}.

\textbf{Collaboration speedup.} The factor $N\times CB^2r^2\eps^{-2}$ is reminiscent of stochastic gradient descent, and is present in~\cite{agarwal2012ITlowerbounds}: without cooperation, this is the sample complexity of \emph{SGD} for a fixed agent (for $T$ samples from all agents, only $T/N$ are taken from a given agent $i$).
Cooperation appears in the factor $1/\cN_i^\eps(b/4)$: the sample complexity is inversely proportional to the number of agents $j$ that have functions (or distributions) similar to that of $i$. One cannot hope for better than a linear speedup proportional to agents $\eps/4$-close to $i$, in the sense that the functions parameterized by the loss function and distributions verify $f_i(x_j^\star)-f_i(x_i^\star)\leq b_{ij}/4$ and $\NRM{\nabla f_i(x)-\nabla f_j(x)}^2\leq \tilde{b}_{ij}$ for all $x\in\R^d$, where $\tilde{b}=(b/r)^2$ in the first case, and $\tilde{b}=b/r^2=\mu b$ in the second. 

\textbf{Bias assumptions and collaborative GEM.} In Section~\ref{sec:gem}, we relate $\NRM{\nabla f_i -\nabla f_j}$ to distribution-based distances $d_\cH(\cD_i,\cD_j)$ in the GEM setting, where $d_\cH$ is a 1-Wassertstein-like distance. We prove that the distributions leading to the functions used to prove Theorem~\ref{thm:lower_afo} verify $d_\cH(\cD_i,\cD_j)\leq b_{ij}$; thus, the distribution based distances also sharply characterize the sample complexity. The requirement for the lower bound to apply that $(b_{ij})$ verifies the triangle inequality is thus quite natural, when translating bias assumptions into distribution-based distances.
 

\section{Weighted Gradient Averaging is optimal in the \emph{All-for-one} setting} 

We now prove that a weighted gradient averaging (WGA) algorithm is optimal in the \emph{all-for-one} setting. Fix the agent~$i$ who desires to minimize its local function $f_i$ (no longer assumed to derive from a loss function). The iterates of the weighted gradient averaging algorithm write as, for both synchronous and asynchronous oracles:
\begin{equation}\label{eq:wga}
    x^{k+1}=x^k-\eta\sum_{j=1}^N \lambda_j G^k(x)_j\,,
\end{equation}
for some step size $\eta>0$ and stochastic vector $\lambda\in\R^N$, and:
\begin{equation}\label{eq:full_stoch}
    G^k(x)_i=\left\{
    \begin{aligned}
    & N\mathds{1}_{\set{i=i_k}} g_{i_k}^k(x)  \quad \text{(asynchronous)}\\
    & g_{i}^k(x) \,\,\,\,\,\quad\quad\quad \text{(synchronous oracle)}\,.
    \end{aligned}\right.
\end{equation}
In the WGA algorithm, all agents involved keep the same local estimates $x_i^k$.
As defined in Section~\ref{sec:hyp}, $(g_i^k)$ are \emph{i.i.d.}~non-biased stochastic (sub)gradients of functions $f_i$.
The WGA algorithm thus defined is more general than previous ones, through the introduction of parameter~$\lambda$. 
In the GEM setting, WGA is equivalent to training a model on the mixture of distributions $(\cD_j)_j$, with weights $(\lambda_j)_j$.
We have the following convergence guarantees in terms of \emph{sample complexity} for the minimization of a function $f_i$ with WGA iterations.
\begin{theorem}\label{thm:wga_sample} Let $\eps>0$, and set $\lambda_j=\frac{\mathds{1}_{\set{b_{ij}<\eps/2}}}{\cN_i^{\eps}(2b)}$, $\cN_i^\eps$ being defined in Theorem~\ref{thm:lower_afo}. Generate $(x^k)_k$ using~\eqref{eq:wga}.
\begin{enumerate}[label=\ref{thm:wga_sample}.\arabic*]
    \item \label{thm:wga_sample1} \textbf{Under Setting~\ref{setting1}.} Assume that for all $k\geq0$ and $i$, $\NRM{x^k-x_i^\star}\leq D$ for some $D>0$ and bias assumption~\eqref{hyp:bias2} holds for $\tilde{b}=(b/D)^2$. For $K=T_\eps$ and for $\eta=\sqrt{\frac{D^2}{2KNB^2\sum_{j=1}^N\lambda_j^2}}$, we have $\esp{\frac{1}{K}\sum_{0\leq k<K}f_i(x^k)-f_i(x_i^\star)}\leq \eps$ using a total number $T_\eps(i)$ of stochastic gradients from all agents of:
    \begin{equation*}
        T_\eps(i) \leq\frac{4D^2B^2}{\eps^2}\frac{N}{\cN_i^\eps(2b)}\,.
        \vspace{-7pt}
    \end{equation*}
    \item \label{thm:wga_sample2} \textbf{Under Setting~\ref{setting2}.} Denote $\kappa=L/\mu$, assume that one of bias assumptions~\eqref{hyp:bias2} or~\eqref{hyp:bias3} holds for $\tilde{b}=\mu b$ (and any $m\geq0$). For $K=T_\eps/N$ and $\eta=\min(1/(2L), \frac{1}{\mu K}\ln(\frac{F_\lambda^0\mu^2K}{L\sigma^2}\sum_j\lambda_j^2))$ where $F_\lambda^0=\sum_j \lambda_j( f_j(x^0)-f_j(x_j^\star))$, we have $\esp{f_i(x^K)-f_i(x_i^\star)}\leq\eps$ using a total number $T_\eps(i)$ of stochastic gradients from all agents of:
    \begin{equation*}
        T_\eps(i) \leq \Tilde{\mathcal{O}}\left(\frac{\kappa\sigma^2}{\mu\eps}\frac{N}{\cN_i^\eps(2b)}\right)\,.
    \end{equation*}
\end{enumerate}
\end{theorem}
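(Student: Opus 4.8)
The plan is to recognise the weighted gradient averaging recursion~\eqref{eq:wga} as plain stochastic gradient descent on the \emph{aggregated} objective $f_\lambda := \sum_{j=1}^N \lambda_j f_j$, and then to split the resulting error into an optimization/variance part (convergence of SGD on $f_\lambda$) and a bias part (the gap between minimizing $f_\lambda$ and minimizing $f_i$). First I would check that $\hat g^k := \sum_j \lambda_j G^k(x^k)_j$ is an unbiased estimate of $\nabla f_\lambda(x^k)$ under both oracles: for the synchronous oracle this is immediate, while for the asynchronous oracle the factor $N\mathds{1}_{\{j=i_k\}}$ exactly compensates the $1/N$ probability of selecting $j$. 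The decisive computation is the second moment of $\hat g^k$: since $\lambda$ is supported on $\{j:b_{ij}<\eps/2\}$ with $\lambda_j=1/\cN_i^\eps(2b)$ there, one has $\sum_j\lambda_j^2 = 1/\cN_i^\eps(2b)$, giving $\E\NRM{\hat g^k}^2 \le NB^2/\cN_i^\eps(2b)$ under~\eqref{hyp:noise2} (asynchronous) and, using independence of the $g_j^k$ across $j$, the variance bound $\E\NRM{\hat g^k-\nabla f_\lambda(x^k)}^2 \le \sigma^2/\cN_i^\eps(2b)$ under~\eqref{hyp:noise1} (synchronous). This factor $1/\cN_i^\eps(2b)$ is exactly the collaboration speedup. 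The bias is controlled by writing $\nabla f_\lambda = \nabla f_i + \sum_j\lambda_j(\nabla f_j-\nabla f_i)$ and noting that every $j$ in the support satisfies $b_{ij}<\eps/2$, so~\eqref{hyp:bias2} bounds the correction uniformly in $x$.

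For \textbf{Setting~\ref{setting1}} I would run the classical convex SGD descent estimate measured against $x_i^\star$ (rather than against $\argmin f_\lambda$). Expanding and using unbiasedness, $\E\NRM{x^{k+1}-x_i^\star}^2 = \E\NRM{x^k-x_i^\star}^2 - 2\eta\,\E\langle\nabla f_\lambda(x^k),x^k-x_i^\star\rangle + \eta^2\E\NRM{\hat g^k}^2$. Convexity of $f_i$ gives $\langle\nabla f_i(x^k),x^k-x_i^\star\rangle \ge f_i(x^k)-f_i(x_i^\star)$, while Cauchy--Schwarz with $\tilde b=(b/D)^2$ and $\NRM{x^k-x_i^\star}\le D$ bounds the bias term by $(\eps/2D)\cdot D = \eps/2$. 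Telescoping over $k=0,\dots,K-1$, dividing by $K$, and inserting the second-moment bound yields $\frac1K\sum_k\E[f_i(x^k)-f_i(x_i^\star)] \le \eps/2 + D^2/(2\eta K) + \tfrac{\eta}{2}NB^2/\cN_i^\eps(2b)$; the stated step size optimizes the last two terms to $\sqrt{D^2NB^2/(K\cN_i^\eps(2b))}$, and forcing this to be $\le\eps/2$ gives $T_\eps(i)=K\le 4D^2B^2N/(\eps^2\cN_i^\eps(2b))$.

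For \textbf{Setting~\ref{setting2}} I would use the classical linear-convergence-up-to-a-noise-floor analysis of SGD for the $\mu$-strongly convex, $L$-smooth $f_\lambda$, but track the \emph{distance} $\E\NRM{x^k-x_\lambda^\star}^2$. With $\eta\le 1/(2L)$ one gets the contraction $\E\NRM{x^{k+1}-x_\lambda^\star}^2 \le (1-\mu\eta)\E\NRM{x^k-x_\lambda^\star}^2 + \eta^2\sigma^2/\cN_i^\eps(2b)$, hence $\E\NRM{x^K-x_\lambda^\star}^2 \le (1-\mu\eta)^K\NRM{x^0-x_\lambda^\star}^2 + \eta\sigma^2/(\mu\cN_i^\eps(2b))$; since $\NRM{x^0-x_\lambda^\star}^2\le 2F_\lambda^0/\mu$ by strong convexity, this is where $F_\lambda^0$ enters the step size. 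Tuning $\eta$ as stated balances the geometric term against the floor and yields $\E\NRM{x^K-x_\lambda^\star}^2 = \tilde{\cO}(\sigma^2/(\mu^2 K\,\cN_i^\eps(2b)))$. It remains to convert this into a guarantee on $f_i$: because $\nabla f_\lambda(x_\lambda^\star)=0$, one has $\nabla f_i(x_\lambda^\star)=\sum_j\lambda_j(\nabla f_i-\nabla f_j)(x_\lambda^\star)$, of norm at most $\sqrt{\mu\eps/2}$ by~\eqref{hyp:bias2} with $\tilde b=\mu b$, so strong convexity gives the \emph{$\kappa$-free} bias bound $f_i(x_\lambda^\star)-f_i(x_i^\star)\le\tfrac{1}{2\mu}\NRM{\nabla f_i(x_\lambda^\star)}^2\le\eps/4$, while $L$-smoothness controls $f_i(x^K)-f_i(x_\lambda^\star)$ by $\tfrac{L}{2}\E\NRM{x^K-x_\lambda^\star}^2$ plus a Cauchy--Schwarz cross term. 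Requiring $\E\NRM{x^K-x_\lambda^\star}^2\lesssim\eps/L$ then fixes $K=\tilde{\cO}(\kappa\sigma^2/(\mu\eps\,\cN_i^\eps(2b)))$, and multiplying by $N$ (synchronous oracle) gives $T_\eps(i)=NK=\tilde{\cO}(\kappa\sigma^2N/(\mu\eps\,\cN_i^\eps(2b)))$.

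The step I expect to be the main obstacle is precisely this final conversion in Setting~\ref{setting2}. A naive bound of the form $f_i(x^K)-f_i(x_i^\star)\le 2\kappa\,(f_\lambda(x^K)-f_\lambda^\star)+\cO(\eps)$, obtained by tracking the function suboptimality of $f_\lambda$ and passing through smoothness/strong-convexity, loses an extra factor $\kappa$ and would force $K\propto\kappa^2$, breaking the match with Theorem~\ref{thm:lower_afo}. Avoiding this requires carrying out the conversion through the distance to $x_\lambda^\star$ (so that smoothness is applied around the true limit point) and exploiting that the aggregated gradient bias $\NRM{\nabla f_\lambda-\nabla f_i}$ is \emph{uniformly} $\cO(\sqrt{\mu\eps})$ on the support of $\lambda$, which keeps every bias contribution at the $\cO(\eps)$ level with no spurious conditioning factor. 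The remaining work---verifying the two descent inequalities, the geometric-series bookkeeping, and the logarithmic factors hidden in $\tilde{\cO}$---is routine.
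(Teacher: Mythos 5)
Your proposal is correct, and for Setting~\ref{setting1} it coincides with the paper's proof (Theorem~\ref{thm:wga1} in Appendix~\ref{app:afo}): same descent inequality measured against $x_i^\star$, same Cauchy--Schwarz treatment of the bias term via $\NRM{x^k-x_i^\star}\le D$ and $\tilde b=(b/D)^2$, same second-moment bound $NB^2\sum_j\lambda_j^2$ and step-size tuning. For Setting~\ref{setting2}, however, you take a genuinely different route. The paper runs the function-value SGD analysis on $f^\lambda=\sum_j\lambda_jf_j$ (Lemma~\ref{lem:sgd_sc}) and then converts to $f_i$ at the \emph{generic iterate} via $f_i(x^K)-f_i(x_i^\star)\le\frac{1}{2\mu}\NRM{\nabla f_i(x^K)}^2$ combined with $\NRM{\nabla f_i(x^K)-\nabla f^\lambda(x^K)}^2\le m\NRM{\nabla f^\lambda(x^K)}^2+\sum_j\lambda_j\tilde b_{ij}$; this is exactly the ``naive'' conversion you warn against, it costs a factor $2(m+1)\kappa$ on the statistical term and yields $T_\eps=\tilde\cO((m+1)\kappa^2\sigma^2N/(\mu\eps\,\cN_i^\eps(2b)))$, so the paper recovers the $\kappa$ rate only by invoking Theorem~1 of \citet{chayti2021linear} under the extra restriction $m\le1/2$ (see Theorem~\ref{thm:wga_sample_appendix}.2). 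Your alternative---tracking $\E\NRM{x^k-x^\lambda}^2$ and expanding $f_i$ around the limit point $x^\lambda$, where $\nabla f^\lambda(x^\lambda)=0$ kills the $m\NRM{\nabla f^\lambda}^2$ term so that the bias is only ever evaluated at a stationary point of $f^\lambda$---gives the $\kappa$ rate directly and for any $m\ge0$ under~\eqref{hyp:bias3}, which is actually a cleaner justification of the main-text statement than the paper's own appendix argument (the cross term $\langle\nabla f_i(x^\lambda),x^K-x^\lambda\rangle$ you control by Cauchy--Schwarz and Jensen is the only extra bookkeeping, and it contributes $\cO(\eps/\sqrt{\kappa})$). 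The one cosmetic mismatch is that the stated step size is tuned in terms of $F_\lambda^0$ rather than $\NRM{x^0-x^\lambda}^2$, but as you note strong convexity relates the two up to the logarithmic factors already absorbed in $\tilde\cO$.
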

The personalization-dependent factor of both these sample complexities (the second factor $\frac{N}{\cN_i^\eps(2b)}$) matches that of the lower bound in Theorem~\ref{thm:lower_afo}. While in Theorem~\ref{thm:wga_sample1} upper and lower bounds (up to constant factors) exactly match, in the strongly convex and smooth case (Theorem~\ref{thm:wga_sample2}), the lower bound is for $\kappa=1$, and in that particular case, lower and upper bounds match; we conjecture that in the more general case, the optimal sample complexity as a linear dependency in $\sqrt{\kappa}$.
In terms of \emph{gradient filtering}, an optimal strategy is thus to filter and keep stochastic gradients sent by agents that are $\eps$-close, in the sense of~\eqref{hyp:bias2}. Theorem~\ref{thm:afa_sample} presents a tuning of the algorithm for a fixed target precision $\eps$; using a \emph{doubling trick} as we do in Appendix~\ref{app:time_adaptive} yields time-adaptive algorithms, that achieve the same sample complexity (up to constant factors) for any $\eps>0$.

In the GEM setting, the proposed WGA algorithm consists in training a model over the distribution $\cD_i^\eps$, a mixture of the distributions $\set{\cD_j\,:\,d_\cH(\cD_i,\cD_j)\leq \eps/2}$, where $d_\cH$ is a distribution-based distance, leading to an increase in the number of samples available by a factor $\cN_i^\eps$ (thus reducing the variance), while keeping the bias of order $\eps$.
\vspace{-5pt}


\section{\emph{All-for-all} Strategies \label{sec:afa}}

We now focus on the more challenging \emph{all-for-all} setting: all agents desire to minimize their local function. We first present a naive approach, that proves to be sample-optimal, but that cannot be used in practice for large scale problems, as it requires a number computations per oracle call for each agent that increases with $N$.
We thus introduce and analyze the \emph{all-for-all} algorithm, the main algorithmic contribution of our paper.

\subsection*{Naive Approach}

Each agent $i$ keeps $N$ shared local models $x_1^k,\ldots,x_N^k$, where $x_j^k$ estimates $x_j^\star$ at iteration $k$ (the knowledge of $x_j^k$ needs to be shared by all agents). At each iteration $k$, when a sample $\xi^k_j$ is obtained at agent $j$, it is used by that agent to compute unbiased estimates of $\nabla f_j(x^k_i)$ for all $i\in[N]$. These are then broadcast to all 
agents $i$ that verify $b_{ij}\leq \eps$ (target precision). Agents then perform $N$ weighted gradient averaging algorithms simultaneously, leading to the following guarantees.
Under the assumptions of Theorem~\ref{thm:wga_sample1}, we have $\max_{1\leq i\leq N}\esp{f_i(x_i^k)-f_i(x_i^\star)}\leq \eps$, with a number of data items sampled from personal distribution $T_\eps$ of:
\begin{equation*}
    T_\eps \leq\frac{4D^2B^2}{\eps^2}\max_{1\leq i\leq N}\frac{N}{\cN_i^\eps(2b)}\,.
\end{equation*}
Under the assumptions of Theorem~\ref{thm:wga_sample2}, we have $\max_{1\leq i\leq N}\esp{f_i(x_i^k)-f_i(x_i^\star)}\leq \eps$, with a number of data item sampled from personal distribution $T_\eps$ of:
\begin{equation*}
    T_\eps \leq \Tilde{\mathcal{O}}\left(\frac{\kappa\sigma^2}{\mu\eps}\max_{1\leq i\leq N}\frac{N}{\cN_i^\eps(2b)}\right)\,.
\end{equation*}
Using the \emph{all-for-one} lower bounds of Theorem~\ref{thm:lower_afo}, this proves to be optimal. Yet, the memory requirements and computation/communication costs of this approach are forbiddingly high for large $N$ (they scale with $\cN_i^\eps$ for agent $i$), hence the \emph{all-for-all} algorithm below, that relaxes the metric considered by controlling the averaged local errors $N^{-1}\sum_{1\leq i\leq N}\esp{f_i(x_i^k)-f_i(x_i^\star)}$ instead of worst-case ones $\max_{1\leq i\leq N}\esp{f_i(x_i^k)-f_i(x_i^\star)}$. Furthermore, relaxing the quantity controlled leads to weaker necessary bias assumptions.

\subsection*{The \emph{All-for-all} algorithm}

\begin{algorithm}[h]
\caption{\emph{All-for-all} algorithm}
\label{algo:afa}
\begin{algorithmic}[1]
\STATE Step size $\eta>0$, matrix $W\in\R^{N\times N}$ 
\vspace{0.5ex}
\STATE Initialization $x_1^0=\ldots=x_N^0\in\R^d$ ($x_i^0$ at agent $i$)
\vspace{0.5ex}
\FOR{$k=0,1,2,\ldots K-1$}
\vspace{0.5ex}
\STATE Agents $j\in S^k$ (activated agents) compute stochastic gradients $g_j^k(x_j^k)$ and broadcast it to all agents $i$ such that $W_{ij}>0$.
\vspace{0.5ex}
\STATE For $i=1,\ldots,N$, update \vspace{-5pt}
\begin{equation*}
    x_i^{k+1}=x_i^k-\eta\sum_{j\in S^k}W_{ij}g^k_j(x^k_j)
    \vspace{-7pt}
\end{equation*}
\ENDFOR \quad Return $x_i^K$ for agent $i$
\end{algorithmic}
\end{algorithm}
\vspace{-10pt}
We now present the \emph{all-for-all} algorithm (AFA), an adaptation of the weighted gradient averaging algorithm to the \emph{all-for-all} setting, where all agents desire to use all stochastic gradients computed. For $1\leq i\leq N$, initialize $x_i^0=x_0\in\R^d$. At iteration $k$, let $x_i^k\in\R^d$ be agent $i$'s current estimate of $x_i^\star$, and denote $x^k=(x_i^k)_{1\leq i\leq N}\in\R^{N\times d}$. For a step size $\eta>0$ and a symmetric non-negative matrix $W\in\R^{N\times N}$, iterates of the \emph{all-for-all} algorithm are generated with Algorithm~\ref{algo:afa}. 
In Theorem~\ref{thm:afa}, we control the averaged local generalization error amongst all agents:
\begin{equation*}
    F^k=\frac{1}{N}\sum_{i=1}^Nf_i(x_i^k)-f_i(x_i^\star)\,,\quad k\geq 0\,.
\end{equation*}
\begin{theorem}[\emph{All-for-all} algorithm]\label{thm:afa}
Let $K>0$, $\eta>0$, and $W$ a symmetric non-negative random matrix of the form $W=\Lambda\Lambda^\top$ for some stochastic matrix $\Lambda=(\lambda_{ij})_{1\leq i,j\leq N}$.
Let $(x_i^k)_{k\geq0,1\leq i\leq N}$ be generated with Algorithm~\ref{algo:afa}. Assume that bias assumption~\eqref{hyp:bias1} holds for some $(b_{ij})$.
\begin{enumerate}[label=\ref{thm:afa}.\alph*]
    \item \label{thm:afa1} \textbf{Under Setting \ref{setting1}.} Using step size $\eta=\sqrt{\frac{2N\NRM{x^0-x^\Lambda}^2}{KB^2\sum_{i,j}\lambda_{ij}^2}}$:
    \begin{align*}
        \esp{\frac{1}{K}\sum_{k=0}^{K-1}F^k}\!
        &\!\leq \sqrt{\frac{2B^2\sum_{i=1}^N\NRM{x_i^0-x_i^\Lambda}^2}{NK}\!\!\!\!\!\sum_{1\leq i,j\leq N}\lambda_{ij}^2} \\
        &+\frac{1}{N}\sum_{1\leq i,j \leq N} \lambda_{ij}b_{ij}\,,
    \end{align*}
    \item \label{thm:afa2} \textbf{Under Setting \ref{setting2}.} If $\eta=\frac{1}{\mu K}\ln(\frac{NF^0\mu^2K}{L\sigma^2\sum_{ij}\lambda_{ij}^2})\wedge \frac{1}{2L}$:
    \begin{align*}
        \esp{F^K}&\leq F^0e^{- \frac{K}{2\kappa}} + \Tilde{\cO}\left(\frac{L\sigma^2}{K\mu^2N}\sum_{1\leq i,j\leq N}\lambda_{ij}^2 \right)\\
        &+\frac{1}{N}\sum_{1\leq i,j\leq N} \lambda_{ij}b_{ij}\,.
    \end{align*}
\end{enumerate}
In \ref{thm:afa1}, $x^\Lambda$ is a minimizer of the function~\eqref{eq:fLambda}. In terms of sample complexity, we have the following.
\end{theorem}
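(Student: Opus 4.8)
The plan is to reduce Algorithm~\ref{algo:afa} to ordinary \emph{SGD} on a single surrogate objective by exploiting the factorization $W=\Lambda\Lambda^\top$. First I would introduce the change of variables $x=\Lambda y$: since $\Lambda\mathbf 1=\mathbf 1$ the common initialization lies in $\mathrm{range}(\Lambda)$, and every increment $x^{k+1}-x^k=-\eta\Lambda(\Lambda^\top g^k)$ stays there, so the iterates may be written $x^k=\Lambda y^k$ with $y^{k+1}=y^k-\eta\Lambda^\top g^k$. The vector $\Lambda^\top g^k$ is exactly an unbiased stochastic gradient of the surrogate $f^\Lambda(y):=\sum_{i}f_i\big((\Lambda y)_i\big)=\sum_i f_i\big(\sum_j\lambda_{ij}y_j\big)$ (the function~\eqref{eq:fLambda}), because $\nabla_{y_j}f^\Lambda=\sum_i\lambda_{ij}\nabla f_i((\Lambda y)_i)$ and $\E\big[g_i^k(x_i^k)\big]=\nabla f_i(x_i^k)$. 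The crucial bookkeeping identity is that the tracked objective coincides with the surrogate, $\sum_i f_i(x_i^k)=f^\Lambda(y^k)$, so that $NF^k=f^\Lambda(y^k)-\sum_i f_i(x_i^\star)$.

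Next I would split this into an optimization error and a bias, $NF^k=\big(f^\Lambda(y^k)-\min f^\Lambda\big)+\big(\min f^\Lambda-\sum_i f_i(x_i^\star)\big)$, where $x^\Lambda=\Lambda y^\Lambda$ with $y^\Lambda\in\argmin f^\Lambda$. The bias term is controlled deterministically by evaluating $f^\Lambda$ at the feasible point $y_j=x_j^\star$: convexity of each $f_i$ together with $\sum_j\lambda_{ij}=1$ gives $f_i\big(\sum_j\lambda_{ij}x_j^\star\big)\le\sum_j\lambda_{ij}f_i(x_j^\star)$, and Assumption~\eqref{hyp:bias1} then gives $f_i(x_j^\star)\le f_i(x_i^\star)+b_{ij}$; summing over $i$ yields $\min f^\Lambda-\sum_i f_i(x_i^\star)\le\sum_{i,j}\lambda_{ij}b_{ij}$, which after dividing by $N$ is precisely the bias term $\frac1N\sum_{ij}\lambda_{ij}b_{ij}$ in both \ref{thm:afa1} and \ref{thm:afa2}. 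This is where the weaker assumption~\eqref{hyp:bias1} (rather than~\eqref{hyp:bias2}) suffices, since only function values at optima are needed.

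It then remains to bound the optimization error by a standard \emph{SGD} analysis on $f^\Lambda$. Under Setting~\ref{setting1} I would use the convex averaged-\emph{SGD} estimate $\E\big[\frac1K\sum_k f^\Lambda(y^k)-\min f^\Lambda\big]\le\frac{\NRM{y^0-y^\Lambda}^2}{2\eta K}+\frac{\eta}{2}\sup_k\E\NRM{\Lambda^\top g^k}^2$, evaluate the noise level $\E\NRM{\Lambda^\top g^k}^2\lesssim N B^2\sum_{ij}\lambda_{ij}^2$ (the factor $N$ coming from the asynchronous-oracle rescaling in~\eqref{eq:full_stoch} and Assumption~\eqref{hyp:noise2}), and optimize in $\eta$ to recover the $\sqrt{\,\cdot/K\,}$ term after dividing by $N$. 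Under Setting~\ref{setting2} I would instead invoke the strongly-convex smooth \emph{SGD} descent lemma, which contracts the suboptimality by $e^{-K/2\kappa}$ and leaves a variance floor of order $\tilde\cO\big(L\sigma^2\sum_{ij}\lambda_{ij}^2/(\mu^2NK)\big)$ under the stated step size, using the synchronous noise bound~\eqref{hyp:noise1}.

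The main obstacle is handling the coupling introduced by $W$ so that the linear rate in Setting~\ref{setting2} features the clean conditioning $\kappa=L/\mu$ of the individual functions rather than the conditioning of $\Lambda$. The clean way to achieve this is to run the descent in the $W^{-1}$-weighted metric (equivalently, in the $y$-variables): the cross term $\langle Wg^k,\,x^k-x^\Lambda\rangle_{W^{-1}}=\langle g^k,\,x^k-x^\Lambda\rangle$ collapses to the plain gradient inner product, so that strong convexity and smoothness of the \emph{block-separable} $\tilde F(x)=\sum_i f_i(x_i)$ transfer directly with constants $\mu,L$, while $\NRM{Wg^k}_{W^{-1}}^2=\NRM{\Lambda^\top g^k}^2$ produces exactly the $\sum_{ij}\lambda_{ij}^2$ variance factor. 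Verifying that the iterates remain in $\mathrm{range}(\Lambda)$ (so that $x^\Lambda$ is the correct projected optimum and the $W^{-1}$ pseudo-metric is legitimate), reconciling the $y$-space distance with the $\NRM{x^0-x^\Lambda}^2$ appearing in the step sizes, and matching the two different oracle noise scalings, are the points that will require the most care.
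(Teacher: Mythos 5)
Your proposal follows essentially the same route as the paper: the change of variables $x^k=\Lambda y^k$ reducing Algorithm~\ref{algo:afa} to SGD on the surrogate $f^\Lambda$ of~\eqref{eq:fLambda}, the bias--variance split with the bias term controlled exactly as in Lemma~\ref{lem:bias} (convexity plus stochasticity of $\Lambda$ and~\eqref{hyp:bias1}), and the optimization term handled by the standard convex and strongly-convex SGD lemmas. Your $W^{-1}$-weighted-metric device for preserving the conditioning $\kappa=L/\mu$ is the same mechanism the paper phrases as $L/N$-relative smoothness and $\mu/N$-relative strong convexity of $f^\Lambda$ with respect to the quadratic induced by $W$ (whose spectral radius is $1$ since $\Lambda$ is stochastic), so the two arguments coincide.
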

\begin{theorem}[\emph{All-for-all} sample complexity]\label{thm:afa_sample} Let $\eps>0$, and set $W=\Lambda\Lambda^\top$ for $\lambda_{ij}=\frac{\mathds{1}_\set{b_{ij}<\eps/2}}{\cN_i^\eps(2b)}$.
\begin{enumerate}
    \item Under the same assumptions as in Theorem~\ref{thm:afa1}, we have $\esp{\frac{1}{K}\sum_{k=0}^{K-1}F^k}\leq\eps$ for a total number $T_\eps$ of data item sampled from personal distribution from all agents of:
    \begin{equation*}
        T_\eps\leq \frac{4D^2B^2}{\eps^2}\sum_{i=1}^N\frac{1}{\cN_i^\eps(2b)}\,,
    \end{equation*}
    where $D^2$ bounds all $\NRM{x_i^0-x_i^\Lambda}^2$, $1\leq i\leq N$.
    \item Under the same assumptions as in Theorem~\ref{thm:afa2}, we have $\esp{F^{K}}\leq\eps$ for a total number $T_\eps$ of data item sampled from personal distribution from all agents of:
    \begin{equation*}
        T_\eps\leq 2\max\left( \frac{\kappa\sigma^2}{\eps\mu}\sum_{i=1}^N \frac{1}{\cN_i^\eps(2b)},\, N\kappa \right)\ln\big(\eps^{-1}F^0\big)\,.
    \end{equation*}
\end{enumerate}
\end{theorem}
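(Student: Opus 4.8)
The plan is to read Theorem~\ref{thm:afa_sample} as a corollary of Theorem~\ref{thm:afa}: I substitute the prescribed weights $\lambda_{ij}=\mathds{1}_{\set{b_{ij}<\eps/2}}/\cN_i^\eps(2b)$ into the two error bounds of Theorem~\ref{thm:afa} and then choose the iteration count $K$ (hence the sample budget $T_\eps$) so that each right-hand side falls below $\eps$. Before that I record three elementary facts about these weights. First, since exactly $\cN_i^\eps(2b)$ indices $j$ satisfy $b_{ij}<\eps/2$, each row of $\Lambda$ sums to one, so $\Lambda$ is indeed stochastic as required and $W=\Lambda\Lambda^\top$ is well defined. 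Second, $\sum_j\lambda_{ij}^2=\cN_i^\eps(2b)\cdot\big(\cN_i^\eps(2b)\big)^{-2}=1/\cN_i^\eps(2b)$, whence $\sum_{1\le i,j\le N}\lambda_{ij}^2=\sum_{i=1}^N 1/\cN_i^\eps(2b)$. Third, because $\lambda_{ij}\ne 0$ only when $b_{ij}<\eps/2$, the bias term obeys $\frac1N\sum_{i,j}\lambda_{ij}b_{ij}\le\frac1N\sum_i\frac\eps2\sum_j\lambda_{ij}=\eps/2$, using row-stochasticity once more.

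For part (1), under Setting~\ref{setting1}, I insert these identities together with $\sum_i\NRM{x_i^0-x_i^\Lambda}^2\le ND^2$ into the bound of Theorem~\ref{thm:afa1}. The statistical term collapses to $\sqrt{(2B^2D^2/K)\sum_i 1/\cN_i^\eps(2b)}$ and the bias term is at most $\eps/2$. Forcing the statistical term below $\eps/2$ gives a lower bound on $K$ of order $(D^2B^2/\eps^2)\sum_i 1/\cN_i^\eps(2b)$; since the asynchronous oracle delivers one sample per iteration, $T_\eps=K$, which produces the claimed complexity after fixing constants.

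For part (2), under Setting~\ref{setting2}, I proceed identically with the bound of Theorem~\ref{thm:afa2}. The bias term is again at most $\eps/2$, so it remains to drive the exponential term $F^0e^{-K/(2\kappa)}$ and the optimization term $\Tilde{\cO}\big((L\sigma^2/(K\mu^2N))\sum_i 1/\cN_i^\eps(2b)\big)$ below $\eps/2$. The first requires $K=\Omega(\kappa\ln(F^0/\eps))$ and the second $K=\tilde\Omega\big((\kappa\sigma^2/(\mu N\eps))\sum_i 1/\cN_i^\eps(2b)\big)$, with $\kappa=L/\mu$. Taking $K$ to be the maximum of these two quantities and using that the synchronous oracle consumes $N$ samples per iteration, so $T_\eps=NK$, makes the factor $N$ cancel in the second branch and yields the stated $\max$ of $(\kappa\sigma^2/(\eps\mu))\sum_i 1/\cN_i^\eps(2b)$ and $N\kappa$, up to the logarithmic factor $\ln(\eps^{-1}F^0)$.

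The conceptual content is entirely carried by Theorem~\ref{thm:afa}; what remains is bookkeeping, and the one place demanding care is the translation between the iteration count $K$ and the sample budget $T_\eps$. Three factors of $N$ interact: the averaged objective $F^k$ already carries a $1/N$, the synchronous oracle spends $N$ samples per round while the asynchronous oracle spends one, and the weight normalization turns $\sum_{i,j}\lambda_{ij}^2$ into $\sum_i 1/\cN_i^\eps(2b)$. Tracking these consistently is exactly what makes the per-agent \emph{all-for-one} complexities $N/\cN_i^\eps(2b)$ of Theorem~\ref{thm:wga_sample} aggregate into the averaged quantity $\sum_i 1/\cN_i^\eps(2b)$, which is the point of the \emph{all-for-all} relaxation. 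The only delicate numerical point is the precise constant: recovering the factor $4$ rather than the $8$ suggested by a symmetric $\eps/2$ split relies on balancing the statistical and bias contributions through the optimized step size directly rather than splitting them evenly, and I would verify this balance explicitly.
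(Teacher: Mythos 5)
Your derivation is correct and matches the paper's intended route: Theorem~\ref{thm:afa_sample} is presented as an immediate corollary of Theorem~\ref{thm:afa}, obtained exactly by the substitutions $\sum_{i,j}\lambda_{ij}^2=\sum_{i}1/\cN_i^\eps(2b)$ and $\frac{1}{N}\sum_{i,j}\lambda_{ij}b_{ij}\leq\eps/2$ that you record, followed by the same choice of $K$ and the oracle-dependent conversion $T_\eps=K$ (asynchronous) or $T_\eps=NK$ (synchronous). Your closing remark about the constant is apt: a symmetric $\eps/2$ split yields $8D^2B^2\eps^{-2}\sum_i 1/\cN_i^\eps(2b)$ rather than the stated $4D^2B^2\eps^{-2}\sum_i 1/\cN_i^\eps(2b)$, and the same factor-of-two slack already appears in Theorem~\ref{thm:wga_sample1}, so this is inherited from the paper rather than a flaw in your argument.
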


\textbf{Collaborative speedup.} In Setting~\ref{setting1}, denoting $T_\eps(i)$ the sample complexity in the \emph{all-for-one} setting (that matched the corresponding lower bound), we observe that in the \emph{all-for-all} regime, we obtain $T_\eps\leq \frac{1}{N}\sum_i T_\eps(i)$. The speedup in comparison with a no-collaboration strategy (all agents locally performing SGD) is $\frac{1}{N}\sum_i\frac{1}{\cN_i^\eps(2b)}$: the mean of all \emph{all-for-one} speed-ups.
Similarly in Setting~\ref{setting2}, the speed-up in comparison with the no-collaboration setting is still $\frac{1}{N}\sum_i\frac{1}{\cN_i^\eps(2b)}$ in the statistical regime.
In Setting~\ref{setting2}, one could obtain $\sqrt{\kappa}$ instead of $\kappa$ in the optimization term, using accelerated gradient methods (with additive noise here), leading to a faster convergence to the statistical regime (first term of the max in Theorem~\ref{thm:afa_sample}.2), where we have the collaboration speedup.
We present in Appendix~\ref{app:time_adaptive} a time-adaptive variant with varying step sizes and matrices, that share the same sample complexity, and Theorem~\ref{thm:afa_convex} in Appendix~\ref{app:afa_convex} is the case $\mu=0$.

\textbf{Assumption~\eqref{hyp:bias1}.} Importantly, controlling the mean of local generalization errors leads to a much weaker bias assumption: instead of requiring a uniform control of gradient norms (as is done in all previous works, and in our \emph{all-for-one} setting), our \emph{all-for-all} algorithm only requires that for some $(b_{ij})$, we have $f_i(x_j^\star)-f_i(x_i^\star)\leq b_{ij}$: in the GEM setting \emph{e.g.}, even when gradients of $f_i$ and $f_j$ may differ a lot, if agent $j$'s optimal model generalizes well-enough under agent $i$'s distribution, they should collaborate. We believe this notion of function proximity that we leverage in this setting to be the weakest possible.

\textbf{Intuition behind the algorithm.} Perhaps surprisingly, matrix $W$ is in general \emph{not} a gossip matrix (\emph{i.e.}~such that $W\mathds{1}=\mathds{1}$): agent $i$ does not aggregate a convex combination of stochastic gradients, but a combination with scalars that do not necessarily sum to 1. 
In the collaborative GEM setting, we thus cannot say that the \emph{all-for-all} algorithm acts as if, in parallel, each agent $i$ trains a model on the mixture of distributions $\cD_j$ with weights $W_{ij}$. In fact, as the analysis shows below, agent $i$ trains a model on the mixture of distributions, with weights $\lambda_{ij}$, if $\Lambda$ is a stochastic square root of matrix $W$ ($\Lambda\Lambda^\top=W$). 
Thus, the \emph{all-for-all} algorithm is exactly a paralleled version of weighted gradient averagings, where agent $i$ uses the stochastic vector $(\lambda_{ij})_{1\leq j\leq N}$. 
In order to account for inter-dependencies between agents that do not directly share information, the \emph{all-for-all gradient filtering} uses weights $W_{ij}$ to aggregate information, instead of $\lambda_{ij}$.
Propagating information using a matrix $W$, that induces a similarity graph $G_W$ on $\set{1,\ldots,N}$, such that $(ij)\in E_W$ if $W_{ij}>0$, is quite natural~\citep{perso_bellet_graph,pmlr-v84-bellet18a}; yet, ours is the first analysis to give such precise generalization error bounds, through the use of a stochastic optimization framework. 

\textbf{Degrees of freedom offered by $W$.} In comparison to Theorem~\ref{thm:afa}, the classical personalized FL approaches that consider personalized local models of the form $x_i=\Bar{x}-\delta_i$, where $\Bar{x}$ is some global quantity shared by all agents, perturbed (and personalized) by some local quantity $\delta_i$ (\emph{e.g.}~averaging between local and a global models), can be seen as the special instances where, for all $i$, we have $\lambda_{ii}=1-\alpha_i$ and $\lambda_{ij}=\frac{\alpha_i}{N-1}$ if $i\ne j$ for some $\alpha_i$, and leads to bias terms of the form $\frac{1}{N}\sum_i \frac{\alpha_i}{N-1}\sum_{j\ne i}b_{ij}$. Full and naive collaboration (a single model trained for all users) corresponds to $\lambda_{ij}=1/N$ for all $i,j$, and leads to a bias term of $\frac{1}{N^2}\sum_{i,j}b_{ij}$.
The degrees of freedom offered by our matrix $W$ (and by coefficients $\lambda_{ij}$) enable pairwise agent adaptation, and tighter generalization guarantees and bias/variance tradeoffs.

\begin{proof}[Proof sketch of convergence guarantees]
Since brutally analyzing convergence of the iterates $(x^k)$ generated with $x^{k+1}=x^k-WG^k$ seems impossible due to both gradient biases and model biases between agents, we study these iterates through the introduction of a different but related problem. This approach is in fact similar to some decentralized optimization ones, where a dual problem or a related energy function is often introduced \citep{scaman2019optimal,even2021continuized}, upon which well-studied algorithms are applied.
The related problem we formulate is different from and more flexible than all the different personalized FL problems in the literature \citep{hanzely_lower_2020,t_dinh_personalized_2020}, that consider regularization terms that enforce consensus.
For $\lambda=(\lambda_{ij})_{1\leq i,j\leq N}$ a stochastic matrix (such that for all $1\leq i\leq N$, we have $\sum_{j=1}^N\lambda_{ij}=1$), let $f^\Lambda$ be defined as:
\begin{equation}\label{eq:fLambda}
    f^\Lambda(y)=\Bar{f}(\Lambda y)\,,\quad y\in\R^{N\times d}\,,
\end{equation}
where $\Bar{f}=\frac{1}{N}\sum_i f_i$. Gradient descent on $f^\Lambda$ writes as  $$y^{k+1}=y^k-\eta \Lambda^\top \nabla\Bar{f}(\Lambda y^k)\,$$
where $\nabla \Bar{f}(x) = \frac{1}{N}\big(\nabla f_i(x_i)\big)_{1\leq i\leq N}$ for any $x\in\R^{N\times d}$. Importantly, notice that denoting $x^k=\Lambda y^k$ and since $W=\Lambda\Lambda^\top$, we have the recursion $$x^{k+1}=x^k-\eta W \nabla \Bar{f}(x^k)\,$$ making an analysis of the iterates $(x^k)$ possible. In our case, we however use stochastic gradients given by our oracles. The full gradient $\nabla \Bar{f}(x)$ is thus replaced by $(G^k(x_i)_i)_i$ defined at Equation~\eqref{eq:full_stoch}. Defining $(y^k)_k$ with the recursion:
    $$y^{k+1}=y_k-\eta ((\Lambda G^k(y^k)_i)_i\,,$$
initialized at $y^0_1=x^0_1=\ldots=y^0_N=x^0_N$ we have $x^k=\Lambda y^k$, for all $k\geq0$, where $(x^k)$ is generated using Algorithm~\ref{algo:afa}. As a consequence, controlling in Theorem~\ref{thm:afa} the function values $\frac{1}{N}\sum_if_i(x_i^k)$ is equivalent to controlling $f^\Lambda(y^k)$ (these two quantities are equal).
The bias-variance trade-off thus writes as, where $x^\Lambda$ minimizes $f^\Lambda$ and $x^\star=(x_i^\star)_{1\leq i\leq N}$:
\begin{equation*}
    F^k\leq \underbrace{f^\Lambda(x^\Lambda)-\Bar{f}(x^\star)}_\text{\emph{Bias term}} + \underbrace{f^\Lambda(y^k) - f^\Lambda(x^\Lambda)}_\text{\emph{Optimization and variance terms}}\,.
\end{equation*}
\end{proof}


\section{Collaborative GEM\label{sec:gem}}

In this section, we place ourselves in the GEM setting (loss function $\ell$, distributions $(\cD_i)_i$). We briefly elaborate (see Appendix~\ref{sec:discuss} for further details) on the collaborative GEM setting we used through the paper, clearly define the distribution-based distances related to bias assumptions~\eqref{hyp:bias1} and~\eqref{hyp:bias3}, and introduce toy problems under which the bias upperbounds may be known.

\subsection{Distribution-based distances.} For $\cH$ a set of functions from $\Xi$ to $\R^d$ and $\cD,\cD'$ two probability distributions on $\Xi$, we define:
\begin{equation*}
	d_\cH(\cD,\cD')=\sup_{h\in\cH}\NRM{\esp{h(\xi)-h(\xi')}}\,,
\end{equation*}
where $\xi\sim\cD$ and $\xi'\sim\cD'$. $d_\cH$ is a pseudo-distance on the set of probability measures on $\Xi$. The definition of $d_\cH$ is motivated by the fact that, for fixed $x\in\cX$ and $1\leq i,j\leq N$, we have:
\begin{equation*}
		\NRM{\nabla f_i(x)-\nabla f_j(x)}\leq d_\cH(\cD_i,\cD_j)\,,
\end{equation*}
if $\big(\xi\in\Xi\mapsto\nabla_x\ell(x,\xi)\big) \in \cH$ where $(\xi_i,\xi_j)\sim\cD_i\times \cD_j$. Thus, if for all $1\leq i,j\leq N$, $d_\cH(\cD_i,\cD_j)\leq \hat{b}_{ij}$ for some weights $\hat{b}_{ij}$,
bias assumption~\eqref{hyp:bias1} holds for $b_{ij}=\sup_{k,l}\NRM{x_k^\star-x_l^\star}\hat{b}_{ij}$ (and for $\hat{b}_{ij}^2/(2\mu)$ under $\mu$-PL assumption), and bias assumption~\eqref{hyp:bias2} holds for $\tilde{b}_{ij}=\hat{b}_{ij}^2$.

\subsection{Weakly supervised setting.} In a scenario where, from a large pool of unlabelled data (distributions $(\hat{\cD}_i)_i$), requiring a label led to a sample from $\cD_i$ but were costly, agents would benefit from computing distance-based distributions between unlabelled distributions $(\hat{\cD}_i)_i$ (possible thanks to a large amount of unlabelled samples); these computed distances would then help to reduce the number of labelled samples required.

\subsection{Geometric structure of the agents and infinitely-many-agents.} A toy problem is to assume that agent distributions are drawn in an \emph{i.i.d.}~fashion from a continuous set of possible distributions $\Theta\subset\R^p$:  $\theta_1,\ldots,\theta_N\sim\nu$ where $\nu$ is a density over the set $\Theta$. Making $N\to\infty$, we end up with a continuum of agents of distributions $(\cD_\theta)_{\theta\in\Theta}$ with density $\nu$. In that setting, under a bias assumption of the form $d_\cH(\cD_\theta,\cD_{\theta'})\leq \hat{b}(\theta,\theta')\leq \NRM{\theta-\theta'}^q$ for all $\theta,\theta'$, the collaboration speedup of Theorem~\ref{thm:afa_sample}.1 (for instance), writes as:
\begin{equation*}
            \int_\Theta \big(f_\theta(x_\theta)-f_\theta(x_\theta^\star)\big)\nu(\dd\theta)\leq \eps\,,
        \end{equation*} with a total number of samples drawn of
        \begin{equation*}
            T_\eps \leq \cO_{\eps\to0}\left(B^2D^2\eps^{-2-p/q}\right)\,,
        \end{equation*}
under regularity assumptions on $\nu$ and $\hat{b}$. This approach is made more rigorous in Appendix~\ref{sec:infinitely}.

\subsection{Illustration of our theory.}
\begin{figure}[b!]
\centering\vspace{-9pt}
\hfill
\subfigure[Comparison of \emph{All-for-all} with naive benchmarks]{
    \includegraphics[width=0.45\linewidth]{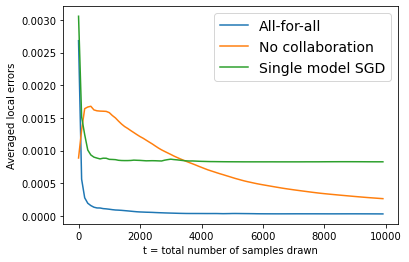}
    \label{fig:afa_comparison}
}
\hfill
\subfigure[Effect of noise in the estimation of bias parameters $b_{ij}$]{
    \includegraphics[width=0.45\linewidth]{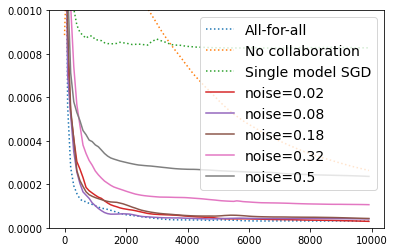}
    \label{fig:afa_noisy}
}
\hfill
\vspace{-14pt}
\caption{\textbf{All-for-all algorithm in practice}
\label{fig:afa}
}
\vspace{-9pt}
\end{figure}
To test the robustness of our algorithms and our theory, we use in our experiments the mean estimation problem used in the strongly convex and smooth lower bound of Theorem~\ref{thm:lower_afo}: $\ell(x,\xi)=\frac{1}{2}\NRM{x-\xi}^2$, $x\in\R^d$ for $\xi$ a $d$-dimensional Bernoulli random variable. We use the time-adaptive version of Algorithm~\ref{algo:afa}, and we place ourselves in the setting of Section~\ref{sec:infinitely}, where agent distributions are drawn from a distribution of distributions: for $N=100$ agents, we draw $(p_i)_{1\leq i\leq N}$ \emph{i.i.d.}~uniformly distributed in $[0,1]$, and $p_i$ is the parameter of agent $i$'s Bernoulli variables. We draw $10^3$ samples for each agent, and we compute for $1\leq t\leq T$ the averaged local generalization errors (here the error from the mean), namely $\frac{1}{N}\sum_{i=1}^N \frac{1}{2}\NRM{x_i^t-p_i}^2$, where $x_i^t$ is the output of the algorithm after $t$ samples drawn.
As expected and as illustrated in Figure~\ref{fig:afa_comparison}, our \emph{all-for-all} approach benefits from both \emph{no-collaboration} (each agent locally estimating its mean $p_i$ with only its locally available samples, in orange in the graph) and \emph{single-model} approaches (a fully centralized minimization of $\frac{1}{N}\sum_i f_i(x)$, in green in the graph), through both a convergence to the true mean, and a non-asymptotic acceleration.
In Figure~\ref{fig:afa_noisy}, we study the effect of noise on the estimation of bias parameters $b_{ij}$ (that here correspond to $b_{ij}=\frac{1}{2}(p_i-p_j)^2$), by taking as inputs in algorithms $b_{ij}^{\rm noisy}=\frac{1}{2}(p_i+n_i-p_j-n_j)^2$, where $n_i$ are \emph{i.i.d.}~uniformly distributed in $[-{\rm noise},{\rm noise}]$, for different noise values (Figure~\ref{fig:afa_noisy}). \emph{All-for-all} algorithm appears to be quite robust to noise: for small noise amplitudes (0.02 and 0.08, and even 0.18), performances are not too degraded. For (abusively) large noise values (0.32 and 0.5), the non-asymptotic speedup is kept, with degraded asymptotic performances. Still, in the range of parameters considered, these perform better than no-collaboration and single model approaches.

\section*{Discussion and conclusion.}
In this paper, we quantified in term of function biases ($b_{ij}$), stochastic gradient noise or amplitudes ($\sigma^2$ or $B^2$), target precision $\eps>0$ and functions regularity parameters, the benefit of collaboration between agents for shared minimization using stochastic gradient algorithms. 
Our lower bound (Theorem~\ref{thm:lower_afo}) states that in the~\emph{all-for-one} setting, assumption parameters being fixed, no algorithm matches the performances of weighted gradient averagings in terms of sample optimality. 
Another lower bound (and corresponding upper-bounds) that would be worth investigating is: without any knowledge on biases $b_{ij}$, what is the worst case complexity of an algorithm that would thus need to \emph{learn who to learn with}? 
We extended weighted gradient averagings to the \emph{all-for-all} setting, technically more challenging, through the introduction of a related problem, simplifying the analysis of such algorithms. 

The main limitation of our work lies in the assumption that upper-bounds on the biases $b_{ij}$ are known. We investigate in Section~\ref{sec:discuss} scenarii in which this assumption is valid. Yet, as mentionned above, \emph{learning who to learn with and how} is a challenging question worth tackling. Extending our work to model agnosticism together with data heterogeneity would require bias assumptions such as our distribution-based ones in Section~\ref{sec:discuss}. Finally, fairness issues might be raised by our approaches in the \emph{all-for-all} setting: we consider the average errors amongst agents and therefore do not ensure bounds on the supremum of all local errors. Still, incentives to collaborate and send gradients to other users are the hope of being helped back (matrix $W$ is symmetric in Algorithm~\ref{algo:afa}).

\bibliography{biblio}

\begin{thebibliography}{36}
\providecommand{\natexlab}[1]{#1}
\providecommand{\url}[1]{\texttt{#1}}
\expandafter\ifx\csname urlstyle\endcsname\relax
  \providecommand{\doi}[1]{doi: #1}\else
  \providecommand{\doi}{doi: \begingroup \urlstyle{rm}\Url}\fi

\bibitem[Agarwal et~al.(2012)Agarwal, Bartlett, Ravikumar, and
  Wainwright]{agarwal2012ITlowerbounds}
Alekh Agarwal, Peter~L. Bartlett, Pradeep Ravikumar, and Martin~J. Wainwright.
\newblock Information-theoretic lower bounds on the oracle complexity of
  stochastic convex optimization.
\newblock \emph{IEEE Transactions on Information Theory}, 58\penalty0
  (5):\penalty0 3235--3249, 2012.

\bibitem[Bauschke et~al.(2017)Bauschke, Bolte, and
  Teboulle]{bauschke2017relative}
Heinz~H. Bauschke, Jérôme Bolte, and Marc Teboulle.
\newblock A descent lemma beyond lipschitz gradient continuity: First-order
  methods revisited and applications.
\newblock \emph{Mathematics of Operations Research}, 42\penalty0 (2):\penalty0
  330--348, 2017.

\bibitem[Beaussart et~al.(2021)Beaussart, Grimberg, Hartley, and
  Jaggi]{beaussart_waffle_2021}
Martin Beaussart, Felix Grimberg, Mary-Anne Hartley, and Martin Jaggi.
\newblock {WAFFLE}: {Weighted} {Averaging} for {Personalized} {Federated}
  {Learning}.
\newblock \emph{arXiv:2110.06978 [cs]}, October 2021.

\bibitem[Bellet et~al.(2018)Bellet, Guerraoui, Taziki, and
  Tommasi]{pmlr-v84-bellet18a}
Aurélien Bellet, Rachid Guerraoui, Mahsa Taziki, and Marc Tommasi.
\newblock Personalized and private peer-to-peer machine learning.
\newblock In \emph{Proceedings of the Twenty-First International Conference on
  Artificial Intelligence and Statistics}, volume~84 of \emph{Proceedings of
  Machine Learning Research}, pages 473--481, 2018.

\bibitem[Bottou et~al.(2018)Bottou, Curtis, and Nocedal]{Bottou2018}
L{\'{e}}on Bottou, Frank~E. Curtis, and Jorge Nocedal.
\newblock Optimization methods for large-scale machine learning.
\newblock \emph{{SIAM} Review}, 60\penalty0 (2):\penalty0 223--311, January
  2018.

\bibitem[Chayti et~al.(2021)Chayti, Karimireddy, Stich, Flammarion, and
  Jaggi]{chayti2021linear}
El~Mahdi Chayti, Sai~Praneeth Karimireddy, Sebastian~U. Stich, Nicolas
  Flammarion, and Martin Jaggi.
\newblock Linear speedup in personalized collaborative learning, 2021.

\bibitem[Cheng et~al.(2021)Cheng, Chadha, and Duchi]{cheng_fine-tuning_2021}
Gary Cheng, Karan Chadha, and John Duchi.
\newblock Fine-tuning is {Fine} in {Federated} {Learning}.
\newblock \emph{arXiv:2108.07313 [cs, math, stat]}, August 2021.

\bibitem[Cover and Thomas(2005)]{information_theory}
Thomas~M. Cover and Joy~A. Thomas.
\newblock \emph{Elements of Information Theory}.
\newblock Wiley, April 2005.

\bibitem[Deng et~al.(2020)Deng, Kamani, and Mahdavi]{deng_adaptive_2020}
Yuyang Deng, Mohammad~Mahdi Kamani, and Mehrdad Mahdavi.
\newblock Adaptive {Personalized} {Federated} {Learning}.
\newblock \emph{arXiv:2003.13461 [cs, stat]}, November 2020.
\newblock arXiv: 2003.13461.

\bibitem[Donahue and Kleinberg(2021{\natexlab{a}})]{donahue_model-sharing_2020}
Kate Donahue and Jon Kleinberg.
\newblock Model-sharing games: Analyzing federated learning under voluntary
  participation.
\newblock \emph{Proceedings of the AAAI Conference on Artificial Intelligence},
  35\penalty0 (6):\penalty0 5303--5311, May 2021{\natexlab{a}}.

\bibitem[Donahue and Kleinberg(2021{\natexlab{b}})]{donahue_optimality_2021}
Kate Donahue and Jon Kleinberg.
\newblock Optimality and stability in federated learning: A game-theoretic
  approach.
\newblock In \emph{Advances in Neural Information Processing Systems},
  2021{\natexlab{b}}.

\bibitem[Dragomir et~al.(2021)Dragomir, Even, and
  Hendrikx]{pmlr-v139-dragomir21a}
Radu~Alexandru Dragomir, Mathieu Even, and Hadrien Hendrikx.
\newblock Fast stochastic bregman gradient methods: Sharp analysis and variance
  reduction.
\newblock In \emph{Proceedings of the 38th International Conference on Machine
  Learning}, volume 139 of \emph{Proceedings of Machine Learning Research},
  pages 2815--2825. PMLR, 18--24 Jul 2021.

\bibitem[Duchi and Rogers(2019)]{DuchR19}
John Duchi and Ryan Rogers.
\newblock Lower bounds for locally private estimation via communication
  complexity.
\newblock In \emph{Proceedings of the Thirty-Second Conference on Learning
  Theory}, volume~99 of \emph{Proceedings of Machine Learning Research}, pages
  1161--1191, Phoenix, USA, June 2019. PMLR.

\bibitem[Duchi and Wainwright(2013)]{duchi2013distancebased}
John~C. Duchi and Martin~J. Wainwright.
\newblock Distance-based and continuum fano inequalities with applications to
  statistical estimation.
\newblock 2013.

\bibitem[Even and Massoulie(2021)]{even2021tensors}
Mathieu Even and Laurent Massoulie.
\newblock Concentration of non-isotropic random tensors with applications to
  learning and empirical risk minimization.
\newblock In Mikhail Belkin and Samory Kpotufe, editors, \emph{Proceedings of
  Thirty Fourth Conference on Learning Theory}, volume 134 of \emph{Proceedings
  of Machine Learning Research}, pages 1847--1886. PMLR, 15--19 Aug 2021.
\newblock URL \url{https://proceedings.mlr.press/v134/even21a.html}.

\bibitem[Even et~al.(2021)Even, Berthier, Bach, Flammarion, Hendrikx, Gaillard,
  Massouli{\'e}, and Taylor]{even2021continuized}
Mathieu Even, Rapha{\"e}l Berthier, Francis Bach, Nicolas Flammarion, Hadrien
  Hendrikx, Pierre Gaillard, Laurent Massouli{\'e}, and Adrien Taylor.
\newblock Continuized accelerations of deterministic and stochastic gradient
  descents, and of gossip algorithms.
\newblock In A.~Beygelzimer, Y.~Dauphin, P.~Liang, and J.~Wortman Vaughan,
  editors, \emph{Advances in Neural Information Processing Systems}, 2021.
\newblock URL \url{https://openreview.net/forum?id=bGfDnD7xo-v}.

\bibitem[Fallah et~al.(2020)Fallah, Mokhtari, and
  Ozdaglar]{fallah_personalized_2020}
Alireza Fallah, Aryan Mokhtari, and Asuman Ozdaglar.
\newblock Personalized {Federated} {Learning} with {Theoretical} {Guarantees}:
  {A} {Model}-{Agnostic} {Meta}-{Learning} {Approach}.
\newblock \emph{Advances in Neural Information Processing Systems},
  33:\penalty0 3557--3568, 2020.

\bibitem[Grimberg et~al.(2021)Grimberg, Hartley, Karimireddy, and
  Jaggi]{grimberg_optimal_2021}
Felix Grimberg, Mary-Anne Hartley, Sai~P. Karimireddy, and Martin Jaggi.
\newblock Optimal {Model} {Averaging}: {Towards} {Personalized} {Collaborative}
  {Learning}.
\newblock \emph{arXiv:2110.12946 [cs, stat]}, October 2021.
\newblock arXiv: 2110.12946.

\bibitem[Hanzely et~al.(2020)Hanzely, Hanzely, Horváth, and
  Richtarik]{hanzely_lower_2020}
Filip Hanzely, Slavomír Hanzely, Samuel Horváth, and Peter Richtarik.
\newblock Lower {Bounds} and {Optimal} {Algorithms} for {Personalized}
  {Federated} {Learning}.
\newblock In \emph{Advances in {Neural} {Information} {Processing} {Systems}},
  volume~33, pages 2304--2315. Curran Associates, Inc., 2020.

\bibitem[Kairouz et~al.(2019)Kairouz, McMahan, Avent, Bellet, Bennis, Bhagoji,
  Bonawitz, Charles, Cormode, Cummings, D'Oliveira, Rouayheb, Evans, Gardner,
  Garrett, Gascón, Ghazi, Gibbons, Gruteser, Harchaoui, He, He, Huo,
  Hutchinson, Hsu, Jaggi, Javidi, Joshi, Khodak, Konečný, Korolova,
  Koushanfar, Koyejo, Lepoint, Liu, Mittal, Mohri, Nock, Özgür, Pagh,
  Raykova, Qi, Ramage, Raskar, Song, Song, Stich, Sun, Suresh, Tramèr,
  Vepakomma, Wang, Xiong, Xu, Yang, Yu, Yu, and Zhao]{kairouz_advances_2019}
Peter Kairouz, H.~Brendan McMahan, Brendan Avent, Aurélien Bellet, Mehdi
  Bennis, Arjun~Nitin Bhagoji, Keith Bonawitz, Zachary Charles, Graham Cormode,
  Rachel Cummings, Rafael G.~L. D'Oliveira, Salim~El Rouayheb, David Evans,
  Josh Gardner, Zachary Garrett, Adrià Gascón, Badih Ghazi, Phillip~B.
  Gibbons, Marco Gruteser, Zaid Harchaoui, Chaoyang He, Lie He, Zhouyuan Huo,
  Ben Hutchinson, Justin Hsu, Martin Jaggi, Tara Javidi, Gauri Joshi, Mikhail
  Khodak, Jakub Konečný, Aleksandra Korolova, Farinaz Koushanfar, Sanmi
  Koyejo, Tancrède Lepoint, Yang Liu, Prateek Mittal, Mehryar Mohri, Richard
  Nock, Ayfer Özgür, Rasmus Pagh, Mariana Raykova, Hang Qi, Daniel Ramage,
  Ramesh Raskar, Dawn Song, Weikang Song, Sebastian~U. Stich, Ziteng Sun,
  Ananda~Theertha Suresh, Florian Tramèr, Praneeth Vepakomma, Jianyu Wang,
  Li~Xiong, Zheng Xu, Qiang Yang, Felix~X. Yu, Han Yu, and Sen Zhao.
\newblock Advances and {Open} {Problems} in {Federated} {Learning}.
\newblock \emph{arXiv:1912.04977 [cs, stat]}, December 2019.
\newblock arXiv: 1912.04977.

\bibitem[Karimireddy et~al.(2020)Karimireddy, Kale, Mohri, Reddi, Stich, and
  Suresh]{karimireddy_scaffold_2020}
Sai~Praneeth Karimireddy, Satyen Kale, Mehryar Mohri, Sashank Reddi, Sebastian
  Stich, and Ananda~Theertha Suresh.
\newblock {SCAFFOLD}: {Stochastic} {Controlled} {Averaging} for {Federated}
  {Learning}.
\newblock In \emph{Proceedings of the 37th {International} {Conference} on
  {Machine} {Learning}}, pages 5132--5143. PMLR, November 2020.
\newblock ISSN: 2640-3498.

\bibitem[Khodak et~al.(2019)Khodak, Balcan, and
  Talwalkar]{khodak_adaptive_2019}
Mikhail Khodak, Maria-Florina~F Balcan, and Ameet~S Talwalkar.
\newblock Adaptive {Gradient}-{Based} {Meta}-{Learning} {Methods}.
\newblock In \emph{Advances in {Neural} {Information} {Processing} {Systems}},
  volume~32. Curran Associates, Inc., 2019.

\bibitem[Konečný et~al.(2016)Konečný, McMahan, Ramage, and
  Richtárik]{konecny_federated_2016}
Jakub Konečný, H.~Brendan McMahan, Daniel Ramage, and Peter Richtárik.
\newblock Federated {Optimization}: {Distributed} {Machine} {Learning} for
  {On}-{Device} {Intelligence}.
\newblock \emph{arXiv:1610.02527 [cs]}, October 2016.
\newblock arXiv: 1610.02527.

\bibitem[Kulkarni et~al.(2020)Kulkarni, Kulkarni, and
  Pant]{kulkarni_survey_2020}
Viraj Kulkarni, Milind Kulkarni, and Aniruddha Pant.
\newblock Survey of {Personalization} {Techniques} for {Federated} {Learning}.
\newblock \emph{arXiv:2003.08673 [cs, stat]}, March 2020.
\newblock arXiv: 2003.08673.

\bibitem[Li et~al.(2020)Li, Sanjabi, Beirami, and Smith]{li_fair_2020}
Tian Li, Maziar Sanjabi, Ahmad Beirami, and Virginia Smith.
\newblock Fair resource allocation in federated learning.
\newblock In \emph{International Conference on Learning Representations}, 2020.

\bibitem[Mansour et~al.(2020)Mansour, Mohri, Ro, and
  Suresh]{mansour_three_2020}
Yishay Mansour, Mehryar Mohri, Jae Ro, and Ananda~Theertha Suresh.
\newblock Three {Approaches} for {Personalization} with {Applications} to
  {Federated} {Learning}.
\newblock \emph{arXiv:2002.10619 [cs, stat]}, July 2020.
\newblock arXiv: 2002.10619.

\bibitem[McMahan et~al.(2017)McMahan, Moore, Ramage, Hampson, and
  Arcas]{mcmahan2017fl}
Brendan McMahan, Eider Moore, Daniel Ramage, Seth Hampson, and Blaise Aguera~y
  Arcas.
\newblock {Communication-Efficient Learning of Deep Networks from Decentralized
  Data}.
\newblock In Aarti Singh and Jerry Zhu, editors, \emph{Proceedings of the 20th
  International Conference on Artificial Intelligence and Statistics},
  volume~54 of \emph{Proceedings of Machine Learning Research}, pages
  1273--1282. PMLR, 20--22 Apr 2017.

\bibitem[Mohri et~al.(2019)Mohri, Sivek, and Suresh]{mohri_agnostic_2019}
Mehryar Mohri, Gary Sivek, and Ananda~Theertha Suresh.
\newblock Agnostic {Federated} {Learning}.
\newblock In \emph{Proceedings of the 36th {International} {Conference} on
  {Machine} {Learning}}, pages 4615--4625. PMLR, May 2019.
\newblock ISSN: 2640-3498.

\bibitem[Nedich et~al.(2018)Nedich, Olshevsky, and Rabbat]{nedic_network_2018}
Angelia Nedich, Alex Olshevsky, and {Michael G.} Rabbat.
\newblock Network topology and communication-computation tradeoffs in
  decentralized optimization.
\newblock \emph{Proceedings of the IEEE}, 106\penalty0 (5):\penalty0 953--976,
  May 2018.

\bibitem[Scaman et~al.(2019)Scaman, Bach, Bubeck, Lee, and
  Massouli{{\'e}}]{scaman2019optimal}
Kevin Scaman, Francis Bach, S{{\'e}}bastien Bubeck, Yin~Tat Lee, and Laurent
  Massouli{{\'e}}.
\newblock Optimal convergence rates for convex distributed optimization in
  networks.
\newblock \emph{Journal of Machine Learning Research}, 20\penalty0
  (159):\penalty0 1--31, 2019.

\bibitem[T.~Dinh et~al.(2020)T.~Dinh, Tran, and
  Nguyen]{t_dinh_personalized_2020}
Canh T.~Dinh, Nguyen Tran, and Josh Nguyen.
\newblock Personalized {Federated} {Learning} with {Moreau} {Envelopes}.
\newblock \emph{Advances in Neural Information Processing Systems},
  33:\penalty0 21394--21405, 2020.

\bibitem[Tripuraneni et~al.(2020)Tripuraneni, Jordan, and
  Jin]{tripuraneni_theory_2020}
Nilesh Tripuraneni, Michael Jordan, and Chi Jin.
\newblock On the theory of transfer learning: The importance of task diversity.
\newblock In H.~Larochelle, M.~Ranzato, R.~Hadsell, M.~F. Balcan, and H.~Lin,
  editors, \emph{Advances in Neural Information Processing Systems}, volume~33,
  pages 7852--7862. Curran Associates, Inc., 2020.

\bibitem[Vanhaesebrouck et~al.(2017)Vanhaesebrouck, Bellet, and
  Tommasi]{perso_bellet_graph}
Paul Vanhaesebrouck, Aurélien Bellet, and Marc Tommasi.
\newblock {Decentralized Collaborative Learning of Personalized Models over
  Networks}.
\newblock In Aarti Singh and Jerry Zhu, editors, \emph{Proceedings of the 20th
  International Conference on Artificial Intelligence and Statistics},
  volume~54 of \emph{Proceedings of Machine Learning Research}, pages 509--517.
  PMLR, 20--22 Apr 2017.

\bibitem[Wang et~al.(2019)Wang, Mathews, Kiddon, Eichner, Beaufays, and
  Ramage]{wang_federated_2019}
Kangkang Wang, Rajiv Mathews, Chloé Kiddon, Hubert Eichner, Françoise
  Beaufays, and Daniel Ramage.
\newblock Federated {Evaluation} of {On}-device {Personalization}.
\newblock \emph{arXiv:1910.10252 [cs, stat]}, October 2019.
\newblock arXiv: 1910.10252.

\bibitem[Yu et~al.(2021)Yu, Bagdasaryan, and Shmatikov]{yu_salvaging_2021}
Tao Yu, Eugene Bagdasaryan, and Vitaly Shmatikov.
\newblock Salvaging {Federated} {Learning} by {Local} {Adaptation}.
\newblock \emph{arXiv:2002.04758 [cs, stat]}, October 2021.
\newblock arXiv: 2002.04758.

\bibitem[Zhang et~al.(2013)Zhang, Duchi, Jordan, and
  Wainwright]{zhang_information-theoretic_2013}
Yuchen Zhang, John Duchi, Michael~I Jordan, and Martin~J Wainwright.
\newblock Information-theoretic lower bounds for distributed statistical
  estimation with communication constraints.
\newblock In C.~J.~C. Burges, L.~Bottou, M.~Welling, Z.~Ghahramani, and K.~Q.
  Weinberger, editors, \emph{Advances in {Neural} {Information} {Processing}
  {Systems}}, volume~26. Curran Associates, Inc., 2013.

\end{thebibliography}
\bibliographystyle{plainnat}

\onecolumn
\appendix

\section{The Case of Collaborative Generalization Error Minimization (GEM) \label{sec:discuss}}

In most of the related works and in this paper, the quantities equivalent to our bias assumptions are assumed to be known by the optimizer. Yet, while this assumption makes analyses possible, the knowledge of these quantities is a strong assumption. In this section, we provide settings and toy problems under which these assumptions are natural.
We first begin by introducing distribution based distances between agents, in order to control gradient biases in the setting where functions are of the form~\eqref{eq:function_loss} for a shared loss $\ell$ and local distributions $(\cD_i)_i$, for collaborative GEM. We then present two settings for this problem.

\subsection{Distribution-based distances}

In order to quantify the bias/variance tradeoff that appears when using stochastic gradients computed by other agents, we define the following notion of distance.
\begin{definition}\label{def:dH}
For $\cH$ a set of functions from $\Xi$ to $\R^d$ and $\cD,\cD'$ two probability distributions on $\Xi$, we define:
\begin{equation*}
	d_\cH(\cD,\cD')=\sup_{h\in\cH}\NRM{\esp{h(\xi)-h(\xi')}}\,,
\end{equation*}
where $\xi\sim\cD$ and $\xi'\sim\cD'$. $d_\cH$ is a pseudo-distance on the set of probability measures on $\Xi$.
\end{definition}
Some instances of these distances encompass:
\begin{enumerate}
    \item For $\cH=\cH_{\rm Affine}$ the set of 1-Lipschitz affine functions, we have $d_\cH(\cD,\cD')=\NRM{\esp{\xi}-\esp{\xi'}}$.
    \item For $\cH=\cH_{\rm Lipschitz}$ the set of 1-Lipschitz functions on $\Xi$, we have $d_\cH(\cD,\cD')=\cW_1(\cD,\cD')$ the 1-Wasserstein distance (or EMD, earth mover distance).
    \item For $\cH=\cH_{\rm Bounded}$ the set of of functions whose values lie in a space of diameter 1, we have $d_\cH(\cD,\cD')=d_{\rm TV}(\cD,\cD')$ the total variation distance.
    \item For $\cH=\cH_{\rm Loc. Bounded}$ defined as $\set{h:\Xi\to\R^d\quad \text{s.t.}\quad \forall \xi\in\Xi,\, {\rm Diam}\big(h(\cB(\xi,1)\cap\Xi)\big) \leq 1 }$, we also obtain the total variation distance.
\end{enumerate}
The definition of $d_\cH$ is motivated by the fact that, for fixed $x\in\cX$ and $1\leq i,j\leq N$, we have:
\begin{equation*}\label{eq:nabla_H}
	\begin{aligned}
		\NRM{\nabla f_i(x)-\nabla f_j(x)}&=\NRM{\esp{\nabla_x\ell(x,\xi_i)-\nabla_x\ell(x,\xi_j)}}\\
		&\leq d_\cH(\cD_i,\cD_j)\,.
	\end{aligned}
\end{equation*}
if $\big(\xi\in\Xi\mapsto\nabla_x\ell(x,\xi)\big) \in \cH$ where $(\xi_i,\xi_j)\sim\cD_i\times \cD_j$.

We thus add a fourth bias assumptions to our list in Section~\ref{sec:hyp}, called distribution-based bias assumption: for some function set $\cH$, for all $1\leq i,j\leq N$, for all $x\in\R^d$, $d_\cH(\cD_i,\cD_j)\leq \hat{b}_{ij}$ for some non negative weights $\hat{b}$, and for all $x\in\R^d$, $\big(\xi\in\Xi\mapsto\nabla_x\ell(x,\xi)\big) \in \cH$.
This assumptions is verified in the following settings for mean estimation and regression.
\begin{enumerate}
    \item For $\ell(x,\xi)=\frac{1}{2}(x-\xi)^2$ (mean estimation), distribution-based bias assumption holds for $\cH_{\rm Affine}$, and the distance to consider is thus $\NRM{\esp{\xi_i}-\esp{\xi_j}}$.
    \item For quadratic loss (linear regression) $\ell(x,\xi)=\frac{1}{2}(a^\top x-b)^2$ where $\xi=(aa^\top, ba^\top)$, we have $\cH= D \cH_{\rm Affine}$, where $D$ is the diameter of the space in which are iterates lie, and the distance to consider is thus a scaled 1-Wassertein distance between distributions 
    \item For logisic regression $\ell(x,\xi)=\log(1+e^{-b a^\top x})$ where $\xi=ba^\top$, we have $\cH=(1+\frac{1}{e})\cH_{\rm Lipschitz}$ and the distance to consider is thus the 1-Wassertein distance between distributions (scaled with a constant factor).
    \item For the hinge loss $\ell(x,\xi)=\max(0,1-b a^\top x)$, for $\xi=ba^\top$ we have $\cH=D\cH_{\rm Bounded}$ where $D$ is the diameter of the space in which are iterates lie, and the distance to consider between distributions is thus a scaled total variation distance.
\end{enumerate}

\noindent Thus, instead of controlling uniform concentration bounds on the gradients (under the GEM assumption, this can be done using anisotropic uniform concentration bounds, \emph{e.g.}~\citet{even2021tensors}), one can leverage our bias assumptions by controlling distribution-based distances.

\begin{proposition}
    If for all $1\leq i,j\leq N$, $d_\cH(\cD_i,\cD_j)\leq \hat{b}_{ij}$ (\emph{i.e.}~distribution-based bias assumption holds), then:
    \begin{enumerate}
        \item Bias assumption~\eqref{hyp:bias1} holds for $b_{ij}=D^\star\hat{b}_{ij}$;
        \item Bias assumption~\eqref{hyp:bias2} holds for $\tilde{b}_{ij}=\hat{b}_{ij}^2$.
    \end{enumerate}
\end{proposition}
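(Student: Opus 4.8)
The plan is to derive both bias assumptions directly from the fundamental inequality $\NRM{\nabla f_i(x)-\nabla f_j(x)}\leq d_\cH(\cD_i,\cD_j)$ established just above the statement, combined with the first-order optimality conditions $\nabla f_i(x_i^\star)=\nabla f_j(x_j^\star)=0$. Both parts are short consequences of these two ingredients, so the ``proof'' is really a matter of assembling them correctly and identifying the constant $D^\star$.

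For the second claim I would argue as follows. In the GEM setting the local functions are differentiable with $\esp{g_i^k(x)}=\nabla f_i(x)$, so the left-hand side of \eqref{eq:hyp:bias2} equals $\NRM{\nabla f_i(x)-\nabla f_j(x)}^2$. Applying the fundamental inequality and then the distribution-based hypothesis $d_\cH(\cD_i,\cD_j)\leq\hat{b}_{ij}$ gives $\NRM{\nabla f_i(x)-\nabla f_j(x)}^2\leq d_\cH(\cD_i,\cD_j)^2\leq\hat{b}_{ij}^2$ for every $x\in\R^d$, which is exactly \eqref{hyp:bias2} with $\tilde{b}_{ij}=\hat{b}_{ij}^2$. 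This step is immediate.

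For the first claim the idea is to convert a function-value gap into a gradient-norm bound. By convexity of $f_i$ and Cauchy--Schwarz,
\begin{equation*}
f_i(x_j^\star)-f_i(x_i^\star)\leq\langle\nabla f_i(x_j^\star),\,x_j^\star-x_i^\star\rangle\leq\NRM{\nabla f_i(x_j^\star)}\,\NRM{x_j^\star-x_i^\star}\,.
\end{equation*}
It then remains to control $\NRM{\nabla f_i(x_j^\star)}$. Here I would use that $x_j^\star$ minimizes $f_j$, so $\nabla f_j(x_j^\star)=0$, whence $\NRM{\nabla f_i(x_j^\star)}=\NRM{\nabla f_i(x_j^\star)-\nabla f_j(x_j^\star)}\leq d_\cH(\cD_i,\cD_j)\leq\hat{b}_{ij}$ by the fundamental inequality evaluated at $x=x_j^\star$. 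Combining, $f_i(x_j^\star)-f_i(x_i^\star)\leq\hat{b}_{ij}\NRM{x_j^\star-x_i^\star}\leq D^\star\hat{b}_{ij}$ with $D^\star=\sup_{k,l}\NRM{x_k^\star-x_l^\star}$, which is \eqref{hyp:bias1}.

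The only genuinely delicate point is the step bounding $\NRM{\nabla f_i(x_j^\star)}$: it crucially exploits that the gradient difference $\nabla f_i-\nabla f_j$ is controlled \emph{uniformly} in the argument, so that evaluating at the foreign minimizer $x_j^\star$ — where $\nabla f_j$ vanishes — turns the gradient of $f_i$ into a pure discrepancy term. Everything else is convexity plus Cauchy--Schwarz. I would also note that under a $\mu$-PL or strong-convexity assumption one sharpens the first claim to $b_{ij}=\hat{b}_{ij}^2/(2\mu)$ by replacing the Cauchy--Schwarz bound with the quadratic growth inequality $f_i(x_j^\star)-f_i(x_i^\star)\leq\frac{1}{2\mu}\NRM{\nabla f_i(x_j^\star)}^2$, exactly as recorded earlier in the paper.
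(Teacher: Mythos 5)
Your proof is correct and follows exactly the route the paper intends: part 2 is the fundamental inequality $\NRM{\nabla f_i(x)-\nabla f_j(x)}\leq d_\cH(\cD_i,\cD_j)$ squared, and part 1 combines convexity with Cauchy--Schwarz, the optimality condition $\nabla f_j(x_j^\star)=0$, and the same inequality evaluated at $x_j^\star$, with $D^\star=\sup_{k,l}\NRM{x_k^\star-x_l^\star}$ — precisely the chain $f_i(x_j^\star)-f_i(x_i^\star)\leq\NRM{x_i^\star-x_j^\star}\NRM{\nabla f_i(x_j^\star)}$ recorded in Section~2 of the paper. Your closing remark on the $\mu$-PL sharpening to $\hat{b}_{ij}^2/(2\mu)$ also matches the paper's parenthetical claim.
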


\subsection{Lower-bounds (Theorem~\ref{thm:lower_afo}) in term of distribution based distances}

Theorem~\ref{thm:lower_afo} being formulated as IT-lower bounds for collaborative GEM, the distribution-based distances we introduced perfectly adapt to it. In fact, in the proof of the lower bounds, the distributions we build verify $d_\cH(\cD_i,\cD_j)\leq b_{ij}$ if $\cH \subset \cH_{Loc. Bounded}$, so that the following directly holds.
\begin{theorem}\label{thm:lower_distrib}Let $\eps\in(0,1/16)$. 
	Assume that the function set $\cH$ satisfies:
\begin{equation*}
	\cH_{\rm Affine} \subset \cH \subset \cH_{Loc. Bounded}\,,
\end{equation*}
	and assume that either the synchronous or asynchronous oracle is used.
	Assume that $(b_{ij})$ verifies the triangle inequality $b_{ij}\leq b_{ik}+b_{kj}$ for all $1\leq i,j,k\leq N$. 
	Let $\Tilde{\D}(r,b,B)$ be the set $\D(r,b,B)$, but where the bias assumptions are replaced by the distribution-based bias assumption ($d_\cH(\cD_i,\cD_j)\leq b_{ij}$), and similarly for $\Tilde{\D}_{\mu=1/r^2}^{L=1/r^2}(r,b,\sigma)$.
	We have, for some constant $C>0$ independent of the problem and any $i\in\{1,\ldots,N\}$:
	\begin{equation*}
		\inf_{\cM\in\M}\!\sup_{((\cD_j)_j,\ell)\in\Tilde{\D}(r,b,B)}\!\!\!\!\!\! \cT_i^{\eps}\Big(\cM,\big((\cD_j)_j,\ell\big)\Big) \!\geq\! \frac{CB^2r^2N}{\eps^{2}\cN_i^\eps(\frac{b}{4})}\,,
	\end{equation*}
	where $\cN_i^\eps(b)=\sum_j \mathds{1}_{\{b_{ij}\leq \eps\}}$ is the number of agents $j$ verifying $b_{ij}\leq \eps$.
	Under strong-convexity and smoothness assumptions for $\mu=L=1/r^2$, this bound becomes, for some constant $C'>0$:
	\begin{equation*}
		\inf_{\cM\in\M}\!\sup_{((\cD_j)_j,\ell)\in\Tilde{\D}_{\mu=1/r^2}^{L=1/r^2}(r,b,\sigma)}\!\!\!\!\!\! \cT_i^{\eps}\Big(\cM,\big((\cD_j)_j,\ell\big)\Big) \geq \frac{C'r^2\sigma^2N}{\eps\cN_i^\eps(\frac{b}{4})}\,.
	\end{equation*}
\end{theorem}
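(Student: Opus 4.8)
The plan is to re-use verbatim the hard instances constructed in the proof of Theorem~\ref{thm:lower_afo} (Appendix~\ref{app:lower}) and merely to check that they also belong to the constraint sets $\Tilde{\D}(r,b,B)$ and $\Tilde{\D}_{\mu=1/r^2}^{L=1/r^2}(r,b,\sigma)$ defined through the distribution-based bias assumption. Since $\Tilde{\D}(r,b,B)$ is obtained from $\D(r,b,B)$ \emph{only} by swapping the bias assumptions~\eqref{hyp:bias1}--\eqref{hyp:bias2} for the single requirement $d_\cH(\cD_i,\cD_j)\le b_{ij}$ — the norm constraint $\NRM{x_i^\star}\le r$ and the noise assumption~\eqref{hyp:noise2} being left untouched — it suffices to verify this one inequality for the distributions built in the lower-bound proof. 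Once this is done, those instances lie in $\Tilde{\D}(r,b,B)$, so that $\inf_{\cM}\sup_{\Tilde{\D}(r,b,B)}\cT_i^\eps$ is at least $\cT_i^\eps$ evaluated on them, which is exactly the right-hand side already established in Theorem~\ref{thm:lower_afo}; the strongly convex and smooth case is handled identically with $\Tilde{\D}_{\mu=1/r^2}^{L=1/r^2}$.

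First I would extract the two order relations produced by the sandwich hypothesis $\cH_{\rm Affine}\subset\cH\subset\cH_{\rm Loc. Bounded}$. The inclusion $\cH_{\rm Affine}\subset\cH$ guarantees well-posedness of the distribution-based bias assumption: the hard loss is mean estimation, $\ell(x,\xi)=\tfrac12\NRM{x-\xi}^2$, whose gradient $\xi\mapsto x-\xi$ is $1$-Lipschitz and affine, hence an element of $\cH_{\rm Affine}\subset\cH$. This also yields the lower estimate $d_{\cH_{\rm Affine}}\le d_\cH$. The inclusion $\cH\subset\cH_{\rm Loc. Bounded}$ gives, as recorded after Definition~\ref{def:dH}, the upper estimate $d_\cH(\cD,\cD')\le d_{\rm TV}(\cD,\cD')$. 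Consequently the target inequality $d_\cH(\cD_i,\cD_j)\le b_{ij}$ is implied, \emph{for every admissible} $\cH$, by the single total-variation bound $d_{\rm TV}(\cD_i,\cD_j)\le b_{ij}$, which thus becomes the only quantity to control.

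The genuinely new and hardest step is therefore to establish $d_{\rm TV}(\cD_i,\cD_j)\le b_{ij}$ for the distributions exhibited in the proof of Theorem~\ref{thm:lower_afo}. This is the main obstacle because the bias assumptions used there only constrain the mean discrepancy $\NRM{\esp{\xi_i}-\esp{\xi_j}}$ (via~\eqref{hyp:bias2}) and the excess-loss gap (via~\eqref{hyp:bias1}), whereas total variation is an \emph{a priori} stronger, information-theoretic quantity. The point I would exploit is that the Fano-type packing underlying the lower bound perturbs a common base distribution by a small amount of probability mass, and that it is precisely this perturbation mass — not the induced mean shift — that drives the indistinguishability argument \emph{and} equals the total-variation distance; moving this mass across a well-separated support decouples $d_{\rm TV}$ from the mean displacement, so the minimizer separation needed for the $\eps^{-2}$ (resp.\ $\eps^{-1}$) rate remains compatible with $d_{\rm TV}(\cD_i,\cD_j)\le b_{ij}$. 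I would read this bound off coordinatewise for the explicit Bernoulli-type families used in the two regimes. Assembling the three ingredients — well-posedness and $d_\cH\le d_{\rm TV}$ from the sandwich hypothesis, the total-variation estimate from the construction, and the inherited triangle-inequality hypothesis on $(b_{ij})$ together with the unchanged counting function $\cN_i^\eps(b/4)$ — then gives membership in $\Tilde{\D}(r,b,B)$ and $\Tilde{\D}_{\mu=1/r^2}^{L=1/r^2}(r,b,\sigma)$, and the two displayed lower bounds follow with the same constants $C,C'$ as in Theorem~\ref{thm:lower_afo}.
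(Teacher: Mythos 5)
Your proposal follows essentially the same route as the paper: reuse the hard instances from the proof of Theorem~\ref{thm:lower_afo}, use $\cH\subset\cH_{\rm Loc.\ Bounded}$ to reduce $d_\cH(\cD_i,\cD_j)\le b_{ij}$ to a total-variation bound on the Bernoulli families (which gives $d_\cH(\cD_i,\cD_j)\le|\delta_i-\delta_j|$, bounded via the triangle inequality on $(b_{ij})$), and use $\cH_{\rm Affine}\subset\cH$ to ensure the gradient maps $\xi\mapsto\nabla_x\ell(x,\xi)$ lie in $\cH$. One minor slip: the hard loss in the first (convex, bounded-gradient) regime is the piecewise-linear construction~\eqref{eq:difficult_function}, not mean estimation — mean estimation is only used in the strongly convex regime — but the membership check is just as immediate there since the gradient is again affine in $\xi$.
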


\subsection{Weak supervision}

We explicit in this section another paradigm in which the knowledge of some distribution-based bias assumptions is realistic, in a weakly supervised learning setting. More precisely, assume that the desired task is a classification or regression one: random variables $\xi$ are of the form $\xi=(\zeta,\beta)$ for some $\zeta\in Z\subset\R^{D_1}$ and some \emph{label} $\beta\in\R^{D_2}$.
For every agent $1\leq i\leq N$, let $\cD_i^\zeta$ be the $\zeta$-marginal of $\cD_i$.
In our weakly supervised learning setting, we assume that there exists some (eventually random) function $\cL$ that maps unlabelled data $\zeta$ to their labelled ones \emph{i.e.}~for some $\cL:Z\to\Xi$, the random variable $\cL(\zeta_i)$ is of same law as $\xi_i\sim\cD_i$ for $\zeta_i\sim\cD_i^\zeta$.
While many samples may be accessible and drawn from $\cD_i^\zeta$ for every agent $1\leq i\leq N$, labelling data \emph{i.e.}~applying function $\cL$ in order to recover samples of law $\cD_i$ is assumed to be costly. For instance, applying $\cL$ in medical data analysis could require the help form an expert (a radiologist, \ldots).

Agents would benefit from computing distances between distributions $\cD_i$ using marginals $\cD_i^\zeta$: under mild assumptions on the function $\cL$ (Lipschitzness with high probability, mainly), distances $d_\cH(\cD_i,\cD_j)$ are upperbounded by distances between $\cD_i^\zeta$ and $\cD_j^\zeta$.
Thus, in this setting, agents would benefit from collaborating: computing distribution-based distances would not require expert advices, while reducing the total number of expert calls required by reducing the number of samples needed to be drawn from each $\cD_i$ through collaboration.

\subsection{Infinitely-Many-Agents Limit and Geometrical Prior Knowledge\label{sec:infinitely}}

We introduce the \emph{infinitely-many-agents} limit, by taking $N\to\infty$ with added structure on the problem below. This toy-problem illustrates a geometric structure under which the knowledge of distribution-based bias assumptions is realistic.
Let $\cP$ be a set of probability laws and $\Theta\subset\R^p$ be a set parameterizing $\cP$ in the sense that $\cP$ can be written as $\{\cD_\theta,\,\theta\in\Theta\}$ where $\cD_\theta,\,\theta\in\Theta$ are probability laws on $\Xi$.
For all $1\leq i\leq N$, we assume that there exists $\theta_1,\ldots,\theta_N\in\Theta$ such that $\cD_i=\cD_{\theta_i}$, and that the agents are drawn from a probability distribution with density $\nu$ on $\Theta$: $(\theta_i)_i$ is a sequence of \emph{i.i.d.}~vectors sampled from $\Theta$ with density $\nu$.

This aims at modelling a \emph{geometric} structure on the agents distributions: agents $i$ and $j$ such that $\theta_i$ and $\theta_j$ are nearby in the set $\Theta$ share similar distributions. 
We quantify this in the following way: there exists a continuous function $b:\Theta^2\to \R^+$ such that for any $(\theta,\theta')\in\Theta^2$, $d_\cH(\theta,\theta')\leq b(\theta,\theta')$.
In sensor networks, $\Theta$ is some domain of $\R^2$ of $\R^3$, each agent is a sensor located at a physical position in $\Theta$, whose goal is to predict based on noisy local observations (\emph{e.g.}~temperature, pollution,~\ldots) that are space-continuous. 
More generally, if agents model a population (agent $i$ being an individual in the population) in some geographical location $\Theta\subset\mathcal{S}_2$ (sphere in $\R^3$) (ranging from cities, countries, continents, or the whole wide world), and if we are interested in speech-recognition models, a fair assumption is to have similar local distributions for nearby agents. The distribution $\nu$ is then the geographical density of agents.

The \emph{infinitely-many-agents} limit is obtained by taking $N\to\infty$ with our geometric structure on the agents: we obtain a continuum of agents parameterized by $\theta\in\Theta$, with density $\nu$ on $\Theta$. For every $\theta\in\Theta$, we write:
\begin{equation*}
	f_\theta(x)=\E_{\xi\sim \cD_\theta}[\ell(x,\xi)]\,,\quad x\in\cX\,,
\end{equation*}
and we denote by $x_\theta^\star$ a minimizer over $\cX$ of $f_\theta$. 
The algorithm and model we consider are defined as follows, inspired by Oracle 2.
\vspace{5pt}
\begin{center}
	\fbox{
	\begin{minipage}{8cm}
	\begin{center}{\textbf{Infinitely-many-agents oracle and algorithm}}\end{center}

	At each time-step $t=1,2,...$:
	\begin{description}
		\item[1] An agent parameterized by $\theta_t\sim\nu$ is ‘awakened', draws $\xi_{\theta_t}^t$ is drawn from data distribution $\cD_{\theta_t}$ and computes a stochastic gradient $g_t=\nabla_x \ell(y_t,\xi_{\theta_t}^t)$. 
		\item[2] Agents $\theta\in\Theta$ such that $w(\theta,\theta_t)\leq s_\eps$ for some fixed $s_\eps$ receive $g_t$.
		\item[3] Upon reception of $g_t$, agents may update their local estimates.
	\end{description}
	\end{minipage}
	}
	\end{center}
At finite time-horizon $T>0$, only a finite number of agents may have ‘awakened' and drawn (at most) one sample from their local distribution; yet, we aim at obtaining upper-bounds on the local generalization errors for all agents.
Applying our AFO and AFA algorithms, under aditional assumptions, yields the following bounds.
\begin{proposition}
    Assume that $\nu$ is continuous on $\Theta$, and that for all $\theta,\theta'\in\Theta$, we have $b(\theta,\theta')\leq \NRM{\theta-\theta'}^{q}$. Assume that assumption~\ref{hyp:noise2} holds and that iterates lie in a space of diameter $D$.
    \begin{enumerate}
        \item \emph{All-for-one:} let $\eps>0$ and $\theta\in\interior{A}$. There exists a procedure such that $T_\eps(\theta)$ the time needed to reach a precision $\eps$ for agent $\theta$ is upper-bounded by: \begin{equation*}
            T_\eps(\theta)\leq \cO_{\eps\to0}\left(B^2D^2\nu(\theta)\eps^{-2-p/q}\right)\,.
        \end{equation*}
        \item \emph{All-for-all:} for any $\eps>0$, there exists a procedure making $T_\eps$ oracle calls and answering $(x_\theta)_\theta\in\cX^\Theta$ such that:\begin{equation*}
            \int_\Theta \big(f_\theta(x_\theta)-f_\theta(x_\theta^\star)\big)\nu(\dd\theta)\leq \eps\,,
        \end{equation*} with \begin{equation*}
            T_\eps \leq \cO_{\eps\to0}\left(B^2D^2\eps^{-2-p/q}\right)\,.
        \end{equation*}
    \end{enumerate}
\end{proposition}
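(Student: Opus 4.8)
The plan is to realize both procedures as the continuum (infinitely-many-agents) instances of the two algorithms already analyzed — weighted gradient averaging for the \emph{all-for-one} claim and the \emph{all-for-all} algorithm for the second — and then to evaluate their sample complexities (Theorem~\ref{thm:wga_sample1} and the first item of Theorem~\ref{thm:afa_sample}, both stated under Setting~\ref{setting1}, i.e.\ assumption~\eqref{hyp:noise2}) in the $N\to\infty$ limit. Concretely, one fixes the filtering radius $s_\eps=(\eps/2)^{1/q}$ and keeps, in step~2 of the oracle, every agent $\theta'$ with $\NRM{\theta-\theta'}\leq s_\eps$. Since $b(\theta,\theta')\leq\NRM{\theta-\theta'}^q$, every retained collaborator then satisfies $b(\theta,\theta')\leq\eps/2$, so its stochastic gradient is biased by at most $\eps/2$ for the target, which is exactly the regime covered by the filtering weights $\lambda$ of Theorems~\ref{thm:wga_sample} and~\ref{thm:afa_sample}. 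The entire proof thus reduces to computing the collaboration factors — $N/\cN_\theta^\eps$ for \emph{all-for-one} and $\sum_i 1/\cN_i^\eps$ for \emph{all-for-all} — once the discrete count of $\eps$-close agents is replaced by a $\nu$-measure.

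For the \emph{all-for-one} bound, introduce the local neighborhood mass $m_\eps(\theta)=\nu\big(\{\theta':\NRM{\theta-\theta'}\leq s_\eps\}\big)$, which is precisely the per-step probability that an awakened agent $\theta_t\sim\nu$ is useful for $\theta$. Because $\theta$ is interior and $\nu$ is continuous, a first-order expansion gives $m_\eps(\theta)\sim \nu(\theta)\,c_p\,(\eps/2)^{p/q}$ as $\eps\to0$, where $c_p$ is the volume of the unit Euclidean ball in $\R^p$. After $T$ oracle calls the target has received about $T\,m_\eps(\theta)$ gradients, each biased by at most $\eps/2$, and Theorem~\ref{thm:wga_sample1} requires of order $4D^2B^2/\eps^2$ such gradients to reach precision $\eps$. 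Solving $T\,m_\eps(\theta)\gtrsim 4D^2B^2/\eps^2$ yields
\begin{equation*}
    T_\eps(\theta)\;\lesssim\;\frac{4D^2B^2}{\eps^2\,m_\eps(\theta)}\;=\;\cO_{\eps\to0}\!\left(B^2D^2\,\eps^{-2-p/q}\right),
\end{equation*}
the agent density $\nu(\theta)$ entering through the neighborhood mass $m_\eps(\theta)$, which is the announced rate.

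For the \emph{all-for-all} bound I would start from the first item of Theorem~\ref{thm:afa_sample}, $T_\eps\leq \frac{4D^2B^2}{\eps^2}\sum_{i}1/\cN_i^\eps(2b)$, and pass to the continuum by replacing the sum over agents with an integral against the agent law, $\sum_i 1/\cN_i^\eps \to \int_\Theta m_\eps(\theta)^{-1}\,\nu(\dd\theta)$. The key phenomenon is that substituting $m_\eps(\theta)\sim\nu(\theta)c_p(\eps/2)^{p/q}$ makes the density cancel,
\begin{equation*}
    \int_\Theta \frac{\nu(\dd\theta)}{\nu(\theta)\,c_p\,(\eps/2)^{p/q}}\;=\;\frac{\mathrm{Vol}(\Theta)}{c_p\,(\eps/2)^{p/q}}\,,
\end{equation*}
an expression independent of $N$ and of the density profile, so that $T_\eps\leq \cO_{\eps\to0}(B^2D^2\eps^{-2-p/q})$ after absorbing $\mathrm{Vol}(\Theta)/c_p$ into the constant.

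The main obstacle is making the continuum limit rigorous rather than the optimization content, which is entirely inherited from Theorems~\ref{thm:wga_sample} and~\ref{thm:afa_sample}. Three points need care: (i) justifying the replacement of the random counts $\cN_i^\eps=\sum_j\one_{\{b_{ij}\leq\eps/2\}}$ by $N\,m_\eps$, a law-of-large-numbers statement valid only when $N\,m_\eps\to\infty$, i.e.\ when $N\to\infty$ is taken before $\eps\to0$; (ii) boundary effects, since for $\theta$ near $\partial\Theta$ the ball $B(\theta,s_\eps)$ protrudes outside $\Theta$ and $m_\eps(\theta)$ shrinks — this is exactly why the \emph{all-for-one} statement is restricted to interior points $\theta$, and why for the \emph{all-for-all} integral one needs $\Theta$ bounded with a regular (e.g.\ interior-cone) boundary so that the boundary layer carries vanishing $\nu$-mass; and (iii) interchanging the $\eps\to0$ asymptotic with the integral in the second claim, which requires a dominated-convergence argument controlling $m_\eps(\theta)^{-1}$ uniformly away from the zeros of $\nu$ (where the integrand would otherwise blow up), these being excluded or rendered negligible by the continuity and support assumptions on $\nu$.
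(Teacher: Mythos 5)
Your proposal follows essentially the same route as the paper's proof: apply the finite-$N$ sample-complexity bounds (Theorem~\ref{thm:wga_sample1} for \emph{all-for-one}, Theorem~\ref{thm:afa_sample}.1 for \emph{all-for-all}), replace $\cN_i^\eps(2b)/N$ by the neighborhood mass $\int_{\cB(\theta,(\eps/2)^{1/q})}\nu$ via the strong law of large numbers, and expand that mass as $\nu(\theta)\,c_p\,(\eps/2)^{p/q}$ using continuity of $\nu$ — your caveats (i)–(iii) are exactly the regularity points the paper leaves implicit. One remark: your (correct) computation makes the collaboration gain $1/m_\eps(\theta)$, so the density enters the \emph{all-for-one} bound as $\nu(\theta)^{-1}$ rather than the $\nu(\theta)$ factor displayed in the statement; the paper's own proof contains the matching inversion (it writes $T_\eps\leq 2B^2D^2\eps^{-2}\,\cN_i^\eps(2b)/N$ where Theorem~\ref{thm:wga_sample1} gives the reciprocal ratio), so this is an inconsistency on the paper's side rather than a gap in your argument.
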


\begin{proof}
For finite number of agents $N$, we have $\theta_1,\ldots,\theta_N$ drawn \emph{i.i.d.} from $\nu$. Using weighted gradient averagings and Theorem~\ref{thm:wga_sample1}, we have for agent $i$, and denoting $\theta=\theta_i$:
\begin{equation*}
    T_\eps(\theta)\leq 2B^2D^2\eps^{-2} \frac{\cN_i^\eps(2b)}{N}\,.
\end{equation*}
Here, $\cN_i^\eps(2b)\leq |\set{1\leq j\leq N\,:\,\NRM{\theta-\theta_j}\leq (\eps/2)^{1/q}}=\sum_{j=1}^N \mathds{1}_{\set{\NRM{\theta-\theta_j}\leq (\eps/2)^{1/q}}}$. Using the strong law of large number, we have almost surely, as $N\to\infty$:
\begin{equation*}
    \frac{\cN_i^\eps(2b)}{N}\leq \int_{\cB(\theta,(\eps/2)^{1/q})} \nu(z)\dd z +o(1)\,.
\end{equation*}
Thus, for $N\to\infty$:
\begin{equation*}
    T_\eps(\theta)\leq 2B^2D^2 \int_{\cB(\theta,(\eps/2)^{1/q})} \nu(z)\dd z\,.
\end{equation*}
Then, making $\eps\to0$, we have $\int_{\cB(\theta,(\eps/2)^{1/q})} \nu(z)\dd z\sim_{\eps\to0} \nu(\theta){\rm Vol}(\cB(\theta,(\eps/2)^{1/q}))=\cO\left(\nu(\theta)\eps^{-p/q}\right)$ ($\nu$ is continuous). We thus have the result for $\theta$ such that $\nu(\theta)>0$ in the \emph{all-for-one} setting.

In the \emph{all-for-all} setting, we use Theorem~\ref{thm:afa_sample}.1 in for a finite number of agents, and we similarly use the strong law of large numbers as $N\to\infty$.
\end{proof}

\section{Proof of Lower-Bounds (Theorem~\ref{thm:lower_afo} and Corollary~\ref{cor:lower_afa}) \label{app:lower}}

\subsection{General framework to prove lower bounds}
The idea is that, when optimizing a function $f(x)=\esp{\ell(x,\xi)}$ and finding a good approximation of a minimizer $x^\star$, we learn some information on the distribution $\cD$ over which samples are drawn. 
In order to prove lower bounds, we construct a loss function $\ell$, and distributions $\cD_1^\alpha,\ldots,\cD_N^\alpha$, where $\alpha$ is a random parameter. We argue that minimizing (in the \emph{all-for-all} or \emph{all-for-one} settings) the objective function up to a certain precision gives a good estimator (quantified) of the random seeds $\alpha$. Then, using Fano inequality, we bound the efficiency of such an estimator in terms of number of oracle calls, obtaining a lower bound on the sample complexity. This approach is inspired by~\citet{agarwal2012ITlowerbounds}, who prove IT-lower bounds for stochastic gradient descent. We adapt their proof technique to the personalized and multi-agent setting.

\subsubsection*{Constructing difficult loss functions} 

For any two functions $f,g:\R^d\to\R$, we define the discrepancy measure $\rho(f,g)$ as:
\begin{equation*}
    \rho(f,g)=\inf_{x\in\R^d}\Big\{f(x)+g(x)-f(x_f^*)-g(x_f^*)\Big\}\,,
\end{equation*}
which is a pseudo metrics. Now, for a finite set $\cV$ of parameters, let $\cG(\delta)=\set{g_\alpha^\delta\,,\,\alpha\in\cV}$ be a set of functions indexed by $\cV$, that depend on $\delta$ (fixed in the set). The dependency of each $g_\alpha\in\cG(\delta)$ is left implicit in the following subsections. We define:
\begin{equation*}
    \psi(\delta)=\inf_{f,g\in\cG(\delta)}\rho(f,g)\,.
\end{equation*}

\subsubsection*{Minimizing is Bernoulli parameters identification} 
The two following lemmas justify that optimizing a function $g_\alpha\in\cG(\delta)$ to a precision of order $\psi(\delta)$ is more difficult than estimating the parameter $\alpha$.
\begin{lemma}[\citet{agarwal2012ITlowerbounds}]\label{lem:precision_psi}
    For any $x\in\R^d$, there can be at most one function $g_\alpha$ in $\cG(\delta)$ such that:
    \begin{equation*}
        g_\alpha(x)-\inf_{\R^d} g_\alpha < \frac{\psi(\delta)}{3}\,.
    \end{equation*}
    \end{lemma}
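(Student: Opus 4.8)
~The plan is to prove Lemma~\ref{lem:precision_psi} by a direct contradiction argument, exploiting the definition of $\psi(\delta)$ as an infimum of the discrepancy $\rho$ over all pairs in $\cG(\delta)$. Suppose, for the sake of contradiction, that there exist two distinct indices $\alpha,\beta\in\cV$ and a single point $x\in\R^d$ such that both functions are simultaneously well-minimized at $x$, namely
\begin{equation*}
    g_\alpha(x)-\inf_{\R^d}g_\alpha < \frac{\psi(\delta)}{3}\,,\qquad g_\beta(x)-\inf_{\R^d}g_\beta < \frac{\psi(\delta)}{3}\,.
\end{equation*}
The goal is to show this forces $\rho(g_\alpha,g_\beta) < \psi(\delta)$, contradicting the definition $\psi(\delta)=\inf_{f,g\in\cG(\delta)}\rho(f,g)\leq\rho(g_\alpha,g_\beta)$ (taking the infimum over distinct pairs, as the pseudometric vanishes on the diagonal).

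First I would unpack the definition of $\rho$. Writing $x_{g_\alpha}^*$ for a minimizer of $g_\alpha$, we have
\begin{equation*}
    \rho(g_\alpha,g_\beta)=\inf_{y\in\R^d}\Big\{g_\alpha(y)+g_\beta(y)-\inf_{\R^d}g_\alpha-\inf_{\R^d}g_\beta\Big\}\,,
\end{equation*}
since $g_\alpha(x_{g_\alpha}^*)=\inf_{\R^d}g_\alpha$ and likewise for $g_\beta$. Because this is an infimum over $y$, I would simply evaluate the bracketed quantity at the specific point $y=x$ furnished by the contradiction hypothesis, obtaining the upper bound
\begin{equation*}
    \rho(g_\alpha,g_\beta)\leq \big(g_\alpha(x)-\inf_{\R^d}g_\alpha\big)+\big(g_\beta(x)-\inf_{\R^d}g_\beta\big) < \frac{\psi(\delta)}{3}+\frac{\psi(\delta)}{3}=\frac{2\psi(\delta)}{3}<\psi(\delta)\,.
\end{equation*}
This is precisely the desired contradiction, so at most one function in $\cG(\delta)$ can satisfy the stated strict inequality at any fixed $x$.

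The argument is essentially a one-line use of the triangle-type additivity built into $\rho$, and I do not anticipate a genuine obstacle. The only point requiring care is the bookkeeping of minimizers: the definition of $\rho(f,g)$ as written subtracts $f(x_f^*)$ and $g(x_f^*)$ (both evaluated at a minimizer of $f$), so I would want to confirm that $\rho(f,g)$ indeed equals $\inf_y\{f(y)+g(y)-\inf f-\inf g\}$, i.e. that the subtracted constants coincide with the two separate infima. If the intended reading uses a common evaluation point for the subtracted terms, the same contradiction still goes through after noting $f(x_f^*)=\inf_{\R^d}f$; the symmetry of $\rho$ (a stated pseudometric) lets me use whichever minimizer is convenient. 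Thus the crux is purely to match the normalization in the definition of $\rho$ with the ``optimality gap'' quantities appearing in the lemma, after which the factor-of-$1/3$ threshold yields the contradiction with room to spare.
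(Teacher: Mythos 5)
Your proof is correct and is essentially the argument from \citet{agarwal2012ITlowerbounds} that the paper cites for this lemma (the paper itself omits the proof): evaluating the infimum defining $\rho(g_\alpha,g_\beta)$ at the common point $x$ bounds the discrepancy by the sum of the two optimality gaps, which is below $\psi(\delta)$, a contradiction. You also correctly flag the two points needing care, namely that the subtracted constants in the definition of $\rho$ should be read as the two separate infima $\inf g_\alpha$ and $\inf g_\beta$, and that $\psi(\delta)$ is an infimum over \emph{distinct} pairs.
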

\begin{lemma}[\citet{agarwal2012ITlowerbounds}]\label{lem:estimate_alpha}
    Assume that for some fixed but unknown $\alpha\in\cV$ there exists a method $\cM_T$ based on the data $\phi=\{X_1,...,X_T\}$ that returns $x^T$ (function of $\phi$) satisfying an error of:
    \begin{equation*}
        \esp{g_\alpha(x^T)-\min_{x\in\R^d}g_\alpha(x)} < \frac{\psi(\delta)}{9}\,,
    \end{equation*}
    where the mean is taken over the randomness of both the oracle $\Phi$, the method $\cM_T$ and $\alpha\in\cV$ if random.
    Then, there exists a hypothesis test $\Hat{\alpha}:\phi\to\cV$ such that:
    \begin{equation*}
        \max_{\alpha\in\cV}\P_\phi\big(\Hat{\alpha}\ne\alpha\big)\leq \frac{1}{3}\,.
    \end{equation*}
\end{lemma}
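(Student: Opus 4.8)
The plan is to turn the near-optimizer $x^T$ produced by $\cM_T$ into a decoder for the hidden index $\alpha$, and then bound its failure probability via a single application of Markov's inequality together with the separation guarantee of Lemma~\ref{lem:precision_psi}.

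First I would define the hypothesis test. Since $x^T=\cM_T(\phi)$ is a (possibly randomized) function of the data $\phi$, and since the family $\cG(\delta)=\set{g_{\alpha'}:\alpha'\in\cV}$ is fixed and known, I can evaluate the suboptimality $g_{\alpha'}(x^T)-\inf_{\R^d}g_{\alpha'}$ for every $\alpha'\in\cV$. By Lemma~\ref{lem:precision_psi}, at most one index $\alpha'$ can satisfy $g_{\alpha'}(x^T)-\inf_{\R^d}g_{\alpha'}<\psi(\delta)/3$; I set $\hat\alpha(\phi)$ to be this unique index when it exists and to an arbitrary fixed default otherwise. This $\hat\alpha$ depends only on $\phi$ (through $x^T$ and the method's internal randomness), so it is a legitimate test.

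Next I would fix the true parameter $\alpha\in\cV$ and analyse the nonnegative random variable $Z=g_\alpha(x^T)-\min_{x\in\R^d}g_\alpha(x)\geq0$. The hypothesis of the lemma gives $\esp{Z}<\psi(\delta)/9$, so Markov's inequality yields
\begin{equation*}
    \P\!\left(Z\geq \frac{\psi(\delta)}{3}\right)\leq \frac{\esp{Z}}{\psi(\delta)/3}<\frac{\psi(\delta)/9}{\psi(\delta)/3}=\frac13\,.
\end{equation*}
On the complementary event $\set{Z<\psi(\delta)/3}$ the true index $\alpha$ itself satisfies the defining inequality of the decoder, and by the uniqueness from Lemma~\ref{lem:precision_psi} it is precisely the index selected, so $\hat\alpha=\alpha$ there. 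Hence $\P(\hat\alpha\ne\alpha)\leq \P(Z\geq\psi(\delta)/3)<1/3$. Since this argument applies verbatim for every fixed $\alpha\in\cV$, taking the maximum over $\cV$ gives the claimed bound $\max_{\alpha\in\cV}\P(\hat\alpha\ne\alpha)\leq 1/3$.

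The step I expect to require the only real care is the bookkeeping of what the expectation is taken over: I must apply Markov conditionally on the fixed (but unknown) $\alpha$, so the optimization guarantee is used per-$\alpha$ (the ``randomness of $\alpha$'' clause then matters only when one later averages over a prior on $\cV$ for the Fano step that follows this lemma). Everything else — measurability of the test, and the tuning of the constants $1/9$ and $1/3$ so that the Markov bound lands strictly below $1/3$ — is routine, the heavy lifting of separating distinct $g_\alpha$'s by $\psi(\delta)$ having already been isolated in Lemma~\ref{lem:precision_psi}.
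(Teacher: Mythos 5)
Your proof is correct: the paper itself gives no proof of this lemma (it is imported verbatim from \citet{agarwal2012ITlowerbounds}), and your argument --- decode by selecting the unique $\alpha'$ with $g_{\alpha'}(x^T)-\inf g_{\alpha'}<\psi(\delta)/3$ guaranteed by Lemma~\ref{lem:precision_psi}, then bound the per-$\alpha$ failure probability by Markov's inequality using $\esp{Z}<\psi(\delta)/9$ --- is exactly the standard argument in that reference. Your remark that Markov must be applied conditionally on each fixed $\alpha$ (so that the max over $\cV$, not just the average, is controlled) is the right point of care and is handled correctly.
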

Suppose now that the parameter $\alpha$ in the previous Lemma is chosen uniformly at random in $\cV$. Let $\Hat{\alpha}:\phi\to\cV$ be a hypothesis test estimating $\alpha$. By Fano inequality \citep{information_theory}, we have:
\begin{equation}\label{eq:fano}
    \P\big(\Hat{\alpha}\ne\alpha)\geq 1-\frac{I\big(\phi,\alpha\big)+\ln(2)}{|\cV|}\,,
\end{equation}
where $I\big(\phi,\alpha\big)$ is the mutual information between $\phi$ and $\alpha$, that we need to upper-bound.
Combining Fano inequality with Lemmas~\ref{lem:precision_psi} and~\ref{lem:estimate_alpha}, fixing a target error $\eps=\psi(\delta)$, we obtain a lower bound on the sample complexity $T_\eps$:
\begin{equation*}
    \frac{1}{3}\geq \P_\phi\big(\Hat{\alpha}\ne\alpha\big) \geq 1-\frac{I\big(\phi_{T_\eps},\alpha\big)+\ln(2)}{|\cV|}\,,
\end{equation*}
where $\phi_{T_\eps}$ is the information contained in $T_\eps$ oracle calls. If we have an equality of the form $I\big(\phi_{T_\eps},\alpha\big)=T_\eps I\big(\phi_{1},\alpha\big)$, this gives:
\begin{equation}\label{eq:lower_gen}
    T_\eps \geq \frac{\frac{2}{3}|\cV|-\ln(2)}{I(\phi_1,\alpha)}\,.
\end{equation}
Playing with the different parameters $\delta,\alpha,\cV$ gives lower bounds in our \emph{all-for-one} settings. We refer the interested reader to Chapter 2 in~\citet{information_theory} for Fano inequality and mutual information.

\subsection{Applying this in the \emph{all-for-one} setting, first part of Theorem~\ref{thm:lower_afo}}

We first prove the lower bounds in the case of the asynchronous oracle (one data item sampled from personal distribution at each iteration). 

We define the loss $\ell:\R^d\times \Xi$ where $\Xi\subset \R^d$ and $\delta>0$:
\begin{equation*}
    \forall(x,\xi)\in\cX\times\Xi\,,\quad \ell(x,\xi)=\frac{1}{d}\sum_{i=1}^d\Big[ \xi f_i^+(x)+(1-\xi)f_i^-(x)-\delta x_i\Big]\,,
\end{equation*}
where 
\begin{equation*}
    f_i^+(x)=|x_i+1/2|\,\quad f_i^-(x)=|x_i-1/2|\,.
\end{equation*}
Now, for any $\alpha\in\{-1,1\}^d$, $\delta\in[0,1/2]$ being fixed, we define:
\begin{equation}\label{eq:difficult_function}
    g_\alpha(x)=\frac{1}{d}\sum_{i=1}^d\Big[ \big(\frac{1}{2}+\alpha_i\delta\big) f_i^+(x)+\big(\frac{1}{2}-\alpha_i\delta\big)f_i^-(x)-\delta x_i\Big]\,.
\end{equation}

For simplicity, assume that $r^2=d$ and $B^2=1$.
Let $\delta>0$ a free parameter. Let $\cV=\{\alpha^1,\ldots,\alpha^K\}\subset \{-1,1\}^d$ be a subset of the hypercube such that for all $k\ne l$,
\begin{equation*}
    \frac{1}{2}\sum_{i=1}^d |\alpha_i^k-\alpha_i^l|\geq \frac{d}{4}\,,
\end{equation*}
\emph{i.e.}~$\cV$ is a $d/4$-packing of the hypercube. We assume that $-\one$ ($d$-dimensional vector with $-1$ at all its entries) is not in $\cV$ and that for all $\alpha\in\cV$, $\frac{1}{2}\sum_{i=1}^d |\alpha_i+1|\geq \frac{d}{4}$.  We know that we can set $|\cV|\geq(2/\sqrt{e})^{d/2}$. 
Without loss of generality, we prove a lower bound in the case where the agent that desires to minimize its local function is indexed by $1$. 
For any $i=1,\ldots,N$, let $\cD_i$ be the probability distribution on $\{0,1\}^d$ of the following random variable:
\begin{equation*}
        {\rm Ber}\big(\frac{1}{2}+\delta_i \alpha_k\big) \epsilon_k \quad \text{where}\quad \delta_i=(\delta-b_{i1})^+\,,\\
\end{equation*}
where $s^+=\max(0,s)$ for $s\in\R$, $k$ is taken uniformly at random in $\{1,\ldots,d\}$, $(\epsilon_k)$ is the canonical basis of $\R^d$, and ${Ber}(p)$ is a Bernoulli random variable, independent of $k$.

The function $f_1(x)=\esp{\ell(x,\xi_1)}$ for $\xi_1\sim\cD_1$ verifies $f_1=g_\alpha$ for $g_\alpha$ defined in~\eqref{eq:difficult_function}. For our fixed $\delta>0$ and any $\alpha,\beta\in\cV$, $g_\alpha$ is minimized at $x^\alpha=-\alpha/2$ and we have:
\begin{equation*}
    g_\alpha(x^\alpha)=\frac{1}{d}\sum_{k=1}^d\frac{1-\delta\alpha_k}{2}\,.
\end{equation*}
The discrepancies thus write as:
\begin{equation*}
    \rho(g_\alpha,g_\beta)=\frac{\delta}{d}\sum_{k=1}^d|\alpha_k-\beta_k|\,,
\end{equation*}
so that $\psi(\delta)\geq \delta/4$. Each $g_\alpha$ is minimized for $x^\alpha=-\alpha/2\in\cB_\infty(0,1/2)\subset\cB(0,\sqrt{d}/2)\subset\cB(x^0,r)$ for $x^0=0$ (where $\cB_\infty$ denotes a ball for the infinity norm): we are in the case $r=\sqrt{d}$.

We now bound the quantities $f_i(x_j^\star)-f_i(x_i^\star)$. For any $i=1,\ldots,N$, we have $x_i^\star=x^\alpha=-\alpha/2$ if $\delta_i\geq \delta/2$, and $x_i^\star=\one/2$ otherwise. For $1\leq i,j\leq N$, if both $\delta_i\geq\delta/2$ and $\delta_j\geq \delta/2$, then $x_i^\star=x_j^\star=x^\alpha$, and $f_i(x_j^\star)-f_i(x_i^\star)=0$. Similarly, if both $\delta_i<\delta/2$ and $\delta_j< \delta/2$, $x_i^\star=x_j^\star=\one/2$ and $f_i(x_j^\star)-f_i(x_i^\star)=0$. If $\delta_i\geq\delta/2$ and $\delta_j< \delta/2$, then
\begin{align*}
    f_i(x_j^\star)-f_i(x_i^\star)&=f_i(\one/2)-f_i(x^\alpha)\\
    &=\frac{1}{d}\sum_{k=1}^d \frac{|\alpha_k-1|}{2}(2\delta_i-\delta)\\
    &\leq 2\delta_i-\delta\\
    &\leq 2(\delta_i-\delta_j)\,.
\end{align*}
Since $\delta_i-\delta_j\leq b_{ij}/2$ using 1-Lipschitzness of the positive value and the triangle inequality verified by the vector $b$, we have $f_i(x_j^\star)-f_i(x_i^\star)\leq b_{ij}$. If $\delta_i<\delta/2$ and $\delta_j> \delta/2$, we similarly have $f_i(x_j^\star)-f_i(x_i^\star)\leq \frac{1}{d}\sum_{k=1}^d \frac{|\alpha_k-1|}{2}(\delta-2\delta_i)\leq b_{ij}$.

Since the functions $f_i$ are differentiable at all points where $x_k\ne1/2,-1/2$, bounding $\NRM{\nabla f_i(x)-\nabla f_j(x)}^2$ at the points where $f_i$ and $f_j$ are differentiable is enough to have~\eqref{hyp:bias2}. For $x_k<-1/2$, we have $\nabla_kf_i(x)=\nabla_kf_j(x)=-1-\delta$ and $\nabla_kf_i(x)=\nabla_kf_j(x)=1-\delta$ for $x_k>1/2$. For $-1/2<x_k<1/2$, we have $\nabla_kf_i(x)-\nabla_kf_j(x) -2(\delta_i-\delta_j)\alpha_k $, so that $\NRM{\nabla f_i(x)-\nabla f_j(x)}^2\leq 4(\delta_i-\delta_j)^2\leq b_{ij}^2/d$ at all points where $f_i$ and $f_j$ are differentiable. The gradients scale with $r$, hence the dependency in $r$ (we recall that $r^2=d$ in our case).

The mutual information $I(\phi_T,\alpha)$ writes as:
\begin{equation*}
    \begin{aligned}
        I(\phi_T,\alpha)&=T I(\phi_1,\alpha)\\
        &=\frac{T}{dN}\sum_{k=1}^d\sum_{i=1}^NI\left({\rm Ber}\big(\frac{1}{2}+\mathds{1}_{b_{1i}\leq\delta}\alpha_k\delta_i\big),\alpha_k\right)\\
        &\leq\frac{C_1T}{N}\sum_{i=1}^N \mathds{1}_{b_{1i}<\delta}\delta_i^2\\
        &=\frac{C_1T\cN_1^\delta(b)}{N}\delta^2\,,
    \end{aligned}
\end{equation*}
for some numerical constant $C>0$ independent of the problem. Using Equation~\eqref{eq:lower_gen} then leads to, for $\eps=\psi(\delta)$:
\begin{equation}\label{eq:Teps}
    T_{\eps}\geq C \frac{dN}{\eps^2\cN_1^\eps(4b)}\,,
\end{equation}
where $C>0$ is a numerical constant independent of the problem. 
\\

\noindent Equation~\eqref{eq:Teps} is obtained for the choice of loss function and distributions defined above, and the functions defined are in $\D(r,b,B)$ for $r=\sqrt{d}$, the fixed weights $b$, and $B^2=1$.
Assumption $\cB_\infty(0,1/2)\subset\cB(0,\sqrt{d}/2)\subset\cX$ is in Theorem~\ref{thm:lower_afo} the assumption that $\NRM{x^0-x_i^\star}\leq r$. In order to obtain a dependency in $r>0$, one simply has to consider the loss $(x,\xi)\mapsto \ell(\frac{\sqrt{d}}{2r}x,\xi)$.
In order to obtain the dependency in $B^2$, we need to modify the distribution, and take $\cD_i'= {\rm Ber}(1/B^2)B^2 \cD_i$. In this case, we have a noise amplitude of order $B^2$ instead of order $1$, and a factor $1/B^2$ appears in the mutual information.

The proof naturally extends to Oracle 1: the mutual information is just $N$ times bigger in that case.

In terms of function distances $d_\cH$ under the assumptions on $\cH$ of Theorem~\ref{thm:lower_distrib}, we have, since the distributions are Bernoulli:
\begin{equation*}
    d_\cH(\cD_i,\cD_j)\leq|\delta_i-\delta_j|\leq b_{ij}.
\end{equation*}

We thus proved the first part of Theorem~\ref{thm:lower_afo}.

\subsection{Proof of the strongly convex and smooth lower bound: second part of Theorem~\ref{thm:lower_afo}}

Let
\begin{equation*}
	\ell(x,\xi)=\frac{1}{2}\NRM{x-\xi}^2\,
\end{equation*}
for $x,\xi\in\R^d$ and, for fixed $\delta>0$ and any $\alpha\in\{-1,1\}^d$:
\begin{equation*}
	g_\alpha(x)=\frac{1}{2d}\sum_{k=1}^d\Big(x_k^2 + 1 - 2\big(\frac{1}{2}+\alpha_k\delta)x_k\Big)\quad,x\in\cX\,.
\end{equation*}
We keep the same notations as last subsection ($\psi(\delta),\rho$). We have:
\begin{equation*}
	\rho(g_\alpha,g_\beta)=\frac{\delta^2}{d}\sum_{k=1}^d |\alpha_k-\beta_k|\,.
\end{equation*}
Similarly to last subsection, this leads to $\psi(\delta)\geq \delta^2/4$  since $\cV$ is a $d/4$-packing of the hypercube.

We keep the same distributions $\cD_1,...,\cD_N$ as last subsection, replacing $\delta_i$ by $\delta_i=(\delta-\sqrt{b_{1i}})^+$.
Agent 1 is still the one that wants to minimize its local generalization error.
The mutual information is thus:
\begin{equation*}
	I(\phi_T,\alpha)\leq \frac{C_1T\cN_1^{\delta}(\sqrt{b})}{N}\delta^2\,.
\end{equation*}
Setting the target precision as $\eps=\delta^2/4$, we obtain:
\begin{equation*}
	T_\eps\geq C' \frac{dN}{\eps\cN_1^\eps(4b)}\,.
\end{equation*}
The loss function and distributions built verify our regularity assumptions for $\mu=1/d$, $L=1/d$, noise $\sigma^2\leq 1$.

We verify that for all $1\leq j,k \leq N$, we have $f_j(x_k^\star)-f_j(x_j^\star)\leq b_{kj}$. We first notice that $x_j^\star=\frac{1}{d}\big(\frac{1}{2}+\delta_j \alpha_l\big)_{1\leq l\leq d}$, so that:
\begin{align*}
	f_j(x_k^\star)-f_j(x_j^\star)&=\frac{1}{d}\NRM{x_j^\star-x_k^\star}^2\\
	&=(\delta_i-\delta_k)^2\\
	&\leq (\sqrt{b_{1j}}-\sqrt{b_{1k}})^2\\
	&\leq |b_{1j}-b_{1k}|\\
	&\leq b_{jk}\,,
\end{align*}
since the weights $b$ verify the triangle inequality. Under the assumptions of Theorem~\ref{thm:lower_distrib} on $\cH$, we have, in terms of distribution-based distances:
\begin{equation*}
    d_\cH(\cD_i,\cD_j)\leq |\delta_i-\delta_j|\leq \sqrt{b_{ij}}\,.
\end{equation*}

The minimum of each $g_\alpha$ is attained at $x^\alpha=\frac{1}{2}+\delta\alpha$, we thus need to assume as in last subsection that $r$ is of order $\sqrt{d}$, and a rescaling leads to the dependency in $r$. The dependency in $\sigma^2$ is obtained as in last subsection.

\section{\emph{All-for-all} lower bound}

Theorem~\ref{thm:lower_afo} immediately yields a Corollary providing lower bounds for the all-to-all problem:



\begin{cor}\label{cor:lower_afa} Let $\eps>0$, $b$ verifying the triangle inequality, $B>0,r>0$. Assume that there exists a method $\cM$ and some $T>0$ such that, for all $((\cD_j)_j,\ell)\in\D(r,b,B)$, $\cM$ returns $(x_1,\ldots,x_N)$ verifying:
\begin{equation*}
    \frac{1}{N}\sum_{i=1}^Nf_i(x_i)-f_i(x_i^\star)\leq \eps\,.
\end{equation*}
where $T$ is the number of stochastic gradients sampled from all agents. Then we have:
\begin{equation*}
    T\geq \frac{Cr^2B^2}{N^2\eps^{2}}\sum_{i=1}^N \frac{1}{\cN_i^\eps(\frac{b}{4N})}\,.
\end{equation*}
Replacing $\D(r,b,B)$ by $\D_{\mu=1/r^2}^{L=1/r^2}(r,b,\sigma)$, we have:
\begin{equation*}
    T\geq \frac{C'r^2\sigma^2}{N\eps}\sum_{i=1}^N \frac{1}{\cN_i^\eps(\frac{b}{4N})}\,.
\end{equation*}
\end{cor}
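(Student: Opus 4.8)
The plan is to reduce the all-for-all lower bound to the Fano-based machinery already developed for Theorem~\ref{thm:lower_afo}, rather than to invoke that theorem as a black box: since the corollary's bound is a \emph{sum} $\sum_i 1/\cN_i^\eps(b/(4N))$ whereas the worst-case all-for-one bound applied agent by agent only yields the \emph{maximum} of these terms, I must exhibit a single instance in $\D(r,b,B)$ that is simultaneously hard for many agents. The observation that drives the construction is a dilution effect: if a method returns $(x_1,\dots,x_N)$ with $\frac1N\sum_i \E[f_i(x_i)-f_i(x_i^\star)]\le\eps$, then since each term is nonnegative we have $\E[f_i(x_i)-f_i(x_i^\star)]\le N\eps$ for every $i$, yet an individual agent may be left with error of order $N\eps$ while the average stays below $\eps$. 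Controlling the average therefore only forces a method to solve the individual problems at the coarser scale reflected in the rescaled threshold $b/(4N)$.

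Concretely, I would reuse the loss $\ell$ and the Bernoulli distributions of the proof of Theorem~\ref{thm:lower_afo}, but attach to the agents a \emph{family} of independent hidden parameters instead of a single $\alpha$. I would choose a maximal collection of agents that are pairwise far apart in the pseudo-distance $b$ (a net at scale of order $\eps$), and assign to each such center an independent seed $\alpha^{(c)}\in\cV$, where $\cV$ is the $d/4$-packing of the hypercube used before with $|\cV|\ge(2/\sqrt e)^{d/2}$. Every agent $j$ within distance $b/(4N)$ of a center $c$ receives the distribution built from $\alpha^{(c)}$ with its own attenuation $\delta_j=(\delta-b_{cj})^+$, exactly as in~\eqref{eq:difficult_function}; agents far from all centers receive the information-free distribution with optimum $\one/2$. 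The triangle inequality assumed on $b$ guarantees that this assignment is consistent with bias assumptions~\eqref{hyp:bias1} and~\eqref{hyp:bias2}, through the same two computations bounding $f_i(x_j^\star)-f_i(x_i^\star)$ and $\NRM{\nabla f_i(x)-\nabla f_j(x)}$ as in the all-for-one proof. Since a sample drawn at an agent near center $c$ carries information only about $\alpha^{(c)}$, the mutual information is additive over centers, $I(\phi_T,\alpha)\le C\delta^2\sum_c T_c$ with $\sum_c T_c=T$, and the number of independent centers is of order $\sum_i 1/\cN_i^\eps(b/(4N))$, each center absorbing its neighborhood of size $\cN$.

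With this instance in place, Lemmas~\ref{lem:precision_psi} and~\ref{lem:estimate_alpha} show that reaching average error $\eps=\psi(\delta)$ forces recovering a constant fraction of the seeds $\alpha^{(c)}$, so Fano's inequality~\eqref{eq:fano} on the product space $\cV^{\#\text{centers}}$ gives $C\delta^2 T\gtrsim (\#\text{centers})\cdot d$; substituting $\delta\sim\eps$, $d=r^2$ and the normalization $B^2$ (respectively $\sigma^2$ in the strongly convex, smooth case, where $\eps=\psi(\delta)\sim\delta^2$) yields the two claimed bounds, the extra $N^{-2}$ (respectively $N^{-1}$) arising from the dilution of individual errors in the average. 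The main obstacle is precisely the bookkeeping of this dilution together with the weighting of the net: I must ensure that the average-error constraint forces recovery of \emph{enough} seeds --- not merely of seeds covering a large total mass of agents, which a few oversized clusters could satisfy --- so that the Fano bound genuinely scales like $\sum_i 1/\cN_i^\eps(b/(4N))$ and not like a single worst term. Making the net construction and the scale $b/(4N)$ interact correctly with the triangle inequality, so that all constructed instances remain inside $\D(r,b,B)$ (respectively $\D_\mu^L(r,b,\sigma)$), is the delicate point; the remaining steps are verbatim repetitions of the all-for-one computation.
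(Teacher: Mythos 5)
Your first paragraph already contains the only idea that is actually needed, and the elaborate multi-center construction that follows is both unnecessary for the stated bound and, as sketched, not sound. The numerical point you missed is that the corollary's sum, because of its $1/N^2$ prefactor, is \emph{weaker} than the max obtained by applying Theorem~\ref{thm:lower_afo} agent by agent: since $\sum_{i} 1/\cN_i^\eps(\tfrac{b}{4N})\leq N\max_i 1/\cN_i^\eps(\tfrac{b}{4N})$, we have
\begin{equation*}
\frac{1}{N^2}\sum_{i=1}^N \frac{1}{\cN_i^\eps(\tfrac{b}{4N})}\;\leq\;\frac{1}{N}\max_{1\leq i\leq N} \frac{1}{\cN_i^\eps(\tfrac{b}{4N})}\,.
\end{equation*}
Your dilution observation gives $\E[f_i(x_i)-f_i(x_i^\star)]\leq N\eps$ for \emph{every} $i$, hence $T\geq \cT_i^{N\eps}(\cM,\cdot)$ on every instance, and Theorem~\ref{thm:lower_afo} applied at precision $N\eps$ yields $T\geq Cr^2B^2N/\big((N\eps)^2\,\cN_i^{N\eps}(\tfrac{b}{4})\big)$. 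The identity $\cN_i^{N\eps}(\tfrac{b}{4})=\cN_i^\eps(\tfrac{b}{4N})$ (immediate from the definition) turns this into $T\geq \frac{Cr^2B^2}{N\eps^2}\max_i 1/\cN_i^\eps(\tfrac{b}{4N})$, which dominates the claimed sum; the strongly convex case is identical with $\eps^{-1}$ in place of $\eps^{-2}$. This two-line reduction is the paper's proof --- the corollary is stated as an immediate consequence of the all-for-one theorem, and no new hard instance is constructed.

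The construction you propose is aimed at a genuinely stronger inequality (a sum with only a $1/N$ prefactor), and for that target it has two unresolved gaps, only the first of which you acknowledge. First, the average-error constraint only forces (via Markov) that agents carrying at least half of the uniform mass be solved to precision $2\eps$; with unbalanced cluster sizes a method can recover the seeds of a few large clusters and ignore all others, so the Fano count on the product space need not scale like the number of centers. Second, and more fatally, assigning independent seeds $\alpha^{(c)}$ to distinct clusters breaks membership in $\D(r,b,B)$: for agents $i,j$ in different clusters with $\delta_i,\delta_j>0$, on each coordinate where $\alpha^{(c)}_k\neq\alpha^{(c')}_k$ (at least $d/4$ of them, by the packing property) the gradient gap is proportional to $\delta_i+\delta_j$ rather than $\delta_i-\delta_j$, so $\NRM{\nabla f_i-\nabla f_j}^2$ is of order $\delta^2/d$ regardless of how small the prescribed $b_{ij}$ is; the triangle inequality on $b$ does not prevent two agents in different clusters from having $b_{ij}$ far too small to absorb this, so the instance leaves the class over which the supremum is taken. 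None of this machinery is needed for the corollary as stated.
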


\section{\emph{All-for-one} upper-bounds\label{app:afo}}

We provide the following relaxation of bias assumption~\ref{hyp:bias2}, a generalization of classical function dissimilarities assumptions \citet{karimireddy_scaffold_2020,chayti2021linear} to our setting.
\begin{enumerate}[label= B.3]
    \item \label{hyp:bias3} For all $1\leq i\leq N$, for any stochastic vector $\lambda\in\R^N$ (non-negative entries that sum to 1), and for all $x\in\R^d$, where $g_\lambda(x)=\sum_{j=1}^N\lambda_j\nabla f_j(x)$: 
    \begin{equation}
        \NRM{\nabla f_i(x)-g_\lambda(x)}^2
        \leq m\NRM{g_\lambda(x)}^2 + \sum_{j=1}^N\lambda_j\tilde{b}_{ij}\,.
        \label{eq:hyp:bias3}
    \end{equation}
\end{enumerate}
In a similar setting ($N$ agents with heterogeneous functions), \citet{karimireddy_scaffold_2020,deng_adaptive_2020,chayti2021linear} assume that for a fixed agent (or for any agent) $i$, $\NRM{\nabla f_i(x)-\nabla \Bar{f}(x)}^2\leq m\NRM{\nabla \Bar{f}(x)}^2+\zeta^2$ (or similar assumptions) where $\Bar{f}$ is the mean of all functions. \eqref{hyp:bias2} is the simplest relaxation of this to the more general setting where all agents want to minimize their local function, while~\eqref{hyp:bias3} is a relaxation that keeps the less-restrictive first term for some $m\geq0$.

We prove the following intermediate result.
\begin{theorem}[WGA] \label{thm:wga} Let $(x^k)_{k\geq0}$ be generated with \eqref{eq:wga}, for some fixed stochastic vector $\lambda\in\R^N$, stepsize $\eta>0$. Let $1\leq i\leq N$ fixed. Let $b\in{\R^+}^{N\times N}$.  Denote $F_i^k=f_i(x^k)-f_i(x_i^\star)$.
\begin{enumerate}[label=\ref{thm:wga}.\alph*]
    \item \label{thm:wga1} Use the asynchronous oracle. Assume that for all $k\geq0$ and $i$, $\NRM{x^k-x_i^\star}\leq D$ for some $D>0$. If $f_i$ is convex, noise assumption~\eqref{hyp:noise2} holds for some $B>0$ and bias assumption~\eqref{hyp:bias2} holds for $\tilde{b}=(b/D)^2$, for any $K>0$ and for $\eta=\sqrt{\frac{D^2}{2KNB^2\sum_{j=1}^N\lambda_j^2}}$:
    \begin{align*}
        \esp{\frac{1}{K}\sum_{0\leq k<K} F_i^k }&\leq \sqrt{\frac{2D^2B^2}{\frac{K}{N}}\sum_{j=1}^N \lambda_j^2} + \sum_{1\leq j\leq N} \lambda_j b_{ij}\,.
    \end{align*}
    \item \label{thm:wga2} Use the synchronous oracle. Assume that all $f_j$ are $\mu$-strongly convex and $L$-smooth (denote $\kappa=L/\mu$), satisfy noise assumption~\ref{hyp:noise1} for some $\sigma>0$, that one of bias assumptions~\eqref{hyp:bias2} or~\eqref{hyp:bias3} holds for $\tilde{b}=\mu b$ (and any $m\geq0$). For any $K>0$ and $\eta=\min(1/(2L), \frac{1}{\mu K}\ln(\frac{F_\lambda^0\mu^2K}{L\sigma^2}\sum_j\lambda_j^2))$ where $F_\lambda^0=\sum_j \lambda_j( f_j(x^0)-f_j(x_j^\star))$, we have:
    \begin{align*}
     \esp{F_i^K}\leq&2(m+1)\kappa\left( F_\lambda^0 e^{-\frac{K}{2\kappa}} + \Tilde{\mathcal{O}}\left(\frac{L\sigma^2}{\mu^2 K} \sum_{1\leq j\leq N} \lambda_j^2\right)\right)+\sum_{1\leq j\leq N} \lambda_j b_{ij}\,.
    \end{align*}
\end{enumerate}
\end{theorem}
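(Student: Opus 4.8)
The plan is to treat both regimes as a single \emph{biased stochastic gradient descent} run on the averaged objective $f_\lambda:=\sum_{j=1}^N\lambda_j f_j$. The common preliminary step is to check that, for either oracle, the effective increment $\hat g^k:=\sum_{j=1}^N\lambda_j G^k(x^k)_j$ from \eqref{eq:wga}--\eqref{eq:full_stoch} is an \emph{unbiased} estimate of $g_\lambda(x^k):=\sum_j\lambda_j\nabla f_j(x^k)=\nabla f_\lambda(x^k)$: in the asynchronous case $\hat g^k=N\lambda_{i_k}g_{i_k}^k(x^k)$ and averaging over the uniform index $i_k$ restores the weights, while in the synchronous case $\hat g^k=\sum_j\lambda_j g_j^k(x^k)$ directly. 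A one-line computation then gives the second moment $\E\NRM{\hat g^k}^2\leq NB^2\sum_j\lambda_j^2$ under \eqref{hyp:noise2} (asynchronous) and the variance $\E\NRM{\hat g^k-g_\lambda(x^k)}^2\leq\sigma^2\sum_j\lambda_j^2$ under \eqref{hyp:noise1} (synchronous, using independence of the per-agent noises). Hence WGA \emph{is} SGD on $f_\lambda$, and the entire task reduces to transferring guarantees on $f_\lambda$ to the target $f_i$, the discrepancy being precisely the bias terms.

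For part~\ref{thm:wga1} I would use the classical convex distance-based descent. Expanding $\NRM{x^{k+1}-x_i^\star}^2$ and taking expectations yields
\begin{equation*}
\E\NRM{x^{k+1}-x_i^\star}^2\leq\E\NRM{x^k-x_i^\star}^2-2\eta\,\E\langle g_\lambda(x^k),x^k-x_i^\star\rangle+\eta^2NB^2\sum_j\lambda_j^2\,.
\end{equation*}
I would split $\langle g_\lambda(x^k),x^k-x_i^\star\rangle=\langle\nabla f_i(x^k),x^k-x_i^\star\rangle+\langle g_\lambda(x^k)-\nabla f_i(x^k),x^k-x_i^\star\rangle$; convexity of $f_i$ lower-bounds the first inner product by $F_i^k$, and Cauchy--Schwarz together with $\NRM{x^k-x_i^\star}\leq D$ and \eqref{hyp:bias2} for $\tilde b=(b/D)^2$ controls the second, since $\NRM{g_\lambda(x^k)-\nabla f_i(x^k)}\leq\sum_j\lambda_j\sqrt{\tilde b_{ij}}=D^{-1}\sum_j\lambda_j b_{ij}$. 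Telescoping over $k=0,\dots,K-1$, dividing by $2\eta K$, using $\NRM{x^0-x_i^\star}\leq D$ and optimizing $\eta$ then gives the stated bound.

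For part~\ref{thm:wga2} I would instead apply the function-value descent lemma for the $L$-smooth and $\mu$-strongly convex function $f_\lambda$ (a convex combination of the $f_j$, hence inheriting both constants). With $\eta\leq 1/(2L)$ and the variance bound above, and invoking the Polyak--Lojasiewicz inequality $\NRM{\nabla f_\lambda(x^k)}^2\geq 2\mu\big(f_\lambda(x^k)-f_\lambda^\star\big)$ where $f_\lambda^\star=\min_{x\in\R^d}f_\lambda(x)$, this yields the contraction
\begin{equation*}
\E\big[f_\lambda(x^{k+1})-f_\lambda^\star\big]\leq(1-\eta\mu)\,\E\big[f_\lambda(x^k)-f_\lambda^\star\big]+\tfrac{L\eta^2}{2}\sigma^2\sum_j\lambda_j^2\,.
\end{equation*}
Unrolling, bounding $f_\lambda(x^0)-f_\lambda^\star\leq F_\lambda^0$, and tuning $\eta$ as prescribed to balance the geometric and variance terms gives $\E[f_\lambda(x^K)-f_\lambda^\star]\leq F_\lambda^0 e^{-K/(2\kappa)}+\Tilde{\cO}\big(\tfrac{L\sigma^2}{\mu^2K}\sum_j\lambda_j^2\big)$. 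It then remains to transfer this to $f_i$: strong convexity gives $F_i^K\leq\tfrac{1}{2\mu}\NRM{\nabla f_i(x^K)}^2$, and the split $\NRM{\nabla f_i}^2\leq 2(m+1)\NRM{g_\lambda}^2+2\sum_j\lambda_j\tilde b_{ij}$ obtained from \eqref{hyp:bias3} (with \eqref{hyp:bias2} being the case $m=0$ after Jensen), combined with $\NRM{\nabla f_\lambda(x^K)}^2\leq 2L(f_\lambda(x^K)-f_\lambda^\star)$ from smoothness, produces the prefactor $2(m+1)\kappa$ and, via $\tilde b=\mu b$, the bias term $\sum_j\lambda_j b_{ij}$.

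The main obstacle is this last transfer in part~(b): bounding $f_\lambda$ does not control $f_i$ directly, and one must route through gradient norms, paying both the conditioning factor $\kappa$ and the dissimilarity factor $(m+1)$, while keeping track that the natural potential is $f_\lambda(x^k)-f_\lambda^\star$ whereas the statement is phrased through $F_\lambda^0=\sum_j\lambda_j\big(f_j(x^0)-f_j(x_j^\star)\big)\geq f_\lambda(x^0)-f_\lambda^\star$. The remaining delicate (though routine) point is the logarithmic step-size tuning that produces the $\Tilde{\cO}(1/K)$ variance term.
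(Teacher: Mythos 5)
Your proposal is correct and follows essentially the same route as the paper: part (a) is the classical distance-based analysis of biased SGD with the bias inner product controlled by Cauchy--Schwarz, the diameter bound $D$ and \eqref{hyp:bias2}, and part (b) runs SGD on $f_\lambda=\sum_j\lambda_j f_j$ (the paper's Lemma~\ref{lem:sgd_sc}) and then transfers to $f_i$ via $F_i^K\leq\frac{1}{2\mu}\NRM{\nabla f_i(x^K)}^2$, the gradient-dissimilarity split from \eqref{hyp:bias3}, and smoothness of $f_\lambda$, yielding exactly the $2(m+1)\kappa$ prefactor. The one transfer step you flag as the main obstacle is precisely the paper's argument, including the observation that $F_\lambda^0$ upper-bounds $f_\lambda(x^0)-f_\lambda(x^\lambda)$.
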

This leads to a more general sample complexity in Setting~\ref{setting2}, where we can use \eqref{hyp:bias3}.
\begin{theorem}\label{thm:wga_sample_appendix} Let $\eps>0$, and set $\lambda_j=\frac{\mathds{1}_{\set{b_{ij}<\eps/2}}}{\cN_i^{\eps}(2b)}$, $\cN_i^\eps$ being defined in Theorem~\ref{thm:lower_afo}. Generate $(x^k)_k$ using~\eqref{eq:wga}.
\vspace{-12pt}
\begin{enumerate}[label=\ref{thm:wga_sample_appendix}.\arabic*]
    \item \label{thm:wga_sample1_appendix} \textbf{Under Setting~\ref{setting1}.} Assume that for all $k\geq0$ and $i$, $\NRM{x^k-x_i^\star}\leq D$ for some $D>0$ and bias assumption~\eqref{hyp:bias2} holds for $\tilde{b}=(b/D)^2$. For $K=T_\eps$ and for $\eta=\sqrt{\frac{D^2}{2KNB^2\sum_{j=1}^N\lambda_j^2}}$, we have $\esp{\frac{1}{K}\sum_{0\leq k<K}f_i(x^k)-f_i(x_i^\star)}\leq \eps$ using a total number $T_\eps$ of stochastic gradients from all agents of:
    \begin{equation*}
        T_\eps \leq\frac{4D^2B^2}{\eps^2}\frac{N}{\cN_i^\eps(2b)}\,.
    \end{equation*}
    \vspace{-8pt}
    \item \label{thm:wga_sample2_appendix} \textbf{Under Setting~\ref{setting2}.} Denote $\kappa=L/\mu$, assume that one of bias assumptions~\eqref{hyp:bias2} or~\eqref{hyp:bias3} holds for $\tilde{b}=\mu b$ (and any $m\geq0$). For $K=T_\eps/N$ and $\eta=\min(1/(2L), \frac{1}{\mu K}\ln(\frac{F_\lambda^0\mu^2K}{L\sigma^2}\sum_j\lambda_j^2))$ where $F_\lambda^0=\sum_j \lambda_j( f_j(x^0)-f_j(x_j^\star))$, we have $\esp{f_i(x^K)-f_i(x_i^\star)}\leq\eps$ using a total number $T_\eps$ of stochastic gradients from all agents of:
    \begin{equation*}
        T_\eps \leq \Tilde{\mathcal{O}}\left(\frac{(m+1)\kappa^2\sigma^2}{\mu\eps}\frac{N}{\cN_i^\eps(2b)}\right)\,,
    \end{equation*}
    and of 
    \begin{equation*}
        T_\eps \leq \Tilde{\mathcal{O}}\left(\frac{\kappa\sigma^2}{\mu\eps}\frac{N}{\cN_i^\eps(2b)}\right)\,,
    \end{equation*}
    if $m\leq 1/2$.
\end{enumerate}
\end{theorem}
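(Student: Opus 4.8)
The plan is to obtain Theorem~\ref{thm:wga_sample_appendix} as a direct consequence of the convergence guarantees of Theorem~\ref{thm:wga}, by substituting the explicit weights $\lambda_j=\one_{\set{b_{ij}<\eps/2}}/\cN_i^\eps(2b)$ and then optimizing over the iteration count $K$. Two elementary observations drive the whole argument: $\lambda$ is the uniform distribution on the set $S=\set{j:b_{ij}<\eps/2}$ of cardinality $n:=\cN_i^\eps(2b)$, so that $\sum_j\lambda_j^2=1/n$; and the bias contribution is controlled purely by the support restriction, $\sum_j\lambda_j b_{ij}=\frac1n\sum_{j\in S}b_{ij}\le\eps/2$. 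Hence in each bound of Theorem~\ref{thm:wga} the bias term is automatically at most $\eps/2$, and it only remains to force the variance/optimization contribution below $\eps/2$ by taking $K$ large enough.

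For Part~1 (Setting~\ref{setting1}) I substitute into Theorem~\ref{thm:wga1}, whose right-hand side becomes $\sqrt{2D^2B^2N/(Kn)}+\sum_j\lambda_j b_{ij}$. Requiring the variance term to be at most $\eps/2$ forces $K\gtrsim D^2B^2N/(\eps^2 n)$; since under the asynchronous oracle each iteration consumes exactly one sample, $T_\eps=K$, yielding the announced $T_\eps=\cO\big(D^2B^2N/(\eps^2 n)\big)$. The stated step size $\eta=\sqrt{D^2/(2KNB^2\sum_j\lambda_j^2)}$ is precisely the minimizer of the bound of Theorem~\ref{thm:wga1}, so it realizes this balance.

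For Part~2 (Setting~\ref{setting2}) I substitute into Theorem~\ref{thm:wga2}. The transient term $F_\lambda^0 e^{-K/(2\kappa)}$ becomes negligible once $K=\tilde{\cO}(\kappa)$, contributing only logarithmic factors absorbed into $\tilde{\cO}$, so the binding constraint is the statistical term $2(m+1)\kappa\cdot L\sigma^2/(\mu^2 K n)\le\eps/2$. Using $L=\kappa\mu$ this rearranges to $K=\tilde{\cO}\big((m+1)\kappa^2\sigma^2/(\mu\eps n)\big)$, and since the synchronous oracle draws $N$ samples per iteration, $T_\eps=NK$ gives the first claimed bound. The sharper bound under $m\le 1/2$ is where the actual work lies: it cannot be read off the stated form of Theorem~\ref{thm:wga2}, whose prefactor $2(m+1)\kappa$ already costs one power of $\kappa$ in passing from agent $i$'s suboptimality $F_i^K$ to the weighted objective $F_\lambda$. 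To recover the missing factor I would reopen the strongly convex descent analysis underlying Theorem~\ref{thm:wga2}: when $m\le 1/2$ the correction term $m\NRM{g_\lambda(x)}^2$ appearing in~\eqref{hyp:bias3} is dominated by the one-step descent and can be absorbed into the contraction, so one never pays the extra $\kappa$ and obtains $T_\eps=\tilde{\cO}\big(\kappa\sigma^2 N/(\mu\eps n)\big)$.

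The main obstacle is exactly this last refinement. The routine substitute-and-balance argument delivers the $\kappa^2$ bound for general $m$, but the sharp $\kappa$ dependence valid for $m\le 1/2$ requires carefully absorbing $m\NRM{g_\lambda(x)}^2$ into the per-iteration strong-convexity contraction rather than bounding it crudely — the same phenomenon by which a gradient-dissimilarity constant below $1$ does not degrade the rate. Everything else — the two closed-form step sizes, the conversion between the iteration count $K$ and the sample count $T_\eps$ dictated by the oracle (via $T_\eps=K$ in Setting~\ref{setting1} and $T_\eps=NK$ in Setting~\ref{setting2}), and the treatment of the exponentially decaying transient through $\tilde{\cO}$ — is bookkeeping layered on top of Theorem~\ref{thm:wga}.
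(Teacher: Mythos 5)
Your proposal follows the paper's route for Part~1 and for the first bound of Part~2: the paper also obtains Theorem~\ref{thm:wga_sample_appendix} by substituting $\lambda_j=\mathds{1}_{\set{b_{ij}<\eps/2}}/\cN_i^\eps(2b)$ into Theorem~\ref{thm:wga}, using exactly the two observations you isolate ($\sum_j\lambda_j^2=1/\cN_i^\eps(2b)$ and $\sum_j\lambda_jb_{ij}\leq\eps/2$), and then balancing $K$ against the statistical term with $T_\eps=K$ or $T_\eps=NK$ according to the oracle. The one place where you diverge is the $m\leq 1/2$ refinement, and you are right that this is where the substance lies: it cannot be extracted from the stated form of Theorem~\ref{thm:wga2}, because the conversion $f_i(x^K)-f_i(x_i^\star)\leq\frac{1}{2\mu}\NRM{\nabla f_i(x^K)}^2\leq\frac{2L(m+1)}{\mu}\big(f^\lambda(x^K)-f^\lambda(x^\lambda)\big)+\dots$ is what costs the extra factor of $\kappa$, for any $m$. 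The paper does not redo the descent analysis as you propose; it simply invokes Theorem~1 of \citet{chayti2021linear}, applied to the restricted agent set $\set{j:b_{ij}\leq\eps}$ with agent $0$ taken to be agent $i$ and $\alpha=1/\cN_i^\eps(2b)$. Your alternative --- running the biased-SGD recursion directly on $f_i$ and absorbing the relative bias term $m\NRM{g_\lambda(x)}^2$ into the one-step descent when $m\leq 1/2$ --- is the correct mechanism (it is essentially what the cited theorem encapsulates), and the key point, which you should make explicit if you carry it out, is that the descent inequality must be written for $f_i$ itself rather than for $f^\lambda$ followed by a gradient-norm conversion; that is precisely where the saved power of $\kappa$ comes from. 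As written, however, this step remains a sketch: you would still need to verify that the additive bias $\sum_j\lambda_j\tilde b_{ij}=\mu\sum_j\lambda_jb_{ij}$ feeds into the recursion as an $O(\eps)$ floor after dividing by $\mu$, and that the contraction survives with the halved effective gradient. The remaining bookkeeping (step sizes, transient term, oracle accounting) matches the paper.
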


In Theorem~\ref{thm:wga2} upper-bound, the first term is the optimization term, and vanishes exponentially quickly in front of the second and third ones (if they are non null), and is not reminiscent of our personalized problem. The second is the noise term (referred to as the statistical term), while the third is a bias term, making appear as advertised in the introduction a \emph{bias-variance} trade-off. In Theorem~\ref{thm:wga1}, only the statistical and the bias terms are present. In both cases, a right choice of $\lambda_{ij}$ matches the lower bound of Theorem~\ref{thm:lower_afo} in terms of sample complexity, for $\eps>0$ small enough such that the optimization term vanishes in front of the statistical one. In Theorem~\ref{thm:wga1}, $D$ plays the role of $r$ in the lower bound (and can be more restrictive). Parameters $\mu,L$ in Theorem~\ref{thm:wga2} can take any value, while they are fixed in our lower bounds (with $\kappa=1$).

For the two proofs below, we write
\begin{equation*}
    g_\lambda^k(x)=\sum_{j=1}^N\lambda_jG^k(x)_j\,.
\end{equation*}

\subsection{Proof of Theorem~\ref{thm:wga1}}

Assume first that the $f_j$ are differentiable.
Observe first that under noise assumption~\eqref{hyp:noise2} and with the asynchronous oracle:
\begin{equation*}
    \esp{\NRM{g_\lambda^k(x)}^2}\leq \sum_{j=1}^NN\lambda_j^2B^2\,,\quad x\in\R^d\,.
\end{equation*}
We now proceed following classical SGD analysis, but with biased gradients here. Denote $e_i^k=\esp{\NRM{x^k-x_i^\star}^2}$. We have:
\begin{align*}
    e_i^{k+1}&=e_i^k-2\esp{\langle \sum_{j=1}^N\lambda_j\nabla f_j(x^k),x^k-x_i^\star\rangle} +\eta^2\esp{\NRM{g_\lambda^k}(x^k)}\\
    &\leq e_i^k -2\eta \esp{\langle \nabla f(x^k),x^k-x_i^\star\rangle} + \boldsymbol{2\eta\esp{\langle \nabla f(x^k)- \sum_{j=1}^N\lambda_j\nabla f_j(x^k),x^k-x_i^\star\rangle}} + \eta^2\sum_{j=1}^NN\lambda_j^2B^2\,,
\end{align*}
where the bold term is the bias term, that we bound by $D\sum_j\lambda_j\sqrt{\Tilde{b}_{ij}}=\sum_j\lambda_jb_{ij}$ using bias assumption~\eqref{eq:hyp:bias2} and $\tilde{b}_{ij}=(b_{ij}/D)^2$. Then, since $-2\eta \esp{\langle \nabla f(x^k),x^k-x_i^\star\rangle}\leq -2\eta \E F_i^k$, we have, summing the above inequality for $1\leq k\leq K$:
\begin{equation*}
    \sum_{k=0}^{K-1}F_i^k \leq \frac{e_i^0}{2\eta} + K\frac{\eta}{2} NB^2\sum_{j=1}^N\lambda_j^2 + K\sum_{j=1}^N\lambda_jb_{ij}\,.
\end{equation*}
We obtain the desired result by dividing by $K$ and for the choice of $\eta$ as in Theorem~\ref{thm:afa1}.
In the case where we have access to subdifferentials only, the proof stays identical (as in Lemma~\ref{lem:sgd}).

\subsection{Proof of Theorem~\ref{thm:wga2}}

Denoting $f^\lambda(x)=\sum_{j=1}^N\lambda_jf_j(x)$, observe that $f^\lambda$ is $\mu$-strongly convex and $L$-smooth, and for all $x\in\R^d$, under the synchronous oracle and noise assumption~\eqref{hyp:noise1}:
\begin{equation}
    \begin{aligned}
    &\esp{g_\lambda^k(x)}=\nabla f^\lambda(x)\,,\\
    &\esp{\NRM{g_\lambda^k(x)-\nabla f^\lambda(x)}^2}\leq \sigma^2\sum_{j=1}^N\lambda_j^2\,.
    \end{aligned}
\end{equation}
Applying Lemma~\ref{lem:sgd_sc} (SGD under strong convexity and smoothness assumptions), we have, where $x^\lambda$ minimizes $f^\lambda$:
\begin{equation*}
    f^\lambda(x^k)-f^\lambda(x^\lambda)\leq F_\lambda^0 e^{-\frac{ k}{2\kappa}} + \Tilde{\mathcal{O}}\left(\frac{L\sigma^2}{\mu^2 k} \sum_{1\leq j\leq N} \lambda_j^2\right)\,.
\end{equation*}
Then, under either bias assumption~\eqref{hyp:bias2} or~\eqref{hyp:bias3}, we have:
\begin{align*}
    \NRM{\nabla f_i(x^k)}^2&\leq 2\NRM{\nabla f_i(x^k)-\nabla f^\lambda(x^k)}^2+2\NRM{\nabla f^\lambda(x^k)}^2\\
    &\leq 2 ( m\NRM{\nabla f^\lambda(x^k)}^2 + \sum_{j=1}^N \lambda_{ij}\tilde{b}_{ij})+ 2\NRM{\nabla f^\lambda(x^k)}^2\\
    &\leq 2\sum_{j=1}^N\lambda_{ij}\tilde{b}_{ij}+ 4L(m+1) (f^\lambda(x^k)-f^\lambda(x^\lambda))\,.
\end{align*}
We conclude using $f_i(x^k)-f_i(x_i^\star)\leq \frac{1}{2\mu}\NRM{\nabla f_i(x^k)}^2$ and $\tilde{b}_{ij}=\mu b_{ij}$.

This result is valid under any $m\geq0$, and gives the first part of Theorem~\ref{thm:wga_sample2_appendix}. If $m\leq 1/2$, one can use Theorem 1 in~\citet{chayti2021linear} with the set of agents $j$ such that $b_{ij}\leq \eps$, agent $0$ is agent $i$, with $\alpha=1/\cN_i^\eps(2b)$, to obtain the sample complexity without the $\kappa$ factor.

\section{\emph{All-for-all} upper-bounds}

We first begin with the following simple lemma.
\begin{lemma}\label{lem:bias}
    If $\Lambda$ is a stochastic matrix and if bias assumption~\eqref{hyp:bias1} holds ($f_i(x_j^\star)-f_i(x_i^\star)\leq b_{ij}$ for all $1\leq i,j\leq N$):
    \begin{equation*}
            f^\Lambda(x^\Lambda)-\Bar{f}(x^\star)\leq \frac{1}{N}\sum_{1\leq i,j\leq N}\lambda_{ij}b_{ij}\,.
    \end{equation*}
\end{lemma}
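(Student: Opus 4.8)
The plan is to exploit the optimality of $x^\Lambda$ together with convexity of the $f_i$ and the row-stochasticity of $\Lambda$. Since $x^\Lambda$ minimizes $f^\Lambda$, I would first bound $f^\Lambda(x^\Lambda) \leq f^\Lambda(x^\star)$ by evaluating $f^\Lambda$ at the stacked vector of individual minimizers $x^\star = (x_i^\star)_{1\leq i\leq N}$. Unfolding the definitions $f^\Lambda(y) = \Bar{f}(\Lambda y)$ and $\Bar{f}(x) = \frac{1}{N}\sum_i f_i(x_i)$, and writing the $i$-th block of $\Lambda x^\star$ as $\sum_j \lambda_{ij} x_j^\star$, this reads
\begin{equation*}
f^\Lambda(x^\Lambda) \leq \frac{1}{N}\sum_{i=1}^N f_i\Big(\sum_{j=1}^N \lambda_{ij} x_j^\star\Big)\,.
\end{equation*}

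Next I would apply Jensen's inequality to each summand. Because $\Lambda$ is row-stochastic, the weights $(\lambda_{ij})_j$ form a convex combination for each fixed $i$, and convexity of $f_i$ (guaranteed under both Settings) gives $f_i\big(\sum_j \lambda_{ij}x_j^\star\big) \leq \sum_j \lambda_{ij} f_i(x_j^\star)$. Substituting and then invoking bias assumption~\eqref{hyp:bias1} in the form $f_i(x_j^\star) \leq f_i(x_i^\star) + b_{ij}$ yields
\begin{equation*}
f^\Lambda(x^\Lambda) \leq \frac{1}{N}\sum_{i=1}^N \sum_{j=1}^N \lambda_{ij}\big(f_i(x_i^\star) + b_{ij}\big)\,.
\end{equation*}

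Finally, using $\sum_j \lambda_{ij}=1$ to collapse the first term into $\frac{1}{N}\sum_i f_i(x_i^\star) = \Bar{f}(x^\star)$ and rearranging produces exactly $f^\Lambda(x^\Lambda) - \Bar{f}(x^\star) \leq \frac{1}{N}\sum_{i,j}\lambda_{ij}b_{ij}$, as claimed. There is no genuine obstacle here: the only ingredients are the optimality of $x^\Lambda$, the convexity of the $f_i$, and the two defining properties of a stochastic $\Lambda$ (nonnegativity and unit row sums). The single point requiring mild care is the bookkeeping of the block structure of $\Lambda x^\star \in \R^{N\times d}$, ensuring the $i$-th row is indeed the convex combination $\sum_j \lambda_{ij}x_j^\star$ to which Jensen is applied.
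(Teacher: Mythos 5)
Your proposal is correct and follows essentially the same route as the paper's own proof: optimality of $x^\Lambda$ evaluated at $x^\star$, Jensen's inequality on each $f_i$ using the row-stochasticity of $\Lambda$, and then assumption~\eqref{hyp:bias1} together with $\sum_j\lambda_{ij}=1$ to recover $\Bar{f}(x^\star)$. No discrepancies to report.
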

\begin{proof}
Writing the optimality of $x^\Lambda$ gives:
\begin{align*}
    f^\Lambda(x^\Lambda)&\leq f^\Lambda(x^\star)\\
    &=\frac{1}{N}\sum_if_i(\sum_j\lambda_{ij}x_j^\star)\\
    &\leq \frac{1}{N}\sum_{1\leq i,j\leq N}\lambda_{ij}f_i(x_j^\star)\,,
\end{align*}
where we used convexity of each $f_i$. Then, subtracting $\Bar{f}(x^\star)$ and using stochasticity of $\Lambda$:
\begin{align*}
    f^\Lambda(x^\Lambda)-\Bar{f}(x^\star)\leq \frac{1}{N}\sum_{1\leq i,j\leq N}\lambda_{ij}(f_i(x_j^\star)-f_i(x_i^\star))\,.
\end{align*}
\end{proof}

\subsection{Proof of Theorem~\ref{thm:afa1}}

Using Lemma~\ref{lem:bias} and the proof sketch ($f^\Lambda(y^k)=\Bar{f}(x^k)$ and the bias variance decomposition):
\begin{equation*}
    \Bar{f}(x^k)-\Bar{f}(x^\star)\leq f^\Lambda(y^k)-f^\Lambda(x^\Lambda) + \frac{1}{N}\sum_{1\leq i,j\leq N}\lambda_{ij}b_{ij}\,,
\end{equation*}
where $(y^k)$ is generated using simple SGD on $f^\Lambda$:
\begin{equation*}
    y^{k+1}=y^k-\eta\nabla G^k_\Lambda(y^k)\,,
\end{equation*}
where
\begin{equation*}
    G^k_\Lambda(y)=\big(\lambda_{i_kj}g_{i_k}^k((\Lambda y^k)_{i_k}))\big)_{1\leq j\leq N}\,.
\end{equation*}
$f^\Lambda$ is convex, and we have under noise assumption~\eqref{hyp:noise2} for the asynchronous oracle, under differentiable assumptions:
\begin{align*}
    &\esp{G^k_\Lambda(y)}=\nabla f^\Lambda(y)\,,\\
    &\esp{\NRM{G^k_\Lambda(y)}^2}\leq \frac{B^2}{N} \sum_{1\leq i,j\leq N}\lambda_{ij}^2\,.
\end{align*}
If we only have subdifferentials, we still have $\esp{G^k_\Lambda(y)}\in\partial f^\Lambda(y)$. Using Lemma~\ref{lem:sgd} (SGD under these assumptions), we obtain the desired result.

\subsection{Proof of Theorem~\ref{thm:afa2}}

Under noise assumption~\eqref{hyp:noise1} and for the synchronous oracle, we stil have:
\begin{equation*}
    y^{k+1}=y^k-\eta\nabla G^k_\Lambda(y^k)\,,
\end{equation*}
for
\begin{equation*}
    G^k_\Lambda(y)=\frac{1}{N}\big(\sum_{i=1}^N\lambda_{ij}g_{i}^k((\Lambda y^k)_{i}))\big)_{1\leq j\leq N}\,,
\end{equation*}
that verifies:
\begin{align*}
    &\esp{G^k_\Lambda(y)}=\nabla f^\Lambda(y)\,,\\
    &\esp{\NRM{G^k_\Lambda(y)-\nabla f^\lambda(y)}^2}\leq \frac{\sigma^2}{N^2} \sum_{1\leq i,j\leq N}\lambda_{ij}^2\,.
\end{align*}
The function $f^\Lambda$ is however not necessarily strongly convex. However, since $\nabla^2f^\Lambda(y)=\Lambda^\top \nabla^2\Bar{f}(\Lambda y)\Lambda$ and $\Bar{f}$ is $L/N$-smooth and $\mu/N$-strongly convex, $f^\Lambda$ is $L/N$-relatively smooth and $\mu/N$-relatively strongly convex \citep{bauschke2017relative} with respect to $\frac{1}{2}\NRM{y}_W^2=\frac{1}{2}y^\top Wy$. Note also that the spectral radius of $W$ is $1$, since $\Lambda$ is stochastic. Instead of using stochastic Bregman gradient descent (\emph{e.g.}~\citet{pmlr-v139-dragomir21a}), we use Lemma~\ref{lem:sgd_sc}: classical SGD that naturally generalizes to relative smoothness and strong convexity assumptions, when the mirror map is quadratic.

\subsection{Time-adaptive variant\label{app:time_adaptive}}

\begin{algorithm}[h]
\caption{\emph{All-for-all} algorithm: Time-adaptive variant}
\label{algo:afa_time}
\begin{algorithmic}[1]
\STATE Stepsizes $(\eta_k)_{k\geq0}$, matrixes $(W^{(k)})_{k\geq0}$ 
\vspace{0.5ex}
\STATE Initialization $x_1^0=\ldots=x_N^0\in\R^d$
\vspace{0.5ex}
\FOR{$k=0,1,2,\ldots$}
\vspace{0.5ex}
\STATE Agents $j\in S^k$ (activated agents) compute stochastic gradients $g_j^k(x_j^k)$ and broadcast it to all agents $i$ such that $W^{(k)}_{ij}>0$.
\vspace{0.5ex}
\STATE For $i=1,\ldots,N$, update $$x_i^{k+1}=x_i^k-\eta_k\sum_{j\in S^k}W^{(k)}_{ij}g^k_j(x^k_j)$$
\ENDFOR  
\end{algorithmic}
\end{algorithm}

\begin{proposition}[Time-adaptive \emph{all-for-all}]
    Assume that the same assumptions as in Theorem~\ref{thm:afa1} hold.
    For any $\eps>0$, denote $\Lambda_\eps=(\frac{\mathds{1}_{2b_{ij}\leq\eps}}{\cN_i^\eps(2b)}$, $K_\eps=\frac{4D^2B^2}{\eps^2}\sum_{i=1}^N\frac{1}{\cN_i^\eps(2b)}$ and $\eta(\eps)=\frac{2ND^2}{K_\eps B^2\sum_i1/\cN_i^\eps(2b)}$. For any $k\geq0$, there exists $p_k\geq 0$ such that $\sum_{q=0}^{p_k-1}T_{2^{-q}}\leq k <\sum_{q=0}^{p_k}T_{2^{-q}}$, and choose:
    \begin{align*}
        \eta_k=\eta(2^{-p_k})\,,\quad W^{(k)}=\Lambda_{2^{-p_k}}\Lambda_{2^{-p_k}}^\top\,.
    \end{align*}
    Then, for any $\eps>0$, the iterates generated by Algorithm~\ref{algo:afa_time} reach averaged precision $\eps$ for a number of data item sampled from personal distribution of:
    \begin{equation*}
                T_\eps\leq \frac{16D^2B^2}{\eps^2}\sum_{i=1}^N\frac{1}{\cN_i^\eps(2b)}\,,
    \end{equation*}
\end{proposition}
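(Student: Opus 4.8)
The plan is to run Algorithm~\ref{algo:afa_time} as a sequence of \emph{phases} and to invoke the fixed-parameter guarantee of Theorem~\ref{thm:afa_sample}.1 on each phase separately, i.e. a \emph{doubling trick}. Phase $q\geq0$ is the block of iterations on which $p_k=q$; by construction of $p_k$ it consists of exactly $T_{2^{-q}}=K_{2^{-q}}$ iterations, during which the step size $\eta_k=\eta(2^{-q})$ and the matrix $W^{(k)}=\Lambda_{2^{-q}}\Lambda_{2^{-q}}^\top$ are held constant. Since the parameters are frozen within a phase, phase $q$ is \emph{exactly} an instance of the fixed-parameter algorithm (Algorithm~\ref{algo:afa}) with weights $\Lambda_{2^{-q}}$ and step size $\eta(2^{-q})$, initialized at the terminal iterate of phase $q-1$. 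As the asynchronous oracle is used, one iteration consumes one sample, so the sample count of phase $q$ equals its iteration count $K_{2^{-q}}$. First I would record that, applied to phase $q$, Theorem~\ref{thm:afa_sample}.1 yields a time-and-agent averaged error $\esp{\frac{1}{K_{2^{-q}}}\sum_k F^k}\leq 2^{-q}$ over that phase; hence, letting $p_\eps$ be the smallest integer with $2^{-p_\eps}\leq\eps$, the running average returned at the end of phase $p_\eps$ has averaged precision at most $2^{-p_\eps}\leq\eps$.

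Next I would bound the total number of samples spent through the end of phase $p_\eps$, namely $T_\eps=\sum_{q=0}^{p_\eps}K_{2^{-q}}$. The key observation is that the per-phase costs grow geometrically: writing $\eps_q=2^{-q}$, monotonicity of $\eps\mapsto\cN_i^\eps(2b)$ gives $\cN_i^{\eps_{q+1}}(2b)\leq\cN_i^{\eps_q}(2b)$, whence
\begin{equation*}
\frac{K_{\eps_{q+1}}}{K_{\eps_q}}=\frac{\eps_q^2}{\eps_{q+1}^2}\cdot\frac{\sum_i 1/\cN_i^{\eps_{q+1}}(2b)}{\sum_i 1/\cN_i^{\eps_q}(2b)}\geq 4\,.
\end{equation*}
Consequently the geometric sum is dominated by its last term, $\sum_{q=0}^{p_\eps}K_{\eps_q}\leq\tfrac{4}{3}K_{\eps_{p_\eps}}$. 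It then remains to compare $K_{\eps_{p_\eps}}$ with the target budget $K_\eps$: by minimality of $p_\eps$ one has $\eps/2<2^{-p_\eps}\leq\eps$, so $(2^{-p_\eps})^{-2}<4\eps^{-2}$ and the neighborhood counts satisfy $\cN_i^{\eps/2}(2b)\leq\cN_i^{2^{-p_\eps}}(2b)\leq\cN_i^{\eps}(2b)$. Substituting these estimates into $K_{\eps_{p_\eps}}=4D^2B^2(2^{-p_\eps})^{-2}\sum_i 1/\cN_i^{2^{-p_\eps}}(2b)$ shows $K_{\eps_{p_\eps}}=\cO(K_\eps)$, so the doubling costs only a bounded multiplicative overhead, which yields the factor $16$ in place of the single-shot $4$.

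The step I expect to be the main obstacle is verifying that the hypotheses of Theorem~\ref{thm:afa_sample}.1 remain valid phase by phase, \emph{despite the reference minimizer changing with the phase}. The per-phase guarantee requires $D^2$ to bound $\NRM{x_i^0-x_i^{\Lambda_{2^{-q}}}}^2$ at the \emph{start} of each phase, where $x^{\Lambda_{2^{-q}}}$ minimizes $f^{\Lambda_{2^{-q}}}$ from~\eqref{eq:fLambda}, and this minimizer differs across phases. Under Setting~\ref{setting1} this is handled by the uniform boundedness assumption of Theorem~\ref{thm:afa1}: all iterates and all phase-minimizers lie in a ball of diameter controlled by $D$, so the initialization bound is preserved when phase $q$ starts from the endpoint of phase $q-1$. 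I would also stress that Theorem~\ref{thm:afa_sample}.1 certifies the averaged error $\frac{1}{K}\sum_k F^k$ rather than the final iterate, so the object returned and guaranteed at precision $\eps$ is the running average of the last phase, which is precisely the ``averaged precision'' of the statement; in particular no strong convexity is invoked in this setting.
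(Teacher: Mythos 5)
The paper gives no written proof of this proposition --- it only asserts in the main text that ``a doubling trick'' yields the time-adaptive guarantee --- and your proposal is exactly that intended argument: freeze the parameters on phases targeting precisions $2^{-q}$, invoke Theorem~\ref{thm:afa_sample}.1 on each phase, and sum a geometric series dominated by its last term. Your identification of the real technical point (the reference minimizer $x^{\Lambda_{2^{-q}}}$ changes between phases, so the bound $\NRM{x_i^{0}-x_i^{\Lambda}}^2\leq D^2$ must be re-established at the start of every phase from a uniform boundedness reading of the assumption) is also the right one, and is left implicit by the paper.

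One step of your accounting does not go through as written: the claim that $K_{2^{-p_\eps}}=\cO(K_\eps)$. From $\eps/2<2^{-p_\eps}\leq\eps$ you correctly get $(2^{-p_\eps})^{-2}<4\eps^{-2}$, but the neighborhood counts go the wrong way: $\cN_i^{2^{-p_\eps}}(2b)\leq\cN_i^{\eps}(2b)$, so $1/\cN_i^{2^{-p_\eps}}(2b)$ is only bounded above by $1/\cN_i^{\eps/2}(2b)$, which can exceed $1/\cN_i^{\eps}(2b)$ by a factor as large as $N$ (take many agents with $b_{ij}$ between $2^{-p_\eps-1}$ and $\eps/2$). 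What your argument actually delivers is $T_{\eps'}\leq \frac{4}{3}K_{\eps'}=\frac{16D^2B^2}{3(\eps')^2}\sum_i 1/\cN_i^{\eps'}(2b)$ for $\eps'$ on the dyadic grid $\{2^{-p}\}$, which does match the stated bound for such $\eps'$, or alternatively the stated bound for arbitrary $\eps$ with $\cN_i^{\eps}$ replaced by $\cN_i^{\eps/2}$ on the right-hand side. This imprecision is inherited from the proposition itself (which is stated without proof and with the same slack), so it is not a flaw specific to your approach, but the assertion ``$=\cO(K_\eps)$'' should be replaced by one of these two precise statements rather than left as if the off-grid case were a constant-factor matter.
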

Similarly, under strong-convexity and smoothness assumptions as in Theorem~\ref{thm:afa2}, one would obtain the corresponding sample complexity, up to a factor 2.

\subsection{Setting~\ref{setting2} under $\mu=0$\label{app:afa_convex}}

\begin{theorem}[All-for-all, convex case]\label{thm:afa_convex}
	Let $K>0$, $\eta>0$, and $W$ a symmetric non-negative random matrix of the form $W=\Lambda\Lambda^\top$ for some stochastic matrix $\Lambda=(\lambda_{ij})_{1\leq i,j\leq N}$.
	Let $(x_i^k)_{k\geq0,1\leq i\leq N}$ be generated with Algorithm~\ref{algo:afa}. Assume that bias assumption~\eqref{hyp:bias1} holds for some $(b_{ij})$.
	Under Setting \ref{setting2}, for $\mu=0$ (convex case), if $\eta=\frac{N^{3/2}D}{\sigma\sqrt{\sum_{i,j}\lambda_{ij}^2}}\wedge \frac{N}{2L}$, where $\NRM{x_i^0-x_i^\Lambda}\leq D$ for all $i$, we have:		
		\vspace{-5pt}
		\begin{align*}
			\esp{F^K}&\leq \frac{2LD^2}{K}+\sqrt{\frac{2D^2\sigma^2}{KN}\sum_{1\leq i,j\leq N}\lambda_{ij}^2} +\sum_{1\leq i,j\leq N}\lambda_{ij}b_{ij}\,.
			\vspace{-5pt}
		\end{align*}
	Consequently, for $\eps>0$, and a choice of $W=\Lambda\Lambda^\top$ for $\lambda_{ij}=\frac{\mathds{1}_\set{b_{ij}<\eps/2}}{\cN_i^\eps(2b)}$ and under the same assumptions, the \emph{all-for-all} algorithm reaches $\esp{F^K}\leq \eps$ for a total number $T_\eps$ of data item sampled from personal distributions of:
	\begin{equation*}
		T_\eps\leq \max\left( \frac{8D^2\sigma^2}{\eps^2}\sum_{i=1}^N \frac{1}{\cN_i^\eps(2b)},\, \frac{4NLD^2}{\eps}\right)
	\end{equation*}
	\end{theorem}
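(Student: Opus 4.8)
The plan is to follow the reduction introduced in the proof sketch of Theorem~\ref{thm:afa} and used in the proof of Theorem~\ref{thm:afa2}, replacing only the strongly convex SGD estimate by its $\mu=0$ counterpart. First I would recall that if $(y^k)$ is defined by $y^{k+1}=y^k-\eta G^k_\Lambda(y^k)$ with $G^k_\Lambda(y)=\frac{1}{N}\big(\sum_{i=1}^N\lambda_{ij}g_i^k((\Lambda y^k)_i)\big)_{1\leq j\leq N}$ and initialized so that $\Lambda y^0=x^0$, then $x^k=\Lambda y^k$ coincides with the iterates of Algorithm~\ref{algo:afa}, whence $\Bar{f}(x^k)=f^\Lambda(y^k)$ and $F^k=f^\Lambda(y^k)-\Bar{f}(x^\star)$. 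I would then split this along the usual bias--variance decomposition,
\[
F^k=\underbrace{\big(f^\Lambda(y^k)-f^\Lambda(x^\Lambda)\big)}_{\text{optimization}+\text{variance}}+\underbrace{\big(f^\Lambda(x^\Lambda)-\Bar{f}(x^\star)\big)}_{\text{bias}},
\]
and control the bias term directly with Lemma~\ref{lem:bias}, which only uses stochasticity of $\Lambda$, convexity of the $f_i$ and bias assumption~\eqref{hyp:bias1}, and is therefore independent of the noise/regularity regime.

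The core of the argument is the optimization+variance term. Exactly as in the proof of Theorem~\ref{thm:afa2}, under the synchronous oracle and noise assumption~\eqref{hyp:noise1} one checks that $\esp{G^k_\Lambda(y)}=\nabla f^\Lambda(y)$ and $\esp{\NRM{G^k_\Lambda(y)-\nabla f^\Lambda(y)}^2}\leq\frac{\sigma^2}{N^2}\sum_{i,j}\lambda_{ij}^2$, and that $f^\Lambda$ is convex and $\frac{L}{N}$-smooth relative to the quadratic mirror map $\frac12\NRM{\cdot}_W^2$ (now with $\mu=0$, so that no relative strong convexity is available). I would then invoke the $\mu=0$ instance of Lemma~\ref{lem:sgd_sc}, the smooth convex SGD estimate for a quadratic mirror map, which for the step size of the statement (calibrated to balance the two error terms) yields
\[
\esp{f^\Lambda(y^K)-f^\Lambda(x^\Lambda)}\leq\frac{2LD^2}{K}+\sqrt{\frac{2D^2\sigma^2}{KN}\sum_{i,j}\lambda_{ij}^2},
\]
where the initial-distance contribution is converted into $D$ through $\NRM{x^0-x^\Lambda}^2=\sum_i\NRM{x_i^0-x_i^\Lambda}^2\leq ND^2$. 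Adding the bias bound from Lemma~\ref{lem:bias} gives the first displayed inequality of the theorem.

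For the sample complexity I would use that the synchronous oracle spends $T_\eps=NK$ samples, and that the choice $\lambda_{ij}=\frac{\mathds{1}_{\{b_{ij}<\eps/2\}}}{\cN_i^\eps(2b)}$ is row-stochastic with $\sum_{i,j}\lambda_{ij}^2=\sum_i\frac{1}{\cN_i^\eps(2b)}$ and, since each retained index satisfies $b_{ij}\leq\eps/2$, makes the bias term of order $\eps$. It then suffices to take $K$ large enough that both remaining terms are of order $\eps$: imposing $\frac{2LD^2}{K}\lesssim\eps$ forces $T=NK\gtrsim\frac{4NLD^2}{\eps}$, while imposing the noise term $\lesssim\eps$ forces $T\gtrsim\frac{8D^2\sigma^2}{\eps^2}\sum_i\frac{1}{\cN_i^\eps(2b)}$; taking $K$ above the larger of the two thresholds yields $T_\eps\leq\max\big(\frac{8D^2\sigma^2}{\eps^2}\sum_i\frac{1}{\cN_i^\eps(2b)},\,\frac{4NLD^2}{\eps}\big)$.

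The main obstacle is the optimization+variance estimate in this degenerate, merely convex setting: since $\mu=0$ and $f^\Lambda$ is smooth only relative to the (generally rank-deficient) $W$-seminorm rather than the Euclidean norm, the contraction argument of the strongly convex proof of Theorem~\ref{thm:afa2} is unavailable. One must instead run the smooth convex SGD analysis with the quadratic mirror map $\frac12\NRM{\cdot}_W^2$, carefully tracking that the noise enters through $\frac{\sigma^2}{N^2}\sum_{i,j}\lambda_{ij}^2$ and that the relevant initial distance is measured in the $W$-geometry, equivalently $\NRM{x^0-x^\Lambda}^2\leq ND^2$ in the $x$-coordinates. Once this lemma is in place, the step-size calibration balancing the $O(LD^2/K)$ and $O\big(\sqrt{D^2\sigma^2\sum_{i,j}\lambda_{ij}^2/(NK)}\big)$ terms is the standard smooth-convex-SGD tuning and is routine.
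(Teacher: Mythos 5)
Your proposal follows essentially the same route as the paper: the reduction to the auxiliary iterates $y^k$ for $f^\Lambda$, the bias--variance split with Lemma~\ref{lem:bias} for the bias term, and the smooth convex SGD bound applied with the noise variance $\frac{\sigma^2}{N^2}\sum_{i,j}\lambda_{ij}^2$ and the relative smoothness of $f^\Lambda$, followed by the standard step-size balancing. The only slip is a label: the estimate you invoke is Lemma~\ref{lem:sgd_cs} (convex and smooth), not a ``$\mu=0$ instance'' of Lemma~\ref{lem:sgd_sc}, whose statement divides by $\mu$ and does not degenerate gracefully; since you correctly describe the convex-smooth bound itself, this does not affect the argument.
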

\begin{proof}
Combining Lemma~\ref{lem:sgd_cs} with the noise variance and regularity parameters of $F^\Lambda$.
\end{proof}

\section{Stochastic Optimization Toolbox}

\subsection{SGD under strongly convex and smooth assumptions}

\begin{lemma}[SGD, s.c. and smooth]\label{lem:sgd_sc} Define $\NRM{x}_A^2=x^\top A x$ for some non-negative and symmetric matrix $A$. Let $f:\cX\to\R$ $\mu$-relatively strongly convex and $L$-relatively smooth with respect to $\frac{1}{2}\NRM{x}_A^2$. Let $(f_t,g_t)_{t\geq0}$ be first order oracle calls such that for all $t\geq0$:
	\begin{equation*}\forall x\in\cX\,,\quad
		\left\{\begin{aligned}
			& \esp{f_t(x)}=f(x)\,,\\
			&\esp{g_t(x)}=\nabla f(x)\,,\\
			&\esp{\NRM{g_t(x)-\nabla f(x)}^2}\leq \sigma^2\,,
		\end{aligned}\right.
	\end{equation*}
	for some $\sigma>0$.
	Let $L_A$ be the largest eigenvalue of $A$, and assume that $L_A\leq1$ (our result generalizes to any $L_A$).
	Let $(x_t)_{t\geq0}$ be generated with:
	\begin{equation*}
		\forall t\geq0\,,\quad x^{t+1}=x^t-\eta g_t(x^t)\,,
	\end{equation*}
	for a fixed stepsize $\frac{1}{2L}\geq\eta>0$, and assume that all the iterates lie in $\cX$. Assume that $f$ is minimized over $\cX$ at some interior point $x^\star$. We have for any $T>0$:
	\begin{equation*}
		\esp{f(x^T)-f(x^\star)}\leq e^{-\eta\mu T}\big(f(x^0)-f(x^\star)\big) + \frac{\eta L\sigma^2}{\mu}\,.
	\end{equation*}
	For fixed $T>0$, setting $\eta=\min\big(1/(2L),\frac{1}{\mu T}\ln(\frac{f_0\mu^2T}{L\sigma^2})\big)$ gives:
	\begin{equation*}
		\esp{f(x^T)-f(x^\star)}\leq e^{-\frac{\mu}{2L}T}\big(f(x^0)-f(x^\star)\big)+ \frac{L\sigma^2}{\mu^2T}\ln\big(\frac{f_0\mu^2T}{L\sigma^2}\big)\,.
	\end{equation*}
	Thus, for fixed target precision $\eps>0$, using stepsize $\eta_\eps=\min\left(\frac{\mu\eps}{2L\sigma^2},\frac{1}{2L}\right)$ and setting $T_\eps=\lceil \ln\big(\eps^{-1}(f(x^0)-f(x^\star))\big)\frac{1}{\eta_\eps\mu}\rceil$, we have:
	\begin{equation*}
		f\left(\frac{1}{T_\eps}\sum_{t<T_\eps}x^t\right)-f(x^\star)\leq \eps\,,
	\end{equation*}
	with a number of oracle calls
	\begin{equation*}
		T_\eps\leq \max\left( \frac{2L\sigma^2}{\eps\mu^2},\frac{2L}{\mu}\right)\ln\big(\eps^{-1}(f(x^0)-f(x^\star))\big)\,.
	\end{equation*}
\end{lemma}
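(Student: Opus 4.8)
The plan is to run the classical ``descent lemma plus PL inequality'' argument for SGD, exploiting the fact that the mirror map $h(x)=\frac12\NRM{x}_A^2$ is quadratic, so that its Bregman divergence is exactly $\frac12\NRM{y-x}_A^2$ and the relative smoothness/convexity inequalities become quadratic bounds in the $A$-norm. Throughout I write $\delta_t=f(x^t)-f(x^\star)$ and let $\E_t$ denote the conditional expectation given $x^t$.

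First I would establish a one-step inequality. Applying $L$-relative smoothness to the update $x^{t+1}=x^t-\eta g_t(x^t)$ gives $f(x^{t+1})\le f(x^t)-\eta\langle \nabla f(x^t),g_t(x^t)\rangle+\frac{L\eta^2}{2}\NRM{g_t(x^t)}_A^2$. Taking $\E_t$ and using $\E_t[g_t(x^t)]=\nabla f(x^t)$, the cross term becomes $-\eta\NRM{\nabla f(x^t)}^2$, while $\E_t\NRM{g_t(x^t)}_A^2$ splits into a signal part $\NRM{\nabla f(x^t)}_A^2$ and a noise part $\E_t\NRM{g_t-\nabla f}_A^2$. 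Here the hypothesis $L_A\le1$ is used twice: $\NRM{v}_A^2\le L_A\NRM{v}^2\le\NRM{v}^2$ lets me bound the signal part by $\NRM{\nabla f(x^t)}^2$ and the noise part by $L_A\sigma^2\le\sigma^2$. With $\eta\le 1/(2L)$ one has $\frac{L\eta^2}{2}\le\frac{\eta}{4}$, so the coefficient of $\NRM{\nabla f(x^t)}^2$ is at most $-\frac{\eta}{2}$, yielding
$$\E_t[\delta_{t+1}]\le \delta_t-\frac{\eta}{2}\NRM{\nabla f(x^t)}^2+\frac{L\eta^2\sigma^2}{2}.$$

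Next I would convert the gradient term into a contraction via a PL inequality furnished by $\mu$-relative strong convexity. Minimizing the lower bound $f(y)\ge f(x)+\langle\nabla f(x),y-x\rangle+\frac\mu2\NRM{y-x}_A^2$ over $y$ gives $\delta_t\le\frac1{2\mu}\NRM{\nabla f(x^t)}_{A^{-1}}^2$ on $\mathrm{range}(A)$, where $\nabla f$ lives since $\nabla f(x^\star)=0$; combined with $L_A\le1$ this produces $\NRM{\nabla f(x^t)}^2\ge 2\mu\,\delta_t$. Substituting into the previous display gives the affine recursion $\E_t[\delta_{t+1}]\le(1-\eta\mu)\delta_t+\frac{L\eta^2\sigma^2}{2}$. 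Unrolling it and summing the geometric series $\sum_{k\ge0}(1-\eta\mu)^k=\frac1{\eta\mu}$, then using $(1-\eta\mu)^T\le e^{-\eta\mu T}$, yields $\E[\delta_T]\le e^{-\eta\mu T}\delta_0+\frac{L\eta\sigma^2}{2\mu}$, which implies the first displayed bound. The three corollaries are then book-keeping: the step size $\eta=\min(1/(2L),\frac1{\mu T}\ln(\cdots))$ balances the exponentially decaying optimization term against the statistical term $\Tilde{\cO}(L\sigma^2/(\mu^2T))$; and for fixed target $\eps$, choosing $\eta_\eps=\min(\mu\eps/(2L\sigma^2),1/(2L))$ and $T_\eps=\lceil \ln(\eps^{-1}\delta_0)/(\eta_\eps\mu)\rceil$ forces each term below $\eps/2$, giving $T_\eps\le\max(2L\sigma^2/(\eps\mu^2),2L/\mu)\ln(\eps^{-1}\delta_0)$; convexity of $f$ finally lets me replace $x^{T_\eps}$ by the Cesàro average $\frac1{T_\eps}\sum_{t<T_\eps}x^t$.

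The main obstacle is the reconciliation between the \emph{Euclidean} gradient step and the \emph{relative} ($A$-norm) geometry. A priori relative strong convexity is weaker than Euclidean strong convexity, so a careless argument would degrade the PL constant by $\lambda_{\min}(A)$ and spoil the clean rate $e^{-\eta\mu T}$. The quadratic mirror map is exactly what rescues the argument: it makes the descent lemma produce the $A$-norm of the step, and $L_A\le1$ then allows trading $A$-norms for Euclidean norms in the favourable direction in the descent inequality while retaining the PL constant $\mu$ on $\mathrm{range}(A)$. Making the constants line up through this trade-off, rather than accumulating stray factors of $L_A$ or $\lambda_{\min}(A)$, is the delicate point; I would therefore keep the two contributions to $\E_t\NRM{g_t}_A^2$ separate, so that the noise term contributes exactly $\sigma^2$ and the signal term is consumed by the PL inequality.
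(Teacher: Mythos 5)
Your proposal follows the paper's proof step for step: the relative-smoothness descent inequality, the conversion of $A$-norms to Euclidean norms via $L_A\leq1$, a Polyak--Lojasiewicz (PL) inequality supplied by relative strong convexity, the unrolled affine recursion, and the same bookkeeping for the two step-size choices. The one-step inequality, the unrolling, and the final sample-complexity computations are all sound.

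The gap sits exactly where you flag the ``delicate point'', and your patch does not close it. From $\mu$-relative strong convexity you correctly get $2\mu\,\delta_t\leq\NRM{\nabla f(x^t)}_{A^{-1}}^2$ (pseudo-inverse on ${\rm range}(A)$). But $L_A\leq1$ means $A\preceq I$, hence $A^{-1}\succeq I$ on ${\rm range}(A)$ and $\NRM{\nabla f(x^t)}_{A^{-1}}^2\geq\NRM{\nabla f(x^t)}^2$: this inequality points the wrong way, and the two displays cannot be chained into $\NRM{\nabla f(x^t)}^2\geq2\mu\,\delta_t$. What does follow is $\NRM{\nabla f(x^t)}^2\geq2\mu\,\lambda_{\min}^+(A)\,\delta_t$, i.e.\ precisely the degradation by the smallest positive eigenvalue that you set out to avoid. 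The Euclidean PL inequality you assert is genuinely false under the lemma's hypotheses: for $A=\epsilon I$ and $f=\frac{\mu}{2}\NRM{\cdot}_A^2$, which is $\mu$-relatively strongly convex and $\mu$-relatively smooth with $L_A=\epsilon\leq1$, one computes $\NRM{\nabla f(x)}^2=2\mu\epsilon\,\big(f(x)-f(x^\star)\big)$. To be fair, the paper's own proof makes an equivalent unjustified assertion at the same spot (it invokes $\NRM{\nabla f(x^t)}_A^2\geq2\mu\,\delta_t$ ``using relative strong convexity'', which does not follow either); in the paper's application the needed bound can be recovered from the structure $A=\Lambda\Lambda^\top$ and $\nabla f(y)=\Lambda^\top\nabla\bar{f}(\Lambda y)$, but as an abstract lemma the statement requires an additional hypothesis on $\lambda_{\min}^+(A)$ or on the range of the gradients. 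A secondary remark: the final claim concerns the Ces\`aro average, and Jensen applied to the last-iterate bounds only yields an optimization term decaying like $\delta_0/(T\eta\mu)$ rather than exponentially, so that step also needs more care (the paper leaves it implicit as well).
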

\begin{proof}
	For some $t\geq0$, denoting $f_t=\esp{f(x^{t+1})-f(x^\star)}$, using relative smoothness, unbiasedness of the stochastic gradients and then relative strong convexity:
	\begin{align*}
		f_{t+1}-f_t&\leq -\eta \esp{\NRM{\nabla f(x^t)}^2} + \frac{\eta^2 L}{2}\esp{\NRM{g_t}_A^2}\\
		&\leq -\eta \esp{\NRM{\nabla f(x^t)}^2} + \frac{\eta^2 LL_A}{2}\esp{\NRM{g_t}^2}\\
		&\leq -\eta\left(1-\frac{\eta LL_A}{2}\right) \esp{\NRM{\nabla f(x^t)}^2} +\frac{\eta^2LL_A\sigma^2}{2}\,.
	\end{align*}
Using relative strong convexity of $f$, we have: 
\begin{align*}
    \NRM{\nabla f(x^t)}^2&\geq \frac{1}{L_A}\NRM{\nabla f(x^t)}_A^2\\
    &\leq \frac{2\mu}{L_A}f_t\,,
\end{align*}
yielding, for $\eta<1/(LL_A)$
\begin{equation*}
    f_{t+1}-f_t\leq -2\eta\frac{\mu}{L_A}f_t+\frac{\eta^2LL_A\sigma^2}{2}\,.
\end{equation*}
Then, for some $T>0$ and since $L_A\leq 1$, sum the above inequality multiplied by $\left(1-\eta\mu\right)^{-t-1}$:
\begin{align*}
	\sum_{0\leq t\leq T-1} \left(1-\eta\mu\right)^{-t-1}f_{t+1}-\left(1-\eta\mu\right)^{-t}f_t &\leq \frac{\eta^2L\sigma^2}{2} \sum_{0\leq t\leq T-1} \left(1-\eta\mu\right)^{-t-1}\\
	&\leq \frac{\eta^2L\sigma^2}{2} \frac{\left(1-\eta\mu\right)^{-t-1}}{\eta\mu}\,,
\end{align*}
leading to the desired result.
\end{proof}

\subsection{SGD under smoothness and convexity assumptions}

\begin{lemma}[SGD, convex and smooth]\label{lem:sgd_cs} Let $f:\cX\to\R$  convex and $L$-smooth. Let $(f_t,g_t)_{t\geq0}$ be first order oracle calls such that for all $t\geq0$:
	\begin{equation*}\forall x\in\cX\,,\quad
		\left\{\begin{aligned}
			& \esp{f_t(x)}=f(x)\,,\\
			&\esp{g_t(x)}=\nabla f(x)\,,\\
			&\esp{\NRM{g_t(x)-\nabla f(x)}^2}\leq \sigma^2\,,
		\end{aligned}\right.
	\end{equation*}
	for some $\sigma>0$. Let $(x_t)_{t\geq0}$ be generated with:
	\begin{equation*}
		\forall t\geq0\,,\quad x^{t+1}=x^t-\eta g_t(x^t)\,,
	\end{equation*}
	for a fixed stepsize $\frac{1}{2L}\geq\eta>0$, and assume that all the iterates lie in $\cX$. Assume that $f$ is minimized over $\cX$ at some interior point $x^\star$. 
	Denote, for $T>0$, $\bar{x}^T=\frac{1}{T}\sum_{0\leq t<T}x^t$.
	We have for any $T>0$:
	\begin{equation*}
		\esp{f(\bar{x}^T)-f(x^\star)}\leq \frac{\NRM{x^0-x^\star}^2}{\eta T} + \eta\sigma^2\,.
	\end{equation*}
	For fixed $T>0$, setting $\eta=\min\Big(1/(2L)),\frac{\NRM{x^0-x^\star}}{\sigma\sqrt{T}}\Big)$ gives:
	\begin{equation*}
		\esp{f(x^T)-f(x^\star)}\leq \frac{2\NRM{x^0-x^\star}\sigma}{\sqrt{T}} + \frac{2L\NRM{x^0-x^\star}^2}{T}\,.
	\end{equation*}
	Thus, for fixed target precision $\eps>0$,  we have:
	\begin{equation*}
		f\left(\frac{1}{T_\eps}\sum_{t<T_\eps}x^t\right)-f(x^\star)\leq \eps\,,
	\end{equation*}
	with a number of oracle calls
	\begin{equation*}
		T_\eps\leq 4\NRM{x^0-x^\star}^2\big(4\sigma^2\eps^{-2}+L\eps^{-1}\big) \,.
	\end{equation*}
\end{lemma}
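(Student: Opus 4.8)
The plan is to run the standard one-step stochastic gradient descent analysis, tracking the squared distance $\NRM{x^t-x^\star}^2$ to the minimizer, and then to convert the resulting bound on the averaged function gap into a bound at the averaged iterate $\bar{x}^T$ via Jensen's inequality. First I would expand the update $x^{t+1}=x^t-\eta g_t(x^t)$ to get
\[
\NRM{x^{t+1}-x^\star}^2=\NRM{x^t-x^\star}^2-2\eta\langle g_t(x^t),x^t-x^\star\rangle+\eta^2\NRM{g_t(x^t)}^2\,.
\]
Taking the conditional expectation given $x^t$ and using unbiasedness $\esp{g_t(x^t)\mid x^t}=\nabla f(x^t)$ removes the bias in the inner product, while for the quadratic term I would use the bias--variance split $\esp{\NRM{g_t(x^t)}^2\mid x^t}=\NRM{\nabla f(x^t)}^2+\esp{\NRM{g_t(x^t)-\nabla f(x^t)}^2\mid x^t}\leq\NRM{\nabla f(x^t)}^2+\sigma^2$.

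Next come the two regularity ingredients. Convexity gives $\langle\nabla f(x^t),x^t-x^\star\rangle\geq f(x^t)-f(x^\star)$, and $L$-smoothness together with $\nabla f(x^\star)=0$ (the minimizer being interior) yields the standard consequence $\NRM{\nabla f(x^t)}^2\leq 2L(f(x^t)-f(x^\star))$. Substituting these produces
\[
\esp{\NRM{x^{t+1}-x^\star}^2\mid x^t}\leq\NRM{x^t-x^\star}^2-2\eta(1-\eta L)(f(x^t)-f(x^\star))+\eta^2\sigma^2\,.
\]
Since $\eta\leq 1/(2L)$ we have $1-\eta L\geq 1/2$, so the middle coefficient is at least $\eta$. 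Taking total expectation, summing over $t=0,\dots,T-1$, telescoping the distance terms and discarding the non-negative $\esp{\NRM{x^T-x^\star}^2}$ gives $\eta\sum_{t<T}\esp{f(x^t)-f(x^\star)}\leq\NRM{x^0-x^\star}^2+T\eta^2\sigma^2$. Dividing by $\eta T$ and invoking Jensen on the convex $f$ yields the first displayed bound $\esp{f(\bar{x}^T)-f(x^\star)}\leq\NRM{x^0-x^\star}^2/(\eta T)+\eta\sigma^2$.

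Finally I would tune the stepsize. With $\eta=\min\big(1/(2L),\NRM{x^0-x^\star}/(\sigma\sqrt{T})\big)$ I would use the elementary bound $\min(a,b)^{-1}\leq a^{-1}+b^{-1}$, so that $1/\eta\leq 2L+\sigma\sqrt{T}/\NRM{x^0-x^\star}$, together with $\eta\leq\NRM{x^0-x^\star}/(\sigma\sqrt{T})$; the two $\sigma/\sqrt{T}$ contributions combine into the $2\NRM{x^0-x^\star}\sigma/\sqrt{T}$ term while the $2L$ contribution gives $2L\NRM{x^0-x^\star}^2/T$, matching the stated rate. The sample-complexity bound then follows by forcing each of these two terms below $\eps/2$, namely $T\geq 16\NRM{x^0-x^\star}^2\sigma^2\eps^{-2}$ and $T\geq 4L\NRM{x^0-x^\star}^2\eps^{-1}$, and summing the two sufficient thresholds to obtain $T_\eps\leq 4\NRM{x^0-x^\star}^2(4\sigma^2\eps^{-2}+L\eps^{-1})$.

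Since every step is a textbook manipulation, I do not expect a genuine obstacle. The only points requiring mild care are the bias--variance split in the quadratic term (making sure the cross term vanishes under the conditional expectation, which is where the \emph{i.i.d.}\ oracle assumption enters) and the stepsize-selection bookkeeping, where the $\min$ should be handled in both regimes simultaneously through the $\min^{-1}\leq$ sum-of-inverses trick rather than a case split. For the subdifferential-only variant I would simply replace $\nabla f(x^t)$ by a selected subgradient in $\partial f(x^t)$ throughout, as the convexity inequality and the unbiasedness hypothesis are unaffected.
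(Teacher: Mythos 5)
Your proof is correct and follows essentially the same route as the paper's: the one-step expansion of $\NRM{x^{t+1}-x^\star}^2$, the bias--variance split of $\esp{\NRM{g_t(x^t)}^2}$, the convexity and $L$-smoothness inequalities yielding the $-2\eta(1-L\eta)f_t$ coefficient, and the telescoping sum followed by Jensen. The stepsize tuning and the sample-complexity bookkeeping also match the stated constants, so there is nothing to add.
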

\begin{proof}
	Denote $e_t=\esp{\NRM{x^t-x^\star}^2}$. We have:
	\begin{align*}
		e_{t+1}-e_t&\leq -2\eta\langle \nabla f(x^t),x^t-x^\star\rangle + \eta^2\esp{\NRM{g_t^2}}\\
		&\leq -2\eta f_t +\eta^2(\esp{\NRM{\nabla f(x^t)}^2}+\sigma^2)\\
		&\leq -2\eta(1-L\eta)f_t +\eta^2\sigma^2\\
		&\leq -\eta f_t +\eta^2\sigma^2\,,
	\end{align*}
	where we used convexity and smoothness assumptions. Then, summing:
	\begin{equation*}
		\sum_{t<T}f_t \leq \frac{e_0-e_T}{\eta} + T\eta\sigma^2\,,
	\end{equation*}
	leading to the desired result.
\end{proof}

\subsection{SGD under convex assumptions}

\begin{lemma}[SGD, convex]\label{lem:sgd} Let $f:\cX\to\R$ convex. Let $(f_t,g_t)_{t\geq0}$ be first order oracle calls such that for all $t\geq0$:
	\begin{equation*}\forall x\in\cX\,,\quad
		\left\{\begin{aligned}
			& \esp{f_t(x)}=f(x)\,,\\
			&\esp{g_t(x)}\in\partial f(x)\,,\\
			&\esp{\NRM{g_t(x)}^2}\leq B^2\,,
		\end{aligned}\right.
	\end{equation*}
	for some $B>0$. For $p_\cX$ the projection on the convex set $\cX$, let $(x_t)_{t\geq0}$ be generated with:
	\begin{equation*}
		\forall t\geq0\,,\quad x^{t+1}=p_\cX\left(x^t-\eta g_t(x^t)\right)\,,
	\end{equation*}
	for a fixed stepsize $\eta>0$. Assume that $f$ is minimized over $\cX$ at some interior point $x^\star$. We have for any $T>0$ and for $\eta=\sqrt{\frac{2D^2}{B^2T}}$ for $D^2\geq \NRM{x^0-x^\star}^2$:
	\begin{equation*}
		f\left(\frac{1}{T}\sum_{t<T}x^t\right)-f(x^\star)\leq \sqrt{\frac{2}{T}}BD\,.
	\end{equation*}
	Thus, for fixed target precision $\eps>0$, setting $T_\eps=\lceil 2\eps^{-2}B^2D^2\rceil$ and using stepsize $\eta_\eps=\frac{\sqrt{2}\eps}{B^2}$, we have:
	\begin{equation*}
		f\left(\frac{1}{T_\eps}\sum_{t<T_\eps}x^t\right)-f(x^\star)\leq \eps\,.
	\end{equation*}
\end{lemma}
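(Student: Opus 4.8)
The plan is to run the standard one-step contraction analysis for projected stochastic subgradient descent, tracking the expected squared distance to the optimum $e_t:=\esp{\NRM{x^t-x^\star}^2}$. First I would use that $x^\star\in\cX$ and that the projection $p_\cX$ is non-expansive ($1$-Lipschitz) to remove the projection:
\begin{equation*}
\NRM{x^{t+1}-x^\star}^2=\NRM{p_\cX(x^t-\eta g_t(x^t))-p_\cX(x^\star)}^2\leq \NRM{x^t-\eta g_t(x^t)-x^\star}^2\,.
\end{equation*}
Expanding the right-hand side yields $\NRM{x^t-x^\star}^2-2\eta\langle g_t(x^t),x^t-x^\star\rangle+\eta^2\NRM{g_t(x^t)}^2$.

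Next I would take expectations conditionally on the history up to time $t$, so that $x^t$ is fixed. By unbiasedness, $\esp{g_t(x^t)\mid x^t}\in\partial f(x^t)$, and by the subgradient inequality for the convex function $f$ any $s\in\partial f(x^t)$ satisfies $\langle s,x^t-x^\star\rangle\geq f(x^t)-f(x^\star)$; combining this with the second-moment bound $\esp{\NRM{g_t(x^t)}^2}\leq B^2$ and taking total expectation gives the descent inequality
\begin{equation*}
e_{t+1}\leq e_t-2\eta\,\esp{f(x^t)-f(x^\star)}+\eta^2B^2\,.
\end{equation*}

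Rearranging, summing over $t=0,\dots,T-1$ (the $e_t$ terms telescope), dropping $e_T\geq0$ and dividing by $T$ gives
\begin{equation*}
\frac{1}{T}\sum_{t<T}\esp{f(x^t)-f(x^\star)}\leq \frac{e_0}{2\eta T}+\frac{\eta B^2}{2}\leq \frac{D^2}{2\eta T}+\frac{\eta B^2}{2}\,,
\end{equation*}
using $e_0\leq D^2$. Jensen's inequality applied to the convex $f$ moves the average inside, so $\esp{f(\bar{x}^T)-f(x^\star)}$ with $\bar{x}^T=\frac1T\sum_{t<T}x^t$ is bounded by the same right-hand side. Plugging in $\eta=\sqrt{2D^2/(B^2T)}$ balances the two terms and yields the stated $\sqrt{2/T}\,BD$ bound; the sample-complexity claim then follows by solving $\sqrt{2/T_\eps}\,BD\leq\eps$, i.e. requiring $T_\eps\geq 2\eps^{-2}B^2D^2$.

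There is no genuinely hard step here, as this is the textbook stochastic subgradient analysis. The only points requiring care are (i) correctly conditioning on the filtration so that the cross term $\langle g_t(x^t),x^t-x^\star\rangle$ reduces in expectation to an exact subgradient of $f$ evaluated at $x^t$, and (ii) invoking the subgradient inequality rather than a gradient identity, which is precisely what allows the argument to cover the non-differentiable convex case verbatim.
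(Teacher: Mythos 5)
Your proposal is correct and follows essentially the same route as the paper's proof: non-expansiveness of the projection, expansion of the squared distance, the subgradient inequality after conditioning, telescoping, and Jensen's inequality, with the same step-size choice. (The only cosmetic difference is that you keep the tighter $\frac{e_0}{2\eta T}$ term where the paper loosens it to $\frac{e_0}{\eta T}$; both yield the claimed $\sqrt{2/T}\,BD$ bound.)
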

\begin{proof}
For any $y\in\cX$, using properties of $p_\cX$:
\begin{align*}
	\NRM{x^{t+1}-y}^2&=\NRM{p_\cX\left(x^t-\eta g_t(x^t)\right)-y}^2\\
	&\leq \NRM{x^t-\eta g_t(x^t)-y}^2\\
	&=\NRM{x^t-y}^2 + \eta^2\NRM{g_t(x^t)^2} -2 \eta\langle g_t(x^t),x^t-y\rangle\,.
\end{align*}
Taking the mean and $y=x^\star$, with $e_t=\esp{\NRM{x^t-x^\star}^2}$ and $f_t=\esp{f(x^t)-f(x^\star)}$:
\begin{align*}
	e_{t+1}&\leq e_t +\eta^2B^2 -2\eta \langle \nabla f(x^t),x^t-y\rangle\\
	&\leq e_t +\eta^2B^2 -2\eta f_t \,,
\end{align*}
where we used convexity of $f$. Thus, $f_t\leq \frac{e_t-e_{t+1}}{2\eta} + \frac{\eta B^2}{2}$, and by summing this inequality for $0\leq t<T$:
\begin{equation*}
	\sum_{t<T}f_t \leq \frac{e_0-e_T}{2\eta} + \frac{\eta T B^2}{2} \leq \frac{e_0}{\eta} + \frac{\eta T B^2}{2}\,.
\end{equation*}
Thus, for $\eta=\sqrt{\frac{2e_0}{B^2T}}$, we have the result using convexity of $f$.
\end{proof}
\end{document}